\documentclass[oneside]{amsart}
\usepackage[utf8]{inputenc}
\usepackage{amsmath,amsthm,amssymb}
\usepackage{mathrsfs} 
\usepackage{graphicx, xcolor}
\usepackage{caption} 
\captionsetup{justification   = raggedright,
              singlelinecheck = false,
              margin=0.5cm} 
\usepackage{subfiles}
\usepackage{newverbs} 
\usepackage{array} 
\usepackage{tikz}
\usepackage{braket}
\usepackage{enumerate}

\usepackage{pgfplots}
\usepgfplotslibrary{fillbetween}
\pgfplotsset{width=7cm,compat=1.9}

\usetikzlibrary{decorations.markings,arrows.meta,patterns}
\tikzset
  {midarrow/.style={decoration={markings,mark=at position 0.5 with
     {\arrow[xshift=2pt]{Latex[length=4pt,#1]}}},postaction={decorate}}
  }

\usepackage[colorlinks,citecolor=blue,linkcolor=red!70!black]{hyperref}
\usepackage[nameinlink]{cleveref}

\usepackage{comment}

\usepackage[marginpar=5cm,left=5cm,right=5cm]{geometry}


\usepackage{amssymb} 
\usepackage{graphicx} 

\newcommand{\asterisk}[2]{\begin{tikzpicture}
    \node[circle,fill=red,scale=0.3] at (360:0mm) (center) {};
    \foreach \n in {1,...,#1}{
        \node[circle,fill=blue,scale=0.3] at ({\n*360/#1}:#2cm) (n\n) {};
        \draw (center)--(n\n);
    }
    \node at (0,-#2*1.5) {$\ast_{#1}$}; 
\end{tikzpicture}}

\newverbcommand{\CMRverb}{\tiny\color{blue}}{}
\newverbcommand{\GRverb}{\tiny\color{teal}}{}
\newverbcommand{\STverb}{\tiny\color{cyan}}{}
\newverbcommand{\Pverb}{\tiny\color{violet}}{}
\newverbcommand{\Averb}{\tiny\color{brown}}{}

\theoremstyle{plain}
\newtheorem{THM}{Theorem}[section]
\newtheorem{THME}{Theorem}[section]
\newtheorem{PROP}[THM]{Proposition}
\newtheorem{LEM}[THM]{Lemma}

\newtheorem{FACT}[THM]{Fact}
\newtheorem{CLAIM}{Claim}

\theoremstyle{definition}
\newtheorem{DEF}[THM]{Definition}
\newtheorem{RMK}[THM]{Remark}
\newtheorem{EX}[THM]{Example}


\DeclareMathOperator{\id}{id}

\DeclareMathOperator{\Var}{Var}
\DeclareMathOperator{\Out}{Out}
\DeclareMathOperator{\Aut}{Aut}
\DeclareMathOperator{\Inn}{Inn}

\newcommand{\eg}{\textit{e.g. }}

\newcommand{\dfn}{\mathrel{\mathop:}=}

\renewcommand{\>}{\rangle}
\newcommand{\C}{\mathbb{C}}

\newcommand{\F}{\mathbb{F}}
\newcommand{\G}{\Gamma}
\newcommand{\cO}{\mathcal{O}}
\renewcommand{\P}{\mathbb{P}}
\newcommand{\Q}{\mathbb{Q}}
\newcommand{\R}{\mathbb{R}}

\newcommand{\Z}{\mathbb{Z}}

\renewcommand{\bar}{\overline}
\renewcommand{\|}{|\!|} 

\title{Thurston's Theorem: Entropy in Dimension One}
\author[Dickmann, Domat, Hill, Kwak, Ospina, Patel, Rechkin]{Ryan Dickmann, George Domat, Thomas Hill,\\ Sanghoon Kwak, Carlos Ospina, Priyam Patel, Rebecca Rechkin}

\begin{document}
\maketitle

\begin{abstract}
    In his paper \cite{Thurston2014Entropy}, Thurston shows that a positive real number $h$ is the topological entropy for an ergodic traintrack representative of an outer automorphism of a free group if and only if its expansion constant $\lambda = e^h$ is a weak Perron number. This is a powerful result, answering a question analogous to one regarding surfaces and stretch factors of pseudo-Anosov homeomorphisms. However, much of the machinery used to prove this seminal theorem on traintrack maps is contained in the part of Thurston's paper on the entropy of postcritically finite interval maps and the proof is difficult to parse. In this expository paper, we modernize Thurston's approach, fill in gaps in the original paper, and distill Thurston's methods to give a cohesive proof of the traintrack theorem. Of particular note is the addition of a proof of ergodicity of the traintrack representatives, which was missing in Thurston's paper. 
\end{abstract}

\section{Introduction}

Topological entropy describes the complexity of the orbit structure of a dynamical system, and is an invariant of topological conjugacy classes. A classical problem in dynamics is to characterize the numbers that can arise as the topological entropy for a particular family of dynamical systems. In his paper \cite{Thurston2014Entropy}, Thurston proved that a positive real number $h$ is the topological entropy of a postcritically finite self-map of the unit interval if and only if $\lambda = e^h$ is a weak Perron number, i.e., $\lambda$ is an algebraic integer that is at least as large as the absolute value of any conjugate of $\lambda$. He uses the tools developed for studying interval maps to then prove the following theorem about outer automorphisms of free groups, answering a prominent question in geometric group theory.

\begin{THM}[{\cite[Theorem 1.9]{Thurston2014Entropy}}]
    \label{thm:thurston_main}
    A positive real number $h$ is the topological entropy for an ergodic traintrack representative of an outer automorphism of a free group if and only if $\lambda = e^{h}$ is an algebraic integer that is at least as large as the absolute value of any conjugate of $\lambda$, i.e. $\lambda$ is a weak Perron number.
    \end{THM}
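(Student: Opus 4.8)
The statement is an ``if and only if,'' and I would treat the two implications by entirely different methods: the forward direction (every such entropy gives a weak Perron number) is a short Perron--Frobenius argument, whereas the converse (every weak Perron number is realized) is the substantive part, and is where I would lean on the interval-map theorem recalled in the introduction.

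For necessity, suppose $f\colon G\to G$ is an ergodic traintrack representative of some $\phi\in\Out(F_n)$ with topological entropy $h>0$, and let $M=M_f$ be its transition matrix on (unoriented) edges, a non-negative integer matrix; ergodicity here means $M$ is irreducible. Since $f$ is a traintrack map, each $f^k$ is an immersion on every edge, so the combinatorial length of $f^k(e)$ is exactly the corresponding column-sum of $M^k$, and it is then standard that $h=\log\lambda$ with $\lambda=\rho(M)$ the spectral radius. By Perron--Frobenius, $\lambda$ is a positive real eigenvalue of $M$ with $\lambda\ge|\mu|$ for every eigenvalue $\mu$ of $M$. Because $M$ has integer entries, its characteristic polynomial $\chi_M$ is monic over $\Z$, so $\lambda$ is an algebraic integer and its minimal polynomial divides $\chi_M$; hence every Galois conjugate of $\lambda$ is itself an eigenvalue of $M$, and is therefore bounded in absolute value by $\lambda$. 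Thus $\lambda=e^{h}$ is weak Perron.

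For sufficiency, let $\lambda$ be weak Perron with $h=\log\lambda>0$. First I would invoke Thurston's interval-map theorem to obtain a postcritically finite $g\colon[0,1]\to[0,1]$ of entropy $h$; subdividing at the postcritical set makes $g$ Markov, with a non-negative integer transition matrix $M$ satisfying $\rho(M)=\lambda$, and (passing, if necessary, to the dominant transitive part, or arranging $g$ to be mixing from the start) I may take $M$ irreducible. The next step is to encode this data as a \emph{graph} self-map: build a graph $G$ with one edge per Markov interval together with enough extra loops that $\pi_1(G)$ is free and nontrivial, and define $f\colon G\to G$ by reading off the monotone branches of $g$ (with their orientations) on the interval edges, choosing the images of the loops so that $f_{\#}$ is an isomorphism of $\pi_1(G)$, so that $f$ represents an element of $\Out(\pi_1(G))=\Out(F_n)$. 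This $f$ need not be a traintrack map, since backtracking can develop in $f^k$ when an iterated edge-image runs over a vertex at which $g$ has a turning point; so I would then run the standard folding and tightening moves (as in the Bestvina--Handel algorithm) to replace $f$ by a traintrack representative $f'$ of the same outer automorphism, verifying that these moves leave the expansion factor, hence the entropy, equal to $\lambda$. Finally, tracking irreducibility of the dominant stratum through the folds should give that $f'$ is ergodic.

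The main obstacle is the whole of the sufficiency argument, and within it the delicate point is building, from the purely combinatorial data of a postcritically finite interval map, a graph map that is \emph{simultaneously} a homotopy equivalence (so that it represents an automorphism of a free group at all) and, after tightening, an honest traintrack map, all without losing any entropy: the interval is contractible, so the free-group structure has to be grafted on by hand and then shown to survive the tightening. The hardest individual step, and the gap this paper is explicitly filling, is guaranteeing \emph{ergodicity} of the final traintrack map, i.e.\ irreducibility of its transition matrix; I expect this to require either arranging the interval map to be mixing at the outset and proving mixing is inherited, or modifying the traintrack map afterwards to make its transition matrix irreducible while keeping its expansion factor equal to $\lambda$.
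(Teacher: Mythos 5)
Your forward direction matches the paper's argument (transition matrix, Perron--Frobenius, spectral radius gives entropy), and is fine. The converse, however, is not the paper's route, and as written it has gaps that are not cosmetic.

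The paper deliberately \emph{avoids} passing through interval maps. Instead, given a Perron $\lambda$, it constructs from scratch a uniformly $\lambda$-expanding self-map $f_\lambda$ of a star graph $\ast_n$ (\Cref{thm:astmapPerron}), using the geometry of Perron numbers: an invariant cone $K_\lambda$ under companion-matrix multiplication, a rational subcone, Gordan's Lemma to get a finitely generated semigroup of lattice points, and the Even Lemma to control parities. It then ``splits'' $\ast_n$ by replacing each edge with a fixed seven-edge prototype graph $P_7$ carrying a traintrack structure, and lifts $f_\lambda$ to $S(f_\lambda)$ using the prototype maps $\phi_{3+2m}$. By construction $S(f_\lambda)$ is uniformly $\lambda$-expanding, is a traintrack map (legal turns go to legal turns), and is a homotopy equivalence --- the last two verified directly, via the traintrack structure on $P_7$ and via Stallings folds, not by any appeal to the Bestvina--Handel algorithm. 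Mixing/ergodicity is then proved by explicitly tracking how iterates of $S(f_\lambda)$ spread over edges. Weak Perron is reduced to Perron via $\lambda^N$ and a cyclic gluing of star copies.

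Two concrete gaps in your proposal. First, the passage from an interval Markov map to a graph homotopy equivalence is not merely a matter of ``adding enough extra loops and choosing their images so that $f_\#$ is an isomorphism.'' The interval is contractible, so none of the Markov data constrains the loop images; you have to design both the graph and the map so that the result is \emph{simultaneously} a homotopy equivalence \emph{and} (after controlling turns) a traintrack map with expansion exactly $\lambda$ --- that is precisely the role of the prototype graph $P_7$ and its maps $\phi_{3+2m}$, and it is the bulk of the construction, not a routine step. Second, invoking the Bestvina--Handel algorithm to ``tighten'' to a traintrack representative does not preserve entropy: the traintrack representative produced realizes the \emph{minimal} expansion factor over all topological representatives of the outer automorphism, which could be strictly less than $\lambda$. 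You would need an argument that your constructed outer automorphism has minimal growth equal to $\lambda$ (for instance, via irreducibility on the appropriate stratum), and nothing in your sketch supplies this. The paper sidesteps both problems by never invoking Bestvina--Handel at all: $S(f_\lambda)$ is shown to already be a traintrack map with the right expansion. Your final point about ergodicity is, as you note, the hardest piece; the paper proves it by a careful five-step bookkeeping argument showing that a fixed power of $S(f_\lambda)$ covers every edge.
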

    
    Unfortunately, Thurston fell ill while writing a draft of the manuscript \cite{Thurston2014Entropy} and there are some gaps in the final version of the paper. In this expository article, we modernize Thurston’s approach, fill in gaps in the original paper, and distill Thurston’s methods to give a cohesive proof of the traintrack theorem that is especially readible for geometric group theorists. Though the motivation for the proof of \Cref{thm:thurston_main} comes from the dynamics of postcritically finite interval maps, we exclude details about such maps below since the purpose of this paper is to give a complete and concise proof of Thurston’s traintrack theorem.

To understand the content of \Cref{thm:thurston_main}, recall that for a finitely generated free group $\mathbb{F}_n$,
the \textbf{outer automorphism group} of $\mathbb{F}_n$ is $\Out(\mathbb{F}_n) = \Aut(\mathbb{F}_n)/ \Inn(\mathbb{F}_n)$. Bestvina and Handel \cite[Theorem 1.7]{Bestvina1992train} showed that certain outer automorphism of a free group can be represented by a special map between graphs called a \textbf{traintrack map} (see \cite[Chapter 6.3]{OHGGT}).  Roughly, a traintrack map is a continuous graph map which has particularly nice cancellation properties with respect to iterations. A traintrack map $f:\Gamma \to \Gamma$ is called \emph{irreducible} if $f$ does not admit an invariant proper subgraph which is not a tree. We formally define and give all relevant background on traintrack maps in \Cref{ssec:BG_Traintrack_Structures}. The notion of irreducibility is compatible with that of ergodicity of the traintrack map. In particular, the map being ergodic as a dynamical system essentially means the system cannot be reduced or factored into smaller components, which in this case would be proper subgraphs. Thus, ergodicity of the traintrack map implies irreduciblility.

Since their introduction, traintrack maps have become a standard tool for understanding the geometry and dynamics of automorphisms of free groups. It is also easy to calculate the expansion constant of a traintrack map since traintrack maps eliminate backtracking when iteratively applying the map to an edge or edge path. In fact, the expansion constant of the traintrack map $f$ can be calculated by finding the Perron-Frobenius eigenvalue, $\lambda$, of the transition matrix for $f$. The topological entropy of $f$ is then exaclty $\log(\lambda)$. See \Cref{ssec:BG_Expansion_Constants}, \Cref{ssec:BG_PF_Theorem}, and \cite[Remark 1.8]{Bestvina1992train} for more details.

Thus, one direction of \Cref{thm:thurston_main} follows almost immediately from \Cref{thm:PF_Theorem} (the Perron-Frobenius Theorem). In particular, an ergodic traintrack representative of an outer automorphism of a free group has a transition matrix with a positive leading eigenvalue $\lambda$ that is not smaller than the magnitude of the other eigenvalues (see the last paragraph of \Cref{ssec:BG_PF_Theorem}). Therefore, the entropy of the traintrack map is $h(f) = \log(\lambda)$ and $e^h = \lambda$ is a weak Perron number as desired. Proving the other direction is far more difficult, but we provide a sketch of Thurston's (and our) argument here. 

Fix a Perron number $\lambda$, i.e. an algebraic integer that is \emph{strictly} larger than the absolute value of its conjugates. Thurston uses two main ingredients to construct a traintrack map with growth rate $\lambda$. First, he defines a collection of prototype traintrack maps $\phi_n$ for all odd, positive integers $n$. Second, he defines a star map on a star graph that is \emph{uniformly $\lambda$-expanding}, called $f_\lambda$ below. (For the definition of star graphs and star maps, see \Cref{ssec:AST_Definition}; these are called \emph{asterisk} maps on \emph{asterisk} graphs in \cite{Thurston2014Entropy}.) This is a delicate process that requires an understanding of the arithmetic of Perron numbers. In fact, there is a somewhat serious number theoretic error in the version of \Cref{lem:even} (The Even Lemma) that appears in Thurston's paper \cite[p.359, proof of Theorem 6.2]{Thurston2014Entropy}.
We correct this error and correct the construction
of the star maps accordingly (see \Cref{rmk:n0error} and \Cref{ssec:AST_Uniform_Expanding} respectively).
It was noted by the referee that Richard Webb and his reading group at Cambridge were also aware of the error and how to fix the argument.

Next, given the star map $f_\lambda : \Gamma \to \Gamma$, Thurston defines the split graph $S(\Gamma)$ and the split map $S(f_\lambda): S(\Gamma) \to S(\Gamma)$, which are defined using \emph{both} the prototype traintrack maps $\phi_n$ and the $\lambda$-expanding star map $f_\lambda$.
The desired traintrack map for \Cref{thm:thurston_main} is $S(f_\lambda)$, and the definition of split maps ensures that the expansion constant of $S(f_\lambda)$ is $\lambda$ because the expansion constant of $f_\lambda$ is $\lambda$. It then remains to show that the traintrack map actually represents an outer automorphism of a free group, and that the map $S(f_\lambda)$ is in fact ergodic. Unfortunately, the details on these two points are sparse in Thurston's paper and we remedy this as described in the next paragraph. Finally, to extend the results to \emph{weak} Perron numbers, Thurston uses the fact that for a weak Perron number $\lambda$, there exists $N \in \mathbb{Z}^+$ such that $\lambda^N$ is Perron. 

The main contributions of this expository paper are as follows. First and foremost, we distill and streamline all of the pieces needed for Thurston's traintrack theorem from \cite{Thurston2014Entropy}, which contains a variety of additional theorems regarding interval maps and other topics. Second, we correct the number theoretic error in the Even Lemma mentioned above. Third, we add a proof of the ergodicity of the traintrack maps, which was completely missing in Thurston's paper. Finally, we use Stallings folds to thoroughly prove that the traintrack maps represent elements of $\Out(\mathbb{F}_n)$. For this last piece, Thurston outlines an algebraic argument, but provides only a few details. We found the argument using Stallings folds more straightforward and rigorous. 

We conclude with some remarks. First, the notion of traintrack maps for $\Out(\mathbb{F}_n)$ is motivated by Thurston's traintracks on surfaces. In particular, elements of $\Out(\mathbb{F}_n)$ that admit traintrack representatives are the analog of pseudo-Anosov homeomorphisms of surfaces, and expansion constants for these maps are the analog of stretch factors for pseudo-Anosovs. Despite the fact that traintrack theory for free groups is more complicated than for surfaces, Thurston gave a complete answer for which algebraic integers can arise as expansion constants for traintrack representatives of outer automorphisms. Thurston also showed that, for surfaces, every stretch factor of a pseudo-Anosov homeomorphism is an algebraic unit, but it is still unknown exactly which units can appear as stretch factors. There has been partial progress in this direction. For example, 
Fried \cite[Theorem 1]{Fried1985Growth} proved every stretch factor $\lambda$ of a pseudo-Anosov homeomorphism is a bi-Perron unit. 
Fried further conjectured \cite[Problem 2]{Fried1985Growth} that every bi-Perron unit has some power such that it is realized as a stretch factor of a pseudo-Anosov homeomorphism. The work of Pankau \cite{Pankau2020Salem} and Lichiti--Pankau \cite{Liechti2021Geometry} made some progress towards the conjecture, but the question is still far from resolved. 

In the surface case, the stretch factors for a pseudo-Anosov homeomorphism and its inverse are always equal. This is no longer the case for outer automorphisms of $\F_{n}$. See \cite[Remark p.9]{Bestvina1992train} for an example of this. In \cite{Thurston2014Entropy}, Thurston asks the question: Which pairs of numbers can appear as the expansion constants of an outer automorphism and its inverse? He gives a partial answer when one restricts to the class of \emph{bipositive} outer automorphisms, but he notes that his result cannot generalize to all outer automorphisms. He also conjectures that every pair of weak Perron numbers greater than 1 is a pair of expansion constants for an outer automorphism and its inverse. We are not aware of a proof of this claim in the literature as of this moment, but a proof of this claim would be a good first step towards answering Thurston's question. For the sake of brevity we do not discuss this portion of Thurston's paper and direct interested readers to \cite[Section 11 \& 12]{Thurston2014Entropy}.

\subsection{Outline of paper} In \Cref{sec:Background} we provide all revelent background on entropy, $\lambda$-expanding graph maps, traintrack structures, Stallings folds, and Perron numbers, in that order. In \Cref{sec:Asterisk_Maps}, we define star maps and use the geometry of Perron numbers to construct uniformly $\lambda$-expanding star maps for all Perron and weak Perron numbers $\lambda$. In \Cref{sec:Prototypes}, we define Thurston's prototype traintrack maps $\phi_n$ and prove that they are indeed homotopy equivalences so that they represent elements of $\Out(\mathbb{F}_n)$. Then, in \Cref{sec:Splitting}, we define split graphs, split maps, and prove that our split star maps are traintrack representatives of elements of $\Out(\mathbb{F}_n)$. Finally, in \Cref{sec:Conclusion}, we prove that the split star maps are ergodic and finish the proof of \Cref{thm:thurston_main}.

\section*{Acknowledgements}
The authors would like to thank Jack Cook,  Sean Howe, Michael Kopreski, and Kurt Vinhage for many helpful discussions and thank Rylee Lyman for helpful comments on the earlier draft of this paper. We would also like to thank Mladen Bestvina for his suggestion that we undertake this project during a yearlong seminar supported by the NSF RTG Grant DMS–1840190. We also thank the referee for numerous insightful comments and suggestions.

The authors acknowledge support from NSF grants RTG DMS--1745583 (Dickmann), DMS--1905720 (Domat, Kwak), RTG DMS--1840190 (Domat, Rechkin), CAREER DMS--2046889 (Kwak, Patel), CAREER DMS--1452762 (Ospina), and the University of Utah Faculty Fellows Award (Patel). 

\tableofcontents

\section{Background}\label{sec:Background}\vspace{1em}

Thurston uses a variety of tools from dynamics, geometric group theory, and algrebraic number theory throughout his proof of \Cref{thm:thurston_main}. All relevant background material on these three topics are given in this section. 
    \subsection{Entropy}\label{ssec:BG_Entropy}
        The value of the topological entropy describes the complexity of the orbit structure of a dynamical system, and is an invariant of topological conjugacy classes. 
Let $(X,d)$ be a compact metric space and let $f:X \to X$ be a continuous map.
For $n \geq 1$, set
\[
d_n(x,y) := \max_{0\leq j \leq n-1} d(f^j(x),f^j(y)).
\]
Denote by $B_\varepsilon^n(x)$ the open $\varepsilon$-ball $\{y \in X\ |\ d_n(x,y)<\varepsilon\}$ with respect to $d_n$.

Since $X$ is a compact space, for each $\varepsilon>0$, $X$ can be covered by a finite collection of open sets of the form $B_\varepsilon^n(x_i)$ for $x_i \in X$. Then, let $N(\varepsilon,n)$ be the minimum number of such open sets that cover $X$.
\begin{DEF}
Let $f:X \to X$ be a continuous map on a compact metric space $X$. The \textbf{topological entropy} $h(f)$ is 
\begin{equation*}
    h(f) = \lim_{\varepsilon \to 0} \limsup_{n \to \infty} \frac{1}{n} \log(N(\varepsilon,n)).
\end{equation*}
\end{DEF}

\subsection{Graph Maps and Entropy}\label{ssec:BG_Expansion_Constants}
Now we turn our attention to the category of \emph{graphs}.
A \textbf{graph} $\G$ is a 1-dimensional CW complex, whose 0-simplices are called the \textbf{vertices}, and whose 1-simplices are called the \textbf{edges}.
We will always assume our graphs have finitely many vertices and edges.
If a graph is given an orientation on each of the edges (a choice of left and right endpoints), then it is called an \textbf{directed graph}.

A \textbf{graph map} $f:\G_{1} \to \G_{2}$ between graphs is a continuous map that sends vertices to vertices and edges to edge paths. 

Endowing the edges of a graph with lengths produces a \textbf{metric graph}. With a metric, the length of any edge path can be measured as the sum of lengths of edges in the path. Namely, this induces a \textbf{length function} $\ell$ on the set of edge paths in the graph. A typical choice of such a metric is the \textbf{combinatorial metric}: the metric that gives every edge length 1.

For a given metric graph $\G$ with a length function $\ell$, 
we define the \textbf{total variation} of a graph map $f:\G \to \G$ as: 
\[
    \Var(f) =\sum_{e \in E(\G)}\ell(f(e)),
\] 
where $E(\G)$ denotes the set of edges of $\G$. 

Computing the topological entropy for a graph map is simple due to the 1-dimensionality of the graph, and we will use the following theorem to compute the entropy of graph maps throughout the paper.
\begin{THM}[{\cite{Aldeda2000Combinatorial}}]
    \label{thm:entropyOfGraphMaps}
    Let $f: \G \to \G$ be a graph map on a finite metric graph that has finitely many points at which $f$ is not a local homeomorphism. Then
    \[
        h(f) = \lim_{n \to \infty} \frac{1}{n} \log(\Var(f^n)).
    \]
\end{THM}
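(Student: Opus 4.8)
The plan is to establish the two inequalities $h(f) \le \lim_{n} \tfrac{1}{n}\log \Var(f^{n})$ and $h(f) \ge \lim_{n} \tfrac{1}{n}\log \Var(f^{n})$ separately, after first identifying the right-hand side with $\log\rho(M)$, where $M$ is the \emph{transition matrix} of $f$ (the nonnegative integer matrix whose $(e',e)$ entry is the number of times the edge path $f(e)$ crosses $e'$). Since any two metrics on $\Gamma$ assign to an edge path lengths that differ by a factor between the extreme edge-length ratios, $\Var(f^{n})$ has the same exponential growth rate in every metric as in the combinatorial one; and for the combinatorial metric $\ell(f^{n}(e))$ is exactly the number of edges (with multiplicity) in the edge path $f^{n}(e)$, so $\Var(f^{n}) = \mathbf{1}^{T} M^{n}\mathbf{1}$. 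By the Jordan form of $M$ together with \Cref{thm:PF_Theorem}, $\mathbf{1}^{T} M^{n}\mathbf{1}$ is comparable to $n^{p}\rho(M)^{n}$ for some nonnegative integer $p$; hence the limit in the statement exists and equals $\log\rho(M)$. We may assume $\rho(M) > 1$: when $\rho(M)\le 1$ the total variations grow subexponentially, and the conclusion $h(f)=0$ will follow from the upper bound alone since entropy is nonnegative.

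For the upper bound I would estimate $N(\varepsilon,n)$ geometrically. The orbit map $F_{n}\colon \Gamma \to \Gamma^{n}$, $F_{n}(x) = (x, f(x), \dots, f^{n-1}(x))$, is an isometric embedding of $(\Gamma, d_{n})$ when $\Gamma^{n}$ carries the supremum metric, so $N(\varepsilon,n)$ is the least number of $\varepsilon$-balls covering $F_{n}(\Gamma)$. Now $F_{n}(\Gamma)$ is the union of the $|E(\Gamma)|$ arcs $F_{n}(e)$, and the length of $F_{n}(e)$ in the supremum metric is at most $\sum_{j=0}^{n-1}\ell(f^{j}(e))$, so $F_{n}(\Gamma)$ has total length at most $\sum_{j=0}^{n-1}\Var(f^{j})$; covering each arc by $\varepsilon$-balls centered on it gives
\[
    N(\varepsilon,n) \;\le\; \frac{1}{\varepsilon}\sum_{j=0}^{n-1}\Var(f^{j}) \;+\; |E(\Gamma)|.
\]
Applying $\tfrac{1}{n}\log(\cdot)$, letting $n \to \infty$ (the right side is $O(\operatorname{poly}(n)\,\rho(M)^{n})$ by the previous paragraph), and then $\varepsilon \to 0$ gives $h(f) \le \log\rho(M)$.

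For the lower bound $h(f) \ge \log\rho(M)$, the idea is to produce, for a fixed $\delta$, $(n,\delta)$-separated subsets of $\Gamma$ of size growing like $\rho(M)^{n}$. First reduce, by replacing $f$ with a power (which scales both $h(f)$ and $\log\rho(M)$ by the same integer), to the case that the recurrent part of $M$ realizing the spectral radius is a single primitive block, and subdivide $\Gamma$ so that the transition matrix has entries in $\{0,1\}$. The clean route is coding: the \emph{address map} $\iota\colon \Sigma_{A}^{+} \to \Gamma$ from the one-sided subshift of finite type with adjacency matrix $A = M$, sending an admissible edge sequence $(e_{0},e_{1},\dots)$ to the point whose $i$-th iterate lies in $\overline{e_{i}}$ for all $i$, is — when $f$ is expanding, which is the only case \Cref{thm:thurston_main} requires since the star and split maps constructed later are uniformly $\lambda$-expanding — well defined and single valued (the depth-$n$ cylinders are nonempty, pulled back one step at a time using that $A$ has $\{0,1\}$ entries, and shrink to points by expansion), continuous, surjective, and finite-to-one; hence $h(f) = h(\sigma|_{\Sigma_{A}^{+}}) = \log\rho(A) = \log\rho(M)$, the last two equalities being the standard entropy computation for a subshift of finite type and the fact that a finite-to-one factor map preserves entropy.

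The main obstacle lies entirely in this lower bound, and precisely in its dependence on expansion. For a general graph map the cylinders need neither be nonempty nor shrink, so one cannot define $\iota$; instead one must run the Misiurewicz--Szlenk lap-counting argument, bounding the growth of the lap number $L_{n}$ of $f^{n}$ (the number of maximal subintervals on which $f^{n}$ is injective), observing $L_{n} \ge \Var(f^{n})/\operatorname{diam}(\Gamma)$, and extracting $\delta$-separated orbit segments from the laps even though these may have length $\sim \rho(M)^{-n}$ — the resolution being to work inside a strongly connected primitive block, where the first-return structure yields a genuinely embedded full shift after a bounded recoding, so that separation costs only a bounded number of extra iterates. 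I would isolate the expanding case above as a self-contained lemma, since that is all that the rest of the paper uses, and cite \cite{Aldeda2000Combinatorial} for the statement in full generality.
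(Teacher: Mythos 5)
The paper does not prove this statement; it is quoted directly from \cite{Aldeda2000Combinatorial}, so there is no internal proof to compare yours against. Evaluating your sketch on its own terms: the reduction to $\log\rho(M)$ and the upper bound are correct. The combinatorial-metric computation $\Var(f^n)=\mathbf{1}^T M^n\mathbf{1}$ is right (note $\Var$ counts unreduced traversals, which is exactly why the transition matrix is multiplicative even without tautness), the observation that the growth rate is metric-independent is right, and the orbit-map covering argument giving $N(\varepsilon,n)\le \tfrac{1}{\varepsilon}\sum_{j<n}\Var(f^j)+|E(\Gamma)|$ is clean. One point worth making explicit: the hypothesis that $f$ fails to be a local homeomorphism at only finitely many points rules out collapsed edges, hence $M$ has no zero column, hence $\Var(f^n)\ge 1$ for all $n$ and the limit cannot degenerate to $-\infty$. (Also, in the lap-number aside, an injective arc in a graph can be longer than the diameter, so you want $L_n\ge \Var(f^n)/\ell(\Gamma)$ rather than $/\operatorname{diam}(\Gamma)$; minor.)

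The gap is the one you flag yourself: your lower bound is established only under an expansion hypothesis, via the address map $\iota\colon\Sigma_A^+\to\Gamma$ and Bowen's finite-to-one factor theorem. That argument is sound where it applies, but the theorem as stated assumes no expansion, and the Misiurewicz--Szlenk route you gesture at for the general case is precisely where the real work lies --- the passage from ``cylinders may collapse'' to ``a genuinely embedded full shift after a bounded recoding'' is the content of the theorem, not a remark. So as written this is not a proof of the stated result; it is a proof of the special case the paper actually needs plus a citation for the rest. That is a defensible expository choice (and matches what the paper itself does, namely cite the result), but you should present it as such rather than as a proof of the theorem in full generality.
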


\subsection{Perron-Frobenius Theorem}\label{ssec:BG_PF_Theorem}

Thurston's argument uses the following form of the well-known Perron-Frobenius theorem. We say a matrix is nonnegative (or positive) when its entries are nonnegative (or positive, respectively). A nonnegative matrix is said to be \textbf{ergodic} if the sum of a finite number of its consecutive positive powers is a positive matrix.
A nonnegative matrix is said to be \textbf{mixing} if some power is a positive matrix. 
These two notions describe that the dynamics on a space will not decompose into subspaces that have independent dynamics. Mixing is stronger than ergodicity; a mixing map exhibits more uniform orbit dynamics. We note that Thurston uses the definition of ergodicity above. 
However, in the literature, this definition of ergodicity is commonly referred to as \emph{irreducible}. What we defined as mixing is also commonly referred to as \emph{primitive}, see for instance \cite{LindMarcus2021}, definitions 4.2.2 and 4.5.7.

\begin{THM}[Perron-Frobenius] \label{thm:PF_Theorem}
Let $M$ be an $n \times n$ matrix with integer entries. If $M$ is nonnegative, then $M$ has at least one eigenvector such that

\begin{enumerate}[(i)]
    \item The corresponding eigenvalue $\lambda$ is nonnegative.
    \item $\lambda \geq | \lambda_i|$ for all the other eigenvalues $\lambda_i$.
\end{enumerate}

Furthermore, if $M$ is ergodic then there is a unique eigenvector whose corresponding eigenvalue $\lambda$ is strictly positive.
\end{THM}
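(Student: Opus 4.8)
The plan is to prove (i) and (ii) first for \emph{strictly positive} matrices, then bootstrap to an arbitrary nonnegative matrix by a perturbation argument, and finally treat the ergodic case using the strict positivity of a suitable sum of powers. The hypothesis that the entries are integers plays no role, so I would work with arbitrary nonnegative real matrices, writing $\rho(M) := \max_i |\lambda_i|$ for the spectral radius. For a strictly positive matrix $A$, the normalization map $x \mapsto Ax/\sum_i (Ax)_i$ is a continuous self-map of the compact convex simplex $\Delta = \{x \ge 0 : \sum_i x_i = 1\}$ (the denominator never vanishes since $Ax$ has all-positive entries whenever $x \in \Delta$), so Brouwer's fixed point theorem yields $v \in \Delta$ with $Av = \lambda v$ where $\lambda = \sum_i (Av)_i > 0$, and then $v = Av/\lambda$ is strictly positive. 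Applying the same to $A^{\top}$ gives a strictly positive left eigenvector $u^{\top}A = \lambda' u^{\top}$, and evaluating $u^{\top}Av$ two ways forces $\lambda' = \lambda$. For dominance, if $Aw = \mu w$ with $w \ne 0$ then entrywise $|\mu|\,|w| = |Aw| \le A|w|$ by the triangle inequality, so left-multiplying by $u^{\top}$ gives $|\mu|\,(u^{\top}|w|) \le u^{\top}A|w| = \lambda\,(u^{\top}|w|)$; since $u^{\top}|w| > 0$ we get $|\mu| \le \lambda$. Thus $\lambda = \rho(A)$ and it has a positive eigenvector, establishing (i) and (ii) in this case.

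For a general nonnegative $M$, set $M_k = M + \tfrac1k J$ where $J$ is the all-ones matrix; each $M_k$ is strictly positive, with Perron data $\lambda_k > 0$, positive right eigenvector $v_k \in \Delta$, and positive left eigenvector $u_k$. The $\lambda_k$ are uniformly bounded (each is at most the largest row sum of $M_1$), so along a subsequence $v_k \to v \in \Delta$ and $\lambda_k \to \lambda \ge 0$; passing to the limit in $M_k v_k = \lambda_k v_k$ gives $Mv = \lambda v$ with $v \ge 0$ and $v \ne 0$, which is (i). For (ii), given an eigenvalue $\mu$ of $M$ with eigenvector $w$, entrywise $|\mu|\,|w| = |Mw| \le M|w| \le M_k|w|$, hence $|\mu|\,(u_k^{\top}|w|) \le u_k^{\top}M_k|w| = \lambda_k\,(u_k^{\top}|w|)$, so $|\mu| \le \lambda_k$ for every $k$ in the subsequence; letting $k \to \infty$ gives $|\mu| \le \lambda$, and in particular $\lambda = \rho(M)$. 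This route has the bonus of not requiring a separate proof that the spectral radius is monotone under the perturbation.

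Finally, assume $M$ is ergodic, so $N := M^{a} + M^{a+1} + \cdots + M^{b}$ is strictly positive for some $1 \le a \le b$ (the case $n = 1$ is trivial). Taking the eigenvector $v \ge 0$, $v \ne 0$ with $Mv = \lambda v$ from (i), we get $Nv = (\lambda^{a} + \cdots + \lambda^{b})\,v$; since $N > 0$ and $v$ is nonnegative and nonzero, $Nv > 0$, which forces $\lambda > 0$ and then $v > 0$. For uniqueness, first note that any eigenvalue $\mu$ of $M$ admitting a strictly positive eigenvector must equal $\lambda$: pairing that eigenvector against a positive left eigenvector of $M$ (which exists since $M^{\top}$ is again ergodic) identifies $\mu$ with the left eigenvalue, and pairing $v$ against the same left eigenvector identifies $\lambda$ with it too. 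Moreover the $\lambda$-eigenspace is one-dimensional: for a real eigenvector $w$ with $Mw = \lambda w$, set $t_0 = \min_i w_i/v_i$, so $w - t_0 v \ge 0$ has a zero coordinate; it is again a $\lambda$-eigenvector, so were it nonzero the $N$-argument would force it to be strictly positive, a contradiction, whence $w = t_0 v$ (the complex case follows by splitting into real and imaginary parts). Therefore the strictly positive eigenvector is unique up to positive scaling, completing the proof.

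I expect the delicate point to be the perturbation-and-limit step in the second paragraph: one must ensure the limiting right eigenvector does not degenerate to $0$ (handled by working inside the compact simplex $\Delta$, so that $v_k \to v$ with $\sum_i v_i = 1$), and that the spectral dominance survives the limit (handled by first proving the uniform estimate $|\mu| \le \lambda_k$ and only afterward letting $k \to \infty$). Everything else --- the Brouwer argument in the positive case and the ergodic bootstrap --- is routine once the strict positivity of $N$ is used as the main lever.
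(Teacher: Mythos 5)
The paper states \Cref{thm:PF_Theorem} as classical background and gives no proof, so there is no internal argument to compare against. Your proof is a correct and complete rendition of the standard route: Brouwer's fixed-point theorem on the simplex $\Delta$ for strictly positive matrices (producing positive right and left eigenvectors, then the dominance inequality via $|\mu|\,|w|\le A|w|$ paired against the positive left eigenvector), a perturbation $M_k = M + \tfrac1k J$ with compactness of $\Delta$ to handle general nonnegative $M$, and strict positivity of $N = M^a + \cdots + M^b$ as the lever for the ergodic case. The only step that merits an explicit word is the uniform bound ``$\lambda_k \le$ largest row sum of $M_1$'': it follows because $M_k \le M_1$ entrywise for $k \ge 1$ and because evaluating $M_k v_k = \lambda_k v_k$ at a coordinate where $v_k$ is maximal shows the Perron value of a positive matrix is at most its largest row sum. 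Your uniqueness paragraph correctly proves both one-dimensionality of the $\lambda$-eigenspace and that no other eigenvalue admits a positive eigenvector, which together pin down the intended reading of the paper's terse uniqueness clause. I see no gaps.
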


To use the Perron-Frobenius theorem in our setting, we relate a (self) graph map $f$ with the \emph{transition matrix} $M$, defined as follows. Enumerate the edges of $\G$ by positive integers, say $1,\ldots,n$. Then the \textbf{transition matrix} of $f:\G \to \G$ is an $n\times n$ integer matrix, whose $(i,j)$ entry is determined by the number of time the $i$-th edges appear in the edge path $f(j)$. In light of the definitions of ergodic and mixing matrices, we call a graph map \textbf{ergodic} (or \textbf{mixing}) if its transition matrix is ergodic (or mixing, respectively).

Perron-Frobenius implies that an ergodic graph map $f:\G \to \G$ has a transition matrix with a \emph{positive} leading eigenvalue $\lambda$ that is no smaller than the magnitude of the other eigenvalues. Therefore, $\Var(f^n) \sim \lambda^n$ and it follows that $h(f) = \lim_{n \to \infty} \frac{1}{n}\log(\Var(f^n)) = \log(\lambda)$ by \Cref{thm:entropyOfGraphMaps}. This proves the forward direction of \Cref{thm:thurston_main} since, as we will discuss in \Cref{ssec:BG_Traintrack_Structures}, a traintrack representative of an outer automorphism of a free group is a special case of a graph map.

    \subsection{Uniform $\lambda$-Expanders}\label{ssec:BG_Uniform_Expanders}

\begin{DEF}\label{def:expander}
Let $\lambda>0$ be an algebraic integer. Let $\Gamma$ be a metric graph and $\ell$ be a length function on the edge paths in $\Gamma$. We say $f$ is \textbf{uniformly $\lambda$-expanding} if every edge is scaled by the same factor $\lambda$; namely, $\ell(f(e)) = \lambda \ell(e)$ for every edge $e$ of $\Gamma$.
\end{DEF}

Throughout the paper, we construct many maps of this form while working towards the main theorem due to the following useful proposition. 

\begin{PROP}
    \label{prop:entropyOfUniformMaps}
    Let $\lambda$ be an algebraic integer and $f$ be a uniformly $\lambda$-expanding graph map on a finite metric graph. Then $h(f) = \log \lambda$.
\end{PROP}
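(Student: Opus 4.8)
The plan is to apply Theorem~\ref{thm:entropyOfGraphMaps} directly, so the only real work is to compute $\Var(f^n)$ for a uniformly $\lambda$-expanding map. First I would observe that a uniformly $\lambda$-expanding graph map $f$ has only finitely many points at which it fails to be a local homeomorphism: since $f$ sends each edge to an edge path of length $\lambda\ell(e)$, it is injective (hence a local homeomorphism) on the interior of each edge once we choose the obvious affine parametrization, so the only possible failures occur at the finitely many vertices. Thus Theorem~\ref{thm:entropyOfGraphMaps} applies.

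Next I would prove by induction on $n$ that $f^n$ is uniformly $\lambda^n$-expanding, i.e. $\ell(f^n(e)) = \lambda^n \ell(e)$ for every edge $e$. The base case $n=1$ is the hypothesis. For the inductive step, write $f^n(e)$ as an edge path $e_1 e_2 \cdots e_k$; then $f^{n+1}(e) = f(f^n(e))$ is the concatenation $f(e_1)f(e_2)\cdots f(e_k)$, so by additivity of the length function along edge paths and the hypothesis on $f$,
\[
\ell(f^{n+1}(e)) = \sum_{i=1}^{k} \ell(f(e_i)) = \sum_{i=1}^{k} \lambda\, \ell(e_i) = \lambda\, \ell(f^n(e)) = \lambda \cdot \lambda^n \ell(e) = \lambda^{n+1}\ell(e).
\]
Summing over all edges gives $\Var(f^n) = \sum_{e \in E(\Gamma)} \ell(f^n(e)) = \lambda^n \sum_{e \in E(\Gamma)} \ell(e) = \lambda^n\, \Var(\id_\Gamma)$, where $\Var(\id_\Gamma) = \sum_e \ell(e)$ is a fixed positive constant depending only on the metric graph.

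Finally I would plug this into the formula:
\[
h(f) = \lim_{n\to\infty} \frac1n \log \Var(f^n) = \lim_{n\to\infty}\frac1n\bigl(n\log\lambda + \log \Var(\id_\Gamma)\bigr) = \log\lambda.
\]
I don't anticipate a genuine obstacle here; the only point requiring a little care is the bookkeeping in the inductive step — making sure that $f^{n+1}(e)$ really is the edge path obtained by concatenating the $f(e_i)$ over the edges $e_i$ traversed by $f^n(e)$, which is immediate from the definition of a graph map and of iteration, together with the fact that the length function is additive over concatenation of edge paths. (One should also note $\lambda > 0$ so that $\log\lambda$ is defined; this is part of the hypothesis that $\lambda$ is an algebraic integer used as an expansion factor, and in any case a uniformly $\lambda$-expanding map with nonempty edge set forces $\lambda > 0$.)
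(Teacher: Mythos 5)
Your proof is correct and follows essentially the same route as the paper: observe that $\Var(f^n) = \lambda^n \ell(\Gamma)$ and plug into Theorem~\ref{thm:entropyOfGraphMaps}. You simply spell out in more detail the induction that $f^n$ is uniformly $\lambda^n$-expanding and verify the hypothesis of the theorem, both of which the paper takes as obvious.
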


\begin{proof}
    Let $f: \G \to \G$ and 
    $\ell$ denote the length function on $\G$.
    Observe that $\Var(f) = \lambda \ell(\G)$, and more generally $\Var(f^n) = \lambda^n \ell(\G)$. Using \Cref{thm:entropyOfGraphMaps}, we compute
    \[
        h(f) = \lim_{n \to \infty} \frac{1}{n}\log (\Var(f^n)) =  \log(\lambda) + \lim_{n \to \infty } \frac{1}{n}\log(\ell(\G))= \log(\lambda).
        \qedhere
    \]
\end{proof}

    \subsection{Traintrack Structures}\label{ssec:BG_Traintrack_Structures}
   
    Now we will describe the necessary background required to understand the statement of \Cref{thm:thurston_main}. The celebrated work of Bestvina and Handel \cite{Bestvina1992train} gives a geometric approach to understanding $\Out(\F_n)$, the outer automorphism group of a finitely generated free group $\F_n$. See \cite{Bestvina2002, OHGGT} and for a general introduction to $\Out(\F_{n})$ and traintrack maps. Recall
    \[
      \Out(G) = \Aut(G) / \Inn(G),
    \]
where $\Aut(G)$ is the group of automorphisms and $\Inn(G)$ is the group of inner automorphisms of $G$, i.e., the subgroup of $\Aut(G)$ consisting of automorphisms of the type $\varphi: x \mapsto gxg^{-1}$ for $g \in G$.

Bestvina and Handel \cite[Theorem 1.7]{Bestvina1992train} showed that certain outer automorphism 
can be represented by a special map between graphs called a traintrack map (see also \cite[Chapter 6.3]{OHGGT}), which we now define. 

A graph map is \textbf{taut} if it restricts to a local embedding on the interior of each edge. The term taut was chosen to represent the fact that all backtracking in the graph map has been removed. Indeed, any graph map is homotopic to a taut graph map.

A self-graph map $f: \Gamma \to \Gamma$ that is also a homotopy equivalence induces an automorphism of the fundamental group, which is well-defined up to postcomposing an inner automorphism depending on the choice of path connecting base points. The fundamental group of a graph is $\F_n$ for some $n$, so the map $f$ corresponds an element $\psi \in \Out(\F_n)$. The notion of tautness of a graph map is analogous to cyclically reduced words in free groups.

 The standard topology on the intervals descends to a topology on the graph. Consider a small closed neighborhood of a vertex in an directed graph, which we can view as an directed graph itself. The oriented edges in this neighborhood are called \textbf{directions}. Formally, a \textbf{turn} in the graph is a 2-element subset of directions from a single vertex. More intuitively, we can think of a turn as a segment of a path passing through the vertex.
In an oriented graph, we can refer to a turn with a 2-letter long word, where an uppercase letter represents traversing an edge backwards. For example, the turn $bA$ will refer to the turn a path makes after first traveling along the edge $b$ in the forward direction followed by $a$ in the reverse direction. Note that reversing the order of the word and swapping the case of the letters gives the same turn, but now the path representing the turn is traveling in the opposite direction; e.g., $aB$ is the same turn as $bA$. 
In either case, we say that the path takes the given turn. 
In general, a locally embedded path takes a turn at every vertex it crosses, and the turn is determined by the incoming and outgoing edges the path takes at a given vertex. 
 
Now we partition the set of turns into two collections, a set of legal turns and a set of illegal turns. We will always assume that every \emph{backtracking}, a turn of the form $xX$, is illegal. Such a partition is called a \textbf{traintrack structure} on $\G$. A path is \textbf{legal} if it is a local embedding, and it takes a legal turn at each vertex on the path. A path is \textbf{illegal} if it is not legal. Note a graph map sends a turn to another turn, so we have the following definition.

\begin{DEF}
A graph map $f: \Gamma \to \Gamma$ is a \textbf{traintrack map} when it is taut and there exists a traintrack structure on $\Gamma$ such that

\begin{enumerate}[(i)]
    \item legal turns are sent to legal turns, and
    \item every edge is sent to a legal path.
\end{enumerate}

\end{DEF}

The conditions (i) and (ii) together imply that legal paths are sent to legal paths under a traintrack map. Thurston defines a traintrack map to be a map of a graph such that all iterates are local embeddings on each edge. The fact that legal paths are sent to legal paths shows that our definition implies Thurston's.  A traintrack map $f:\Gamma \to \Gamma$ is called \textbf{irreducible} if $f$ does not admit an invariant proper subgraph that is not a tree.

\begin{figure}[ht!]
	    \centering
	    \def\svgwidth{\columnwidth}
		    \import{pics/}{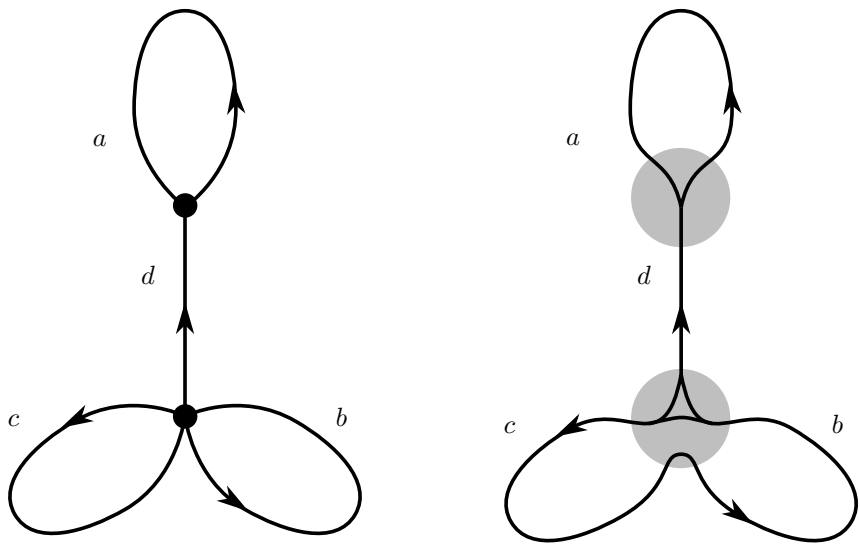}
		    \caption{Left: A graph with labeled oriented edges. Right: A traintrack structure on the graph  with legal turns $da$, $dA$, $Dc$, $DB$, $bc$, and $cb$.}
	    \label{fig:extrain}
\end{figure}

A traintrack structure on a graph is often graphically represented by blowing up each vertex to a disk. Then for each legal turn we draw a smooth path within the corresponding disk which connects the endpoints of edges from the legal turn. The legal turns then correspond to turns which an actual train could make traveling along a track modeled after the picture, i.e., avoiding sharp turns. See \Cref{fig:extrain} for an example. In the given traintrack structure from \Cref{fig:extrain}, no legal path can travel along the $a$ edge in either direction twice in succession (the turn $aa$ is illegal), nor can it travel along $d$ immediately after traveling along $c$ in the forward direction (the turn $cd$ is illegal), etc.

\subsection{Stallings Folds}\label{ssec:BG_Stallings_Folds}
In \Cref{sec:Prototypes} and \Cref{sec:Splitting},
we will make use of Stallings folds, first introduced in \cite{Stallings1983}, to verify that our graph maps are homotopy equivalences. Note that this differs from Thurston's approach in the original proof of \Cref{lem:prototypehe}. We felt that using Stallings folds was more intuitive for a rigorous proof.

\begin{DEF}
    A \textbf{morphism} of graphs is a continuous map of graphs that sends vertices to vertices and edges to edges. An \textbf{immersion} of graphs is a morphism that is locally injective.
\end{DEF}

 Note that a morphism of graphs is different from a graph map since for graphs maps, we require only that edges be sent to \emph{edge-paths}.
Beginning with a graph map, we can subdivide the edges of the domain graph at the complete pre-image of the vertices of the codomain graph in order to obtain a graph morphism in the style of Stallings. 
Note that the property of a morphism being an immersion needs to only be checked at the vertices of the domain. 

\begin{DEF}
    Let $\Gamma$ be a graph and $x_{1}$, $x_{2}$ be two edges of $\Gamma$ sharing a vertex. Let $\Gamma' = \Gamma / (x_{1} \sim x_{2})$. A \textbf{fold} is the natural quotient morphism $\Gamma \rightarrow \Gamma'$.
\end{DEF}

Folds come in two flavors depending on whether the two edges $x_{1},x_{2}$ share a single vertex or share both vertices. Folds between edges that share only a single vertex are called \textbf{Type I} folds and are homotopy equivalences. Folds between edges that share both vertices are \textbf{Type II} folds and fail to be homotopy equivalences (they reduce the rank of the fundamental group by one). See \Cref{fig:foldtypes} for examples of Type I and Type II folds. 

\begin{figure}[ht!]
	    \centering
	    \def\svgwidth{.8\columnwidth}
		    \import{pics/}{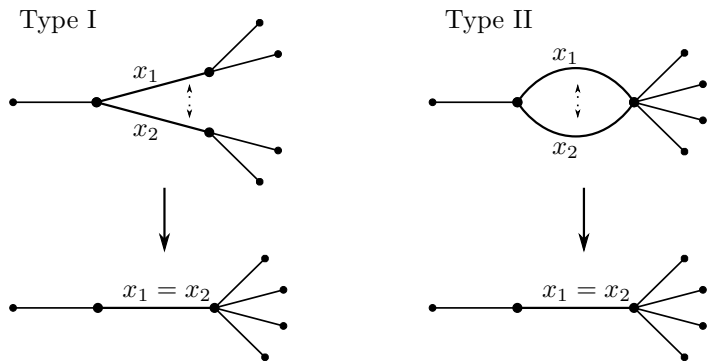}
		    \caption{Examples of Type I and Type II folds.}
	    \label{fig:foldtypes}
\end{figure}

\begin{THM}[\cite{Stallings1983}]\label{thm:stallings}
    Let $\phi:\Gamma \rightarrow \Delta$ be a graph morphism. Then $\phi$ factors as 
    \begin{align*}
        \Gamma = \Gamma_{0} \xrightarrow{F_{1}} \Gamma_{1} \xrightarrow{F_{2}} \cdots \xrightarrow{F_{n}} \Gamma_{n} \xrightarrow{\psi} \Delta,
    \end{align*}
    where each of the $F_{i}$ are folds and $\psi$ is an immersion. 
\end{THM}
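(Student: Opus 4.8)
The plan is to prove Stallings's factorization theorem (\Cref{thm:stallings}) by an induction on a suitable complexity of the morphism, where each inductive step either terminates (the morphism is already an immersion) or applies a single fold to reduce complexity. First I would fix the natural measure of complexity: since a fold of Type I preserves the number of edges (it identifies two edges, but... wait, no) — more carefully, a fold $\Gamma \to \Gamma/(x_1 \sim x_2)$ strictly decreases the number of edges of the graph by one. So I would take the complexity of $\phi : \Gamma \to \Delta$ to simply be the number of edges $|E(\Gamma)|$, a nonnegative integer, and argue the process must terminate.

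The key steps, in order, are as follows. (1) Observe that by the remark preceding the theorem, being an immersion can be checked purely locally at the vertices of $\Gamma$: a morphism fails to be an immersion exactly when there is some vertex $v$ and two distinct edges $x_1, x_2$ incident to $v$ with $\phi(x_1) = \phi(x_2)$ as oriented edges emanating from $\phi(v)$. (2) If no such $v, x_1, x_2$ exist, then $\phi$ is already an immersion and we are done with $n = 0$ and $\psi = \phi$. (3) Otherwise, pick such a pair $x_1, x_2$ and form the fold $F_1 : \Gamma = \Gamma_0 \to \Gamma_1 := \Gamma_0/(x_1 \sim x_2)$. Since $\phi(x_1) = \phi(x_2)$ and $\phi$ sends vertices to vertices and edges to edges compatibly, $\phi$ factors through the quotient: there is a unique graph morphism $\phi_1 : \Gamma_1 \to \Delta$ with $\phi = \phi_1 \circ F_1$. (4) Now $|E(\Gamma_1)| = |E(\Gamma_0)| - 1$, so by induction on the number of edges, $\phi_1$ factors as a finite composition of folds followed by an immersion $\psi$; prepending $F_1$ gives the desired factorization of $\phi$.

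The main obstacle — really the only nontrivial point — is verifying step (3), that the fold quotient is "compatible enough" for $\phi$ to descend to a genuine graph morphism $\phi_1$, and that $\phi_1$ still sends vertices to vertices and edges to edges (so that the induction hypothesis, which is a statement about morphisms, actually applies). This requires being slightly careful about what happens to the far endpoints of $x_1$ and $x_2$: if $x_1, x_2$ share only their initial vertex $v$, then $\phi(x_1) = \phi(x_2)$ forces their terminal vertices to have the same image, and the quotient map on vertices is well-defined on $\Gamma_1$; if they share both endpoints the analysis is similar. One should also note that the quotient of a graph by identifying two edges sharing a vertex is again a graph (no edge gets identified with a vertex, no self-loop pathology arises that breaks the CW structure), so that $\Gamma_1$ is a legitimate object of the induction. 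Once these compatibility checks are in place, the induction is immediate, and the distinction between Type I and Type II folds — and hence whether $\psi$ is a homotopy equivalence — is irrelevant to this statement, though it is exactly the information recorded for the applications in \Cref{sec:Prototypes} and \Cref{sec:Splitting}.
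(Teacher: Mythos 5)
The paper does not actually prove this theorem --- it is stated as a citation to Stallings (1983) and used as a black box --- so there is no ``paper's own proof'' to compare against. That said, your argument is correct and is essentially the standard one from Stallings' original paper: induct on $|E(\Gamma)|$, observe that failure of local injectivity at a vertex produces a foldable pair, check that $\phi$ descends through the fold, and conclude. Your identification of step (3) as the sole nontrivial compatibility check is accurate, and your case analysis (far endpoints distinct versus identified) is the right one.

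Two small points of precision, neither of which is a genuine gap. First, when you say a fold ``strictly decreases the number of edges by one,'' you had a brief hesitation in the text; the resolution you reached is right, since the quotient identifies two distinct edges into one regardless of whether the fold is Type~I or Type~II, and whether or not the vertex count also drops. Second, the immersion criterion is most cleanly phrased in terms of \emph{directions} (half-edges) at $v$ rather than edges incident to $v$, since a loop at $v$ contributes two directions there; your phrasing in terms of ``two distinct edges $x_1,x_2$ incident to $v$ with $\phi(x_1)=\phi(x_2)$ as oriented edges emanating from $\phi(v)$'' captures this correctly once you read ``oriented edge emanating from $v$'' as synonymous with ``direction at $v$,'' but it is worth flagging to avoid a reader misparsing it as an unoriented condition. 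With those phrasings tightened, the proof is complete and correct.
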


This theorem gives an algorithm for verifying that a given graph map is a homotopy equivalence: First one subdivides the domain in order to obtain a graph morphism. Then one performs all possible folds. If all of the folds performed are Type I folds and the final immersion, $\psi$, is a homeomorphism(or equivalently, graph isomorphism) then the original map is a homotopy equivalence. In fact, for our applications we will always only perform Type I folds and $\psi$ will be a graph automorphism.

    \subsection{Perron Numbers}\label{ssec:BG_Perron_Numbers}
        In this section, we compile relevant definitions and facts in algebraic number theory, toward the introduction of Perron numbers.
    
    \begin{DEF}
    An \textbf{algebraic integer} is a complex number that is a root of some monic polynomial with integer coefficients. A polynomial with integer coefficients is also called an integer polynomial.
    
    Given an algebraic integer $\alpha$, the \textbf{minimal polynomial} $p_\alpha$ of $\alpha$ is the integer monic polynomial of the least degree that has $\alpha$ as a root. Then the \textbf{degree} of $\alpha$ denoted by $\deg \alpha$, is the degree of its minimal polynomial $p_\alpha$.
    The \textbf{Galois conjugates}, sometimes simply called the conjugates, of $\alpha$ are the other roots of $p_\alpha$.
    \end{DEF}
    
    \begin{DEF}
    A \textbf{weak Perron number} is a real algebraic integer
    $\lambda=\lambda_1$, whose Galois conjugates $\lambda_2,\ldots,\lambda_d$ have modulus no larger than $\lambda$:
    \[
        \lambda \ge |\lambda_i|, \qquad \text{ for all }i=1,\ldots,d.
    \]
    
    A \textbf{(strong) Perron number} is a real algebraic integer $\lambda=\lambda_1$ that satisfies the strict inequality for the moduli of Galois conjugates $\lambda_2,\ldots,\lambda_d$, namely:
    \[
        \lambda > |\lambda_i|, \qquad \text{ for all }i=2,\ldots,d.
    \]
    \end{DEF}
    
    The following fact will be used in \Cref{sec:Conclusion} to expand our result on Perron numbers to weak Perron numbers.
    \begin{PROP}[{\cite[Theorem 3]{Lind1984Entropy}}]
    \label{prop:PnWP}
        Let $\lambda$ be an algebraic integer. Then $\lambda$ is weak Perron if and only if $\lambda^n$ is Perron for some positive integer $n$.
    \end{PROP}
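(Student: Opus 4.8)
\textbf{Proof proposal for Proposition~\ref{prop:PnWP}.}

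The plan is to prove both directions by direct manipulation of the Galois conjugates, using the fact that the conjugates of $\lambda^n$ are exactly the $n$-th powers of the conjugates of $\lambda$ (up to multiplicity). First I would record this basic fact: if $\lambda$ has conjugates $\lambda_1 = \lambda, \lambda_2, \ldots, \lambda_d$ (the roots of its minimal polynomial $p_\lambda$), then every conjugate of $\lambda^n$ is of the form $\lambda_i^n$ for some $i$, and conversely each $\lambda_i^n$ is a conjugate of $\lambda^n$ or equal to $\lambda^n$ itself. This follows because any Galois automorphism $\sigma$ of the splitting field sends $\lambda \mapsto \lambda_i$ and hence $\lambda^n \mapsto \lambda_i^n$, and the conjugates of $\lambda^n$ are the Galois orbit of $\lambda^n$.

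For the forward direction, suppose $\lambda$ is weak Perron, so $\lambda \ge |\lambda_i|$ for all $i$. The naive hope is that $\lambda^n > |\lambda_i^n|$ for suitable $n$, but this fails when some conjugate $\lambda_i$ has $|\lambda_i| = \lambda$ exactly — for instance if $\lambda_i = \lambda \zeta$ for a root of unity $\zeta \ne 1$, then $|\lambda_i^n| = \lambda^n$ for all $n$. The resolution is to choose $n$ so that $\lambda^n$ is the \emph{only} conjugate of modulus $\lambda^n$; this requires understanding which conjugates $\lambda_i$ satisfy $|\lambda_i| = \lambda$. The key structural input (this is essentially the content of Lind's argument) is that if $|\lambda_i| = \lambda = |\lambda|$ with $\lambda$ real positive, then $\lambda_i / \lambda$ is an algebraic number of absolute value $1$ all of whose conjugates also have absolute value $1$ (since all conjugates of $\lambda_i$ have modulus $\le \lambda$ and the product of all of them, up to sign, is the constant term of a monic integer polynomial, forcing all the ratios to the appropriate powers to be units on the unit circle); by Kronecker's theorem such an algebraic number is a root of unity. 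Hence each such $\lambda_i$ equals $\lambda \zeta_i$ for a root of unity $\zeta_i$. Letting $N$ be the least common multiple of the orders of these finitely many roots of unity, we get $\lambda_i^N = \lambda^N$ for every conjugate with $|\lambda_i| = \lambda$, while $|\lambda_j^N| = |\lambda_j|^N < \lambda^N$ for the rest. Therefore $\lambda^N$ is strictly larger in modulus than all of its conjugates (a conjugate of $\lambda^N$ distinct from $\lambda^N$ must be $\lambda_j^N$ with $|\lambda_j| < \lambda$), so $\lambda^N$ is a strong Perron number.

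For the converse, suppose $\lambda^n$ is Perron for some positive integer $n$. First note $\lambda$ is real: a priori $\lambda$ could be complex, but $\lambda^n$ being Perron forces $\lambda^n$ to be a positive real, and combined with $\lambda$ being an algebraic integer one argues $\lambda$ itself is a positive real — indeed if $\lambda$ were non-real or negative, then $\bar\lambda$ or $-\lambda$ (as appropriate) would be a conjugate-type obstruction; more carefully, $\lambda$ real follows since $\lambda = (\lambda^n)^{1/n}$ must be chosen as the positive real root and one checks $\lambda$ lies in $\mathbb{R}$ because otherwise some conjugate of $\lambda^n$ would tie $\lambda^n$ in modulus. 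Then for any conjugate $\lambda_i$ of $\lambda$, $\lambda_i^n$ is a conjugate of $\lambda^n$, so $\lambda^n \ge |\lambda_i^n| = |\lambda_i|^n$, and taking $n$-th roots gives $\lambda \ge |\lambda_i|$. Hence $\lambda$ is weak Perron.

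The main obstacle is the forward direction, specifically justifying that conjugates of $\lambda$ lying on the circle $|z| = \lambda$ are exactly $\lambda$ times a root of unity; this is where Kronecker's theorem on algebraic integers whose conjugates all lie in the closed unit disk enters, and it is the one genuinely non-elementary ingredient. Since this is cited from Lind, I would state it and refer to \cite{Lind1984Entropy} rather than reprove it, but I would at least sketch the Kronecker step for completeness.
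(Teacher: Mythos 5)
The paper gives no proof of this statement; it simply cites \cite[Theorem 3]{Lind1984Entropy}, so there is no argument in the text to compare yours against. Evaluating your proposal on its own terms, the overall strategy — reduce the forward direction to showing that conjugates $\lambda_i$ with $|\lambda_i| = \lambda$ satisfy $\lambda_i = \lambda\zeta_i$ for roots of unity $\zeta_i$, then take $N = \operatorname{lcm}(\operatorname{ord}\zeta_i)$ — is the right one and is indeed the content of Lind's theorem. However, your justification of that key step does not hold up.

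You assert that $\mu = \lambda_i/\lambda$ has all conjugates of modulus $1$ and then invoke Kronecker's theorem. Both halves of this are problematic. First, Kronecker's theorem requires $\mu$ to be an \emph{algebraic integer}, and $\lambda_i/\lambda$ generally is not one: for conjugate algebraic integers $\lambda, \lambda_i$ of norm $\ne \pm 1$ the ratio typically fails to be integral (already in quadratic fields, $(a-b\sqrt{D})/(a+b\sqrt{D})$ has a non-monic integer minimal polynomial when $a^2 - b^2D \ne \pm 1$). For a non-integral algebraic number the conclusion of Kronecker's theorem is simply false — $\alpha = (3+4i)/5$ has $|\alpha|=1$, its only other conjugate $\bar\alpha$ also has modulus $1$, and $\alpha$ is not a root of unity. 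Second, the parenthetical justification that ``all conjugates of $\lambda_i/\lambda$ have modulus $1$'' does not follow from the observation about the constant term of $p_\lambda$: a Galois conjugate of $\lambda_i/\lambda$ has the form $\tau\sigma(\lambda)/\tau(\lambda)$ for $\tau$ ranging over the Galois group (where $\sigma(\lambda) = \lambda_i$), and there is no a priori reason these ratios lie on the unit circle. So the one ``genuinely non-elementary ingredient'' you flag is exactly where your sketch breaks, and the sketched mechanism is not Lind's. (Lind's argument is genuinely more involved; it is not a direct application of Kronecker to the ratio.) There is also a minor issue in the converse: the claim that $\lambda^n$ Perron forces $\lambda$ real is not established by the reasoning given — $\lambda = 2^{1/4}i$ satisfies $\lambda^4 = 2$, which is Perron, yet no conjugate of $2$ ``ties'' it in modulus. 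The proposition, as quoted, implicitly assumes $\lambda$ is a real algebraic integer (as the paper's definition of weak Perron already requires), and the converse direction is fine once that standing assumption is made explicit.
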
 
    
    Now let $\lambda$ be a Perron number.
    Denote by $\Q(\lambda)$ the \textbf{number field} that is the smallest field extension over $\Q$ containing $\lambda$. As $\lambda$ is an algebraic integer, we have $\Q(\lambda) = \Q[x]/(p_\lambda)$. Note $\deg \Q(\lambda) = \deg \lambda$. Let $\cO_\lambda$ be \textbf{the ring of integers} in the field $\Q(\lambda)$, defined as the set of all algebraic integers in $\Q(\lambda)$. Then the ring of integers will be realized as a submodule of the number field with exactly $\deg \lambda$ basis elements:
    \begin{FACT}[{\cite[Theorem 9, pp.20--21]{Markus2018Number}}]\label{fact:ROIsublattice}
        If $\mathbb{Q}(\lambda)$ is a number field of degree $d$, then its ring of integers $\cO_\lambda$ is a free $\Z$-submodule of $\mathbb{Q}(\lambda)$ of rank $d$. 
        In other words, there are $d$ elements $\alpha_1,\ldots,\alpha_d \in \cO_\lambda$ 
        such that
        \[
            \cO_\lambda = \{m_1\alpha_1+\ldots+m_d\alpha_d\ |\ m_1,\ldots,m_d \in \Z\}.
        \]
    \end{FACT}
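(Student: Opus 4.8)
The statement to prove is Fact~\ref{fact:ROIsublattice}: that the ring of integers $\cO_\lambda$ of a degree-$d$ number field $\Q(\lambda)$ is a free $\Z$-module of rank $d$. Since the paper cites this as a known result from \cite{Markus2018Number}, the expected ``proof'' here is really a pointer to the standard argument, but let me sketch how I would carry it out from first principles.

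The plan is to sandwich $\cO_\lambda$ between two free $\Z$-modules of rank $d$ using the \emph{trace form}, and then invoke the structure theorem for finitely generated modules over a PID. First I would fix a $\Q$-basis of $\Q(\lambda)$ consisting of algebraic integers — for instance $1, \lambda, \lambda^2, \ldots, \lambda^{d-1}$, which lie in $\cO_\lambda$ since $\lambda$ is an algebraic integer and $\cO_\lambda$ is a ring. This immediately gives a free rank-$d$ submodule $L = \Z\langle 1, \lambda, \ldots, \lambda^{d-1}\rangle \subseteq \cO_\lambda$, so $\cO_\lambda$ has rank \emph{at least} $d$. For the upper bound, I would introduce the trace pairing $\langle x, y\rangle = \mathrm{Tr}_{\Q(\lambda)/\Q}(xy)$, which is a nondegenerate symmetric $\Q$-bilinear form (nondegeneracy uses separability of the extension, automatic in characteristic zero), and which takes \emph{integer} values on $\cO_\lambda \times \cO_\lambda$ because the trace of an algebraic integer is a rational algebraic integer, hence in $\Z$. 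Letting $\{\beta_i\}$ be the dual basis to $\{\lambda^{i}\}$ with respect to this form, a standard argument shows $\cO_\lambda \subseteq M := \Z\langle \beta_1, \ldots, \beta_d\rangle$: any $x \in \cO_\lambda$ written as $x = \sum c_i \beta_i$ has coefficients $c_i = \langle x, \lambda^{i-1}\rangle \in \Z$. Thus $L \subseteq \cO_\lambda \subseteq M$ with $L, M$ free of rank $d$.

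Finally, since $\Z$ is a PID and $\cO_\lambda$ is sandwiched between two free $\Z$-modules of rank $d$, it is itself a finitely generated torsion-free $\Z$-module, hence free, and its rank is squeezed to be exactly $d$. This completes the argument.

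The only genuinely delicate point is the claim that $\mathrm{Tr}_{\Q(\lambda)/\Q}(\alpha) \in \Z$ for $\alpha \in \cO_\lambda$: one must observe that the trace is, up to sign, a coefficient of the characteristic polynomial of multiplication-by-$\alpha$ on $\Q(\lambda)$, that this characteristic polynomial is a power of the minimal polynomial of $\alpha$, and that the minimal polynomial of an algebraic integer has integer coefficients. Everything else — nondegeneracy of the trace form, existence of the dual basis, and the PID structure theorem — is routine linear algebra and standard module theory, so I would simply cite it rather than reproduce it. In the expository paper itself it is entirely reasonable to state Fact~\ref{fact:ROIsublattice} with a reference to \cite{Markus2018Number} and omit the proof, as the authors have done.
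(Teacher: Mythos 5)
Your proof is correct and is the standard argument found in the cited reference (Marcus, \emph{Number Fields}): sandwich $\cO_\lambda$ between the free rank-$d$ module $\Z[1,\lambda,\ldots,\lambda^{d-1}]$ and the dual lattice under the trace pairing, then invoke the structure theorem for finitely generated modules over the PID $\Z$. The paper states this fact without proof, citing Marcus, so there is no in-paper argument to compare against; your sketch fills in exactly the argument the citation points to, including the one subtle ingredient (integrality of the trace of an algebraic integer).
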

    
    \begin{RMK}\label{rmk:ROImaximallattice}
        
        For an algebraic integer $\mu$ of degree $d$, $\Z[\mu] \subset \cO_\mu$ will also be a rank $d$ submodule of $\Q(\mu)$, but $\Z[\mu]$ may not be \emph{maximal} in the sense that, for some $\mu$, the containment is proper $\Z[\mu] \subsetneq \cO_\mu$.
        
        For example, when $\mu = \sqrt{5}$, then $\cO_\mu = \Z\left[\frac{1+\sqrt{5}}{2}\right]$ as $5 \equiv 1 \mod 4$. (refer to \cite[Chapter 2, Corollary 2]{Markus2018Number}). This properly contains $\Z[\sqrt{5}]$ and both of these are rank-2 submodules of $\Q[\sqrt{5}]$.
        Even when $\lambda$ is a Perron number, for example take $\lambda = 1+\sqrt{5}$, it is possible that $\Z[\lambda] \subsetneq \cO_\lambda$. Additionally, there are many examples where $\cO_\lambda \neq \Z[\mu]$ for every algebraic integer $\mu \in \cO_\lambda$. (For an example, refer to \cite[pp 64--65]{Narkiewicz2004Elementary}.) Such are said to be \textbf{non-monogenic}.
    \end{RMK}
    
    For \Cref{ssec:AST_Uniform_Expanding}, we need the following lemma. 
    \begin{LEM}[Even Lemma] \label{lem:even}
       Let $\lambda$ be an algebraic integer. Then, there exist $n \neq n_0 \in \Z^+$ such that
        \[
            \lambda^n \equiv \lambda^{n_0} \mod 2\cO_\lambda.
        \]
    \end{LEM}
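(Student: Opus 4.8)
The plan is to exploit the fact, recorded in \Cref{fact:ROIsublattice}, that $\cO_\lambda$ is a finitely generated $\Z$-module, so that reduction modulo $2\cO_\lambda$ produces a \emph{finite} quotient; the conclusion is then immediate from the pigeonhole principle applied to the sequence of powers of $\lambda$.

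First I would observe that $\lambda$, being an algebraic integer contained in $\Q(\lambda)$, lies in $\cO_\lambda$, and since $\cO_\lambda$ is a ring, every power $\lambda^n$ with $n \in \Z^+$ also lies in $\cO_\lambda$. Next, by \Cref{fact:ROIsublattice} the group $\cO_\lambda$ is a free $\Z$-module of rank $d = \deg\lambda$, so the quotient $\cO_\lambda / 2\cO_\lambda$ is isomorphic as an abelian group to $(\Z/2\Z)^d$ and in particular is finite, with exactly $2^{d}$ elements.

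Now consider the map $\Z^+ \to \cO_\lambda/2\cO_\lambda$ given by $n \mapsto \lambda^n + 2\cO_\lambda$. Its domain is infinite while its codomain is finite, so the map cannot be injective: there exist distinct $n, n_0 \in \Z^+$ with $\lambda^n + 2\cO_\lambda = \lambda^{n_0} + 2\cO_\lambda$, that is, $\lambda^n \equiv \lambda^{n_0} \pmod{2\cO_\lambda}$, which is the desired conclusion.

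I do not expect any genuine obstacle here: beyond elementary counting, the only ingredient is the structure theorem for rings of integers in \Cref{fact:ROIsublattice}, which is exactly what makes $\cO_\lambda/2\cO_\lambda$ finite. (The same argument verbatim works with $2$ replaced by any integer $m \ge 2$, yielding $\lambda^n \equiv \lambda^{n_0} \pmod{m\cO_\lambda}$; only the case $m = 2$ is needed for the construction of uniformly $\lambda$-expanding star maps in \Cref{ssec:AST_Uniform_Expanding}, which explains the name ``Even Lemma''.)
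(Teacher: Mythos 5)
Your proof is correct and follows exactly the same approach as the paper: invoke \Cref{fact:ROIsublattice} to see that $\cO_\lambda/2\cO_\lambda \cong (\Z/2\Z)^d$ is finite, then apply the pigeonhole principle to the powers of $\lambda$. Your write-up is somewhat more explicit about why $\lambda^n \in \cO_\lambda$ and about the cardinality of the quotient, but the argument is the same.
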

    \begin{proof}
    Since $\cO_\lambda/2\cO_\lambda \cong (\Z/2 \Z)^d$ is finite, by the pigeonhole principle there must be some $n \neq n_0 \in \Z^+$ for which $\lambda^n \equiv \lambda^{n_0} \mod 2\cO_\lambda$. 
   
    \end{proof}
    
    \begin{RMK}[Counterexample for $n_0 = 0$]\label{rmk:n0error}
    In \cite{Thurston2014Entropy}, Thurston stated the Even Lemma with $n_0=0$, but that statement is false in general. Indeed, consider $\lambda = \frac{3+\sqrt{17}}{2}$.  This is a degree 2 Perron number with minimal polynomial $p_\lambda(x) = x^2 - 3x - 2$.  Because $17 \equiv 1 \mod 4$, $\mathcal{O}_\lambda = \Z[\lambda]$. We can write the basis elements $1, \lambda \in \Z[\lambda]=\mathcal{O}_\lambda$ as ordered pairs with respect to the basis for $\Q(\lambda) = \Q^2 = \langle 1 , \lambda \rangle$; that is $\Z[\lambda] = \langle 1, \lambda \rangle = \langle (1, 0), (0, 1) \rangle$.  
    
    We claim that there is no $n \in \Z^+$ for which $\lambda^n \equiv 1 = (1, 0) \mod 2\cO_\lambda$,
    where here $2\cO_\lambda = \langle (2, 0), (0, 2) \rangle$.  In fact, we assert that $\lambda^n \equiv (0, 1) \mod2\cO_\lambda$ for all $n \in \Z^+$. Using the relationship given from the minimal polynomial $\lambda^2 = 3\lambda + 2$, we see that this is true for the first few powers of $\lambda$: 
    \begin{align*} 
        \lambda^1 &= (0,1) \not\equiv (1, 0) \mod 2\cO_\lambda,\\ 
        \lambda^2 &= (2, 3) \equiv(0,1) \not\equiv (1, 0) \mod 2\cO_\lambda,\\ 
        \lambda^3 &= (6, 11) \equiv(0,1) \not\equiv (1, 0) \mod 2\cO_\lambda.\\
    \end{align*} 
    More generally, as we will see in \Cref{lem:CompanionMatrixMult}, the multiplication by $\lambda$ on an element in $\Q(\lambda) \supset \cO_\lambda$ can be realized as the multiplication by the companion matrix $C_\lambda = \begin{pmatrix}
    0 & 2\\
    1 & 3
    \end{pmatrix}$ on the corresponding ordered pair written as a column vector.  It follows inductively that for $n >1$:
    \begin{equation*} 
    \lambda^n = C_\lambda \cdot  \lambda^{n - 1} \equiv \begin{pmatrix}
    0 & 2\\
    1 & 3
    \end{pmatrix}\begin{pmatrix} 0\\1 \end{pmatrix} = \begin{pmatrix} 2\\3 \end{pmatrix}
    \equiv
    \begin{pmatrix}
    0 \\ 1
    \end{pmatrix}
    \mod 2\cO_\lambda,
    \end{equation*} 
    thus $\lambda^n \equiv (0, 1) \not\equiv(1, 0) \mod 2\cO_\lambda$ for all $n \in \Z^+$.  
    
    \end{RMK}

\section{Star Maps}\label{sec:Asterisk_Maps}\vspace{1em}
    \subsection{Definition of a Star Map}\label{ssec:AST_Definition}

\begin{DEF} A \textbf{star} (referred to as an asterisk graph in \cite{Thurston2014Entropy}) with $n$-tips is the complete bipartite graph $\ast_n = K_{1, n}$. 

\begin{center}
    \asterisk{3}{1} \qquad \asterisk{4}{1} \qquad \asterisk{5}{1} \qquad
\begin{tikzpicture}
    \node[circle,fill=red,scale=0.3] at (360:0mm) (center) {};
    \foreach \n in {1,...,7}{
        \node[circle,fill=blue,scale=0.3] at ({(\n-3)*360/9}:1cm) (n\n) {};
        \draw (center)--(n\n);
    }
    \node at (-.5,-.25) {$\ddots$};
    \node at (0,-1*1.5) {$\ast_n$}; 
\end{tikzpicture}
\end{center}

A \textbf{star map} is a self graph map of a star, $f \colon \ast_n \to \ast_n$ such that: 
\begin{itemize} 
\item[(a)] $f$ fixes the center vertex (in this way $f$ preserves the bipartite structure of $\ast_n$), and
\item[(b)] the map that sends each edge to the first edge of its image is a permutation. We will refer to this map as the \textbf{first edge map}.
\end{itemize} 
\end{DEF}

We note that a star map can be always homotoped to be taut. Namely, the image of each edge can be reduced by canceling back-tracking, only leaving the last letter.
However, we will let our star maps have back-trackings, and will not pass to a simpler representative in its homotopy class. Such redundancy will be crucial when we are defining the \textit{split} of a star map in \Cref{sec:Splitting}.

\begin{EX} Let $f \colon \ast_3\to \ast_3$ be the star map defined by: 

\begin{center}
\begin{tabular}{m{3cm} m{3cm}}
\\
\begin{tikzpicture}
    \node[circle,fill=red,scale=0.3] at (360:0mm) (center) {};
    \foreach \n in {1,...,3}{
        \node[circle,fill=blue,scale=0.3] at ({(\n)*360/3}:1cm) (n\n) {};
        \draw[midarrow]  (center)--(n\n);
        }
     \node at ({360/3-20}:0.5cm) {$a$};
     \node at ({2*360/3-20}:0.5cm) {$b$};
     \node at ({3*360/3-20}:0.5cm) {$c$};
\end{tikzpicture}

&
$
f : \begin{cases} 
a \mapsto cCb\\
b \mapsto bBa\\
c \mapsto aAbBc
\end{cases} 
$

\end{tabular}
\end{center}
Since $f$ maps each edge in $\ast_3$ to an edge path of odd (unsigned) length it must fix the center vertex.  Also, letting $f_1$ be the first edge map, we have that $\text{Im}(f_1) = \{a,b,c\}$.  Thus, $f$ is a star map.  
\end{EX}

The aim of this section is to prove the existence of a uniformly $\lambda$-expanding star map for every Perron number $\lambda$.  

\begin{THM}[{\cite[Theorem 6.2]{Thurston2014Entropy}}, Uniformly Expanding Star Maps for Perron Numbers]
\label{thm:astmapPerron}
    Let $\lambda$ be a Perron number. Then there exists $n > 0$ and a star map $f: \ast_n \to \ast_n$ such that $f$ is a uniform $\lambda$-expander with mixing transition matrix.
    
\end{THM}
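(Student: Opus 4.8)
The plan is to convert the existence of a star map into a statement about the arithmetic of the number field $\Q(\lambda)$, and then to exploit the defining feature of a strong Perron number: multiplication by $\lambda$ strictly expands one archimedean coordinate of $\Q(\lambda)\otimes\R$ while contracting every other.

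First I would set up the combinatorial dictionary. A star map $f\colon\ast_n\to\ast_n$ is determined by its edge-words $f(e_i)=e_{j_1}\overline{e_{j_1}}\cdots e_{j_k}$, and an odd-length word of this shape corresponds precisely to a column of nonnegative integers $(m_1,\dots,m_n)^{T}$ — with $m_s$ the number of traversals of $e_s$ — in which \emph{exactly one} entry is odd, namely the entry of the terminal edge $e_{j_k}$; conversely every such column is realized by some word, whose leading edge $e_{j_1}$ can still be chosen with considerable freedom. Writing $\ell_s=\ell(e_s)$ and $M$ for the transition matrix, the uniform $\lambda$-expansion condition is the single linear equation $M^{T}\vec\ell=\lambda\vec\ell$. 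Hence it suffices to exhibit a finite generating set $\beta_1,\dots,\beta_n$ of a $\lambda$-stable lattice $L\subseteq\Q(\lambda)$ (I will take $L=\cO_\lambda$), all of whose images under the real Perron embedding $\sigma\colon\Q(\lambda)\hookrightarrow\R$, $\sigma(\lambda)=\lambda$, are \emph{positive}, together with, for each $i$, an identity $\lambda\beta_i=\sum_j m_{ij}\beta_j$ in $L$ in which the $m_{ij}$ are nonnegative and exactly one is odd. Then $\ell_s:=\sigma(\beta_s)>0$ automatically solves $M^{T}\vec\ell=\lambda\vec\ell$, so $\vec\ell$ is a positive eigenvector of $M^{T}$, and once $M$ is also shown to be mixing, \Cref{thm:PF_Theorem} forces $\lambda$ to be its Perron–Frobenius eigenvalue.

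The heart of the argument is a geometric construction in $\Q(\lambda)\otimes\R\cong\R^{d}$ with $d=\deg\lambda$, where $L$ is a full-rank lattice (\Cref{fact:ROIsublattice}) and multiplication by $\lambda$ is a linear map $T_\lambda$ (cf.\ \Cref{lem:CompanionMatrixMult}) that scales the Perron coordinate by $\lambda>1$ while every other eigenvalue and complex-block scaling factor has modulus $<\lambda$; this strict domination is exactly where the strong, rather than merely weak, Perron hypothesis enters. I would fix a narrow closed cone $C$ around the positive Perron axis so that $T_\lambda(C)\subseteq\operatorname{int}(C)$ and $\sigma>0$ on $C\setminus\{0\}$, and let $\{\beta_1,\dots,\beta_n\}$ be the set of all lattice points of $L$ lying in $C$ with norm at most a large constant $R$. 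The key claim is that each $T_\lambda(\beta_i)$ is a \emph{nonnegative integer} combination of $\{\beta_1,\dots,\beta_n\}$: since $T_\lambda(\beta_i)$ sits deep inside $C$, one peels off generators of bounded size, one at a time, staying in $C$, until the remainder has norm at most $R$ and is itself a generator — a greedy procedure that succeeds provided $C$ is narrow enough and $R$ large enough relative to the covering radius of $L$ and the contraction of $T_\lambda$ transverse to the Perron axis. Upgrading the easy ``nonnegative real combination'' to a ``nonnegative integer combination of the fixed finite set $\{\beta_i\}$'' is the main obstacle, and the source of all the quantitative bookkeeping.

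Finally I would fix the parities, arrange the first-edge permutation, and secure mixing. For parities, invoke the Even Lemma (\Cref{lem:even}) to pick $n>n_0\ge 1$ with $\lambda^{n}\equiv\lambda^{n_0}\bmod 2\cO_\lambda$, including among the $\beta_i$ all powers $\lambda^{j}$ for $j$ in a long enough range (all sufficiently high powers lie in $C$, their non-Perron coordinates having decayed): for such a power $\lambda\cdot\lambda^{j}=\lambda^{j+1}$ is already a single generator, so those columns each have exactly one odd entry, and the one ``wrap-around'' column is handled using $\lambda^{n}-\lambda^{n_0}\in 2\cO_\lambda$, whose half is written as an \emph{even} nonnegative combination by the greedy argument, leaving the coefficient of $\lambda^{n_0}$ as the unique odd entry. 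This is exactly the step at which Thurston's choice $n_0=0$ is needed and can fail (\Cref{rmk:n0error}). For the first-edge map, the freedom in ordering the middle letters $e_{j_1},\dots,e_{j_{k-1}}$ of each word, combined with a Hall-type matching argument (enlarging the generating set if necessary so every column carries enough mass), lets one choose the leading edges to be distinct. And for mixing, one checks that the follower graph of $M$ is strongly connected and aperiodic — every generator can be routed to and from the cycle $\lambda^{n_0}\to\lambda^{n_0+1}\to\cdots\to\lambda^{n}\to\lambda^{n_0}$, and cycles of coprime lengths rule out periodicity, after adjusting the generating set by finitely many elements if need be — so \Cref{thm:PF_Theorem} identifies its Perron–Frobenius eigenvalue with the eigenvalue $\lambda$ of $\vec\ell$, and $f\colon\ast_n\to\ast_n$ is the required uniformly $\lambda$-expanding star map with mixing transition matrix.
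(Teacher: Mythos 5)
Your strategic ingredients are essentially those of the paper's proof: the companion-matrix action on $\Q(\lambda)\otimes_\Q\R$, the strictly invariant cone around the Perron axis, the lattice $\cO_\lambda$, the Even Lemma, and the translation of the star-map conditions into the problem of writing $\lambda\cdot\beta_i$ as a nonnegative integer combination of the $\beta_j$ with exactly one odd coefficient and a realizable first-edge permutation. Your ``greedy'' claim that $\lambda\beta_i$ decomposes over lattice points in a narrow cone of bounded norm is, up to packaging, Gordan's Lemma (\Cref{prop:Gordan}) together with the Slim Cone Lemma (\Cref{prop:SlimCone}), so the analytic core of your sketch is genuinely the right one.

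What you are missing is the \emph{fan structure}, the organizing device that makes all three side-constraints (parity, first-edge permutation, mixing) tractable at once. The paper does not take all lattice points in a ball. It takes the Gordan generators $s_1,\dots,s_m$ of $S_\lambda$, builds a star with $mM$ tips labelled $(s_k,i)$ for $1\le i\le M$ with lengths $\lambda^{i-1}s_k$, and defines $f_\lambda$ to be the \emph{shift} $(s_k,i)\mapsto(s_k,i+1)$ for $i<M$, concentrating the entire arithmetic content in the $m$ terminal columns $(s_k,M)$. This at once (a) forces the first-edge map to be a cyclic permutation within each fan with no Hall-type matching needed, (b) reduces the parity constraint to the $m$ terminal columns only --- every shift column is a unit vector --- and (c) reduces mixing to a concrete iteration count along fans. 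Your sketch controls parity only for the ``power'' columns $\lambda^j\mapsto\lambda^{j+1}$; for a generic generator $\beta_i$ the greedy decomposition gives nonnegative integers but says nothing about parities, which is a genuine gap, and your Hall-type step and follower-graph argument for mixing are asserted rather than established. The paper's terminal-column recipe, $\lambda^M s_k = \bigl[\sum_i(2e_i^{(k)}+2)s_i\bigr] + 2\lambda s_k + \lambda^{n_0}s_k$, obtained by feeding $g_k=\lambda^{n_0}s_k+2(T+\lambda s_k)$ with $T=\sum_i s_i$ into the Slim Cone Lemma and then invoking the Even Lemma (\Cref{lem:even}), is engineered so that exactly one coefficient --- that of $(s_k,n_0+1)$ --- is odd, every $(s_i,1)$ appears at least twice (which drives both the first-edge freedom and the $M^2$-step mixing verification), and the first edge of the image is $(s_k,1)$. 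If you adopt this fan organization, the remainder of your outline carries through with little modification.
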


\begin{EX}
\label{ex:5-uniformAstExample}
The following is an example of a star map that satisfies \Cref{thm:astmapPerron}.  
Suppose $\lambda = 5$. Consider the star graph $\ast_4$ and endow each edge with length 1. Let $f \colon \ast_4 \to \ast_4$ be defined by 

\begin{center}
\begin{tabular}{m{3cm} m{3cm}}
\begin{tikzpicture}
    \node[circle,fill=red,scale=0.3] at (360:0mm) (center) {};
    \foreach \n in {1,...,4}{
        \node[circle,fill=blue,scale=0.3] at ({(\n)*360/4}:1cm) (n\n) {};
        \draw[midarrow]  (center)--(n\n);
        }
     \node at ({360/4-20}:0.5cm) {$a$};
     \node at ({2*360/4-25}:0.5cm) {$b$};
     \node at ({3*360/4-20}:0.5cm) {$c$};
     \node at ({4*360/4-20}:0.5cm) {$d$};
\end{tikzpicture}
&
$
f \colon \begin{cases} 
    a \mapsto bBdDb\\
    b \mapsto aAcCc\\
    c \mapsto dDbBa\\
    d \mapsto cCbBd
\end{cases} 
$
\end{tabular} 
\end{center} 
We see that $f$ is a star map since the first edge map $f_1$ permutes the edges ($a \to b \to a$ and $c \to d \to c$),

and $f$ respects the bipartite structure since it maps each edge to an edge path of odd length.  Furthermore, $f$ is a $5$-uniform expander since $\ell(f(e)) = 5 = 5 \ell(e)$ for all edges $e$.  

It is straighforward to check that the cube of the transition matrix is positive so $f$ is mixing. This example can easily be generalized to a construction for $\lambda$ equal to any \emph{odd} integer.

\end{EX}

    \subsection{Geometry of Perron Numbers}\label{ssec:AST_Geometry_of_Perron_Numbers}
        Let $\lambda$ be a Perron number with $\deg \lambda = d$. In this section, we will study the dynamics of $\lambda$-multiplication on $\mathbb{Q}(\lambda)$.

    Viewing $\Q(\lambda)$ as a $d$-dimensional $\Q$-vector space, we \emph{extend scalars} by tensoring $\Q(\lambda)$ with $\R$:
    \[
        V_\lambda:= \Q(\lambda) \otimes_\Q \R \cong \R^d,
    \]
    where the latter isomorphism comes from realizing $\Q(\lambda)$ as a $d$-dimensional $\Q$-vector space with basis $1,\lambda,\ldots,\lambda^{d-1}$. Then, we have an isomorphism:
    \[
        V_\lambda = \Q(\lambda) \otimes_\Q \R \longrightarrow \R^d, \qquad 
        \sum_{i=0}^{d-1} (\lambda^i \otimes_\Q r_i)    \longmapsto    (r_0,\ldots,r_{d-1}).
    \]
    
    Using this identification, we can view a number $q \in \Q(\lambda)$ as a $d$-dimensional column vector $\mathbf{q} \in V_\lambda$. Namely, with the basis $\{1,\lambda,\ldots,\lambda^{d-1}\}$ of $\Q(\lambda)$, write $q = \sum_{i=0}^{d-1} (\lambda^i \cdot q_i) \in \Q(\lambda)$ with $q_i \in \Q$ for all $i=0,\ldots,d-1$. Then $q$ corresponds to a $d$-dimensional column vector $\mathbf{q}=[q_0,\ldots,q_{d-1}]^t$ in $V_\lambda$.
    
    Now we convert the $\lambda$-multiplication action on $\Q(\lambda)$ into matrix multiplication by the \emph{companion matrix}, $C_\lambda$, of $\lambda$ on the corresponding vector space $V_\lambda \cong \mathbb{R}^d$.
    Recall the \textbf{companion matrix} $C_\mu$ of an algebraic integer $\mu$ of degree $d$ is the $d \times d$ matrix:
    \[
        C_\mu =
        \begin{pmatrix}
            0 & 0 & \cdots & 0 & -c_0 \\
            1 & 0 & \cdots & 0 & -c_1 \\
            0 & 1 & \cdots & 0 & -c_2 \\
            \vdots & \vdots & \ddots & \vdots & \vdots \\
            0 & 0 & \cdots & 1 & -c_{d-1}
        \end{pmatrix}
    \]
    where the $c_i$'s are the integer coefficients of the minimal polynomial of $\mu$; $p_\mu(x) = c_0+c_1x + \ldots + c_{d-1}x^{d-1}+x^d$.
    
    \begin{LEM}
    \label{lem:CompanionMatrixMult}
    Let $\lambda$ be an algebraic integer, and $q \in \Q(\lambda)$.
    Then $\lambda \cdot q \in \Q(\lambda)$ corresponds to $C_\lambda 
    \cdot \mathbf{q} \in V_\lambda$, where $C_\lambda$ is the companion matrix of $\lambda$.
    \end{LEM}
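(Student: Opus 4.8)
The plan is to unravel both sides of the claimed identity directly in coordinates. Since $\lambda$ has degree $d$, its minimal polynomial $p_\lambda$ has degree $d$ and $\Q(\lambda) \cong \Q[x]/(p_\lambda)$, so $\{1,\lambda,\ldots,\lambda^{d-1}\}$ genuinely is a $\Q$-basis of $\Q(\lambda)$; after tensoring with $\R$ it becomes an $\R$-basis of $V_\lambda$, and this is exactly the basis used to set up the identification $V_\lambda \cong \R^d$ and to define the column vector $\mathbf q$ attached to $q$. Multiplication by $\lambda$ is a $\Q$-linear endomorphism of $\Q(\lambda)$ which extends $\R$-linearly to $V_\lambda$, and the asserted correspondence $\lambda q \leftrightarrow C_\lambda\mathbf q$ is $\R$-linear in $q$; hence it suffices to verify it on the basis vectors $\lambda^0,\lambda^1,\ldots,\lambda^{d-1}$, that is, to check that the matrix of $\lambda$-multiplication with respect to this basis is exactly $C_\lambda$.

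First I would treat the basis vectors $\lambda^j$ with $0\le j\le d-2$: here $\lambda\cdot\lambda^j=\lambda^{j+1}$, which is itself the $(j+2)$-nd basis vector, so the $(j+1)$-st column of the matrix is the standard vector $e_{j+2}$, matching the subdiagonal of $1$'s in $C_\lambda$. Then I would treat the last basis vector $\lambda^{d-1}$: the relation $p_\lambda(\lambda)=0$ gives $\lambda^d=-(c_0+c_1\lambda+\cdots+c_{d-1}\lambda^{d-1})$, so $\lambda\cdot\lambda^{d-1}=\lambda^d$ has coordinate vector $(-c_0,-c_1,\ldots,-c_{d-1})^t$, which is precisely the last column of $C_\lambda$. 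Assembling the columns shows the matrix of $\lambda$-multiplication in this basis equals $C_\lambda$, and therefore $\lambda q$ corresponds to $C_\lambda\mathbf q$ for every $q\in\Q(\lambda)$. As a sanity check, when $d=1$ one has $\lambda\in\Z$, $p_\lambda(x)=x-\lambda$, so $C_\lambda=(\lambda)$ and the statement reduces to the triviality $\lambda\cdot q=\lambda q$.

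There is no real obstacle here; the only thing to be careful about is index bookkeeping — keeping straight whether rows or columns of $C_\lambda$ encode images of basis vectors, and the off-by-one shift between ``the power $\lambda^i$'' and ``the $(i+1)$-st coordinate'' — together with stating once that it is enough to check the identity on a basis. Alternatively, one can avoid the language of matrices of linear maps entirely: write $q=\sum_{i=0}^{d-1}\lambda^i q_i$, compute $\lambda q=\sum_{i=1}^{d-1}\lambda^i q_{i-1}+q_{d-1}\lambda^d$, substitute $\lambda^d=-\sum_{i=0}^{d-1}c_i\lambda^i$, and read off that the $\lambda^0$-coefficient of $\lambda q$ is $-c_0 q_{d-1}$ while the $\lambda^i$-coefficient is $q_{i-1}-c_i q_{d-1}$ for $1\le i\le d-1$; these are exactly the entries of $C_\lambda\mathbf q$ computed row by row. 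Either route is only a few lines.
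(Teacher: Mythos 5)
Your proof is correct, and your ``alternative route'' at the end (expand $q=\sum \lambda^i q_i$, compute $\lambda q$, substitute $\lambda^d=-\sum c_i\lambda^i$, and read off the coefficients) is essentially verbatim the paper's proof. Your primary argument via checking the identity on the basis vectors $1,\lambda,\ldots,\lambda^{d-1}$ is just a minor repackaging of the same computation, so the two approaches coincide in substance.
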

    
    \begin{proof}
    Let $\deg \lambda =d$ with the minimal polynomial $p_\lambda(x) = c_0 + c_1 x + \ldots+ c_{d-1}x^{d-1}+x^d$. Since $p_\lambda(\lambda)=0$, we have
    \[
        \lambda^d = \sum_{i=0}^{d-1} -c_i \lambda^{i}.
    \]
    Writing $q \in \Q(\lambda)$ as $\sum_{i=0}^{d-1}(\lambda^i \cdot q_i)$ for $q_i \in \Q$, we identify $q$ with $\mathbf{q}=[q_0,\ldots,q_{d-1}]^t$ in $V_\lambda$. Now
    \begin{align*}
        \lambda \cdot q &= \lambda \sum_{i=0}^{d-1} (\lambda^i \cdot q_i) = \left\{\sum_{i=0}^{d-2} (\lambda^{i+1} \cdot q_i)\right\} + \lambda^d \cdot q_{d-1} \\
        &= \left\{\sum_{i=1}^{d-1}\lambda^i \cdot q_{i-1}\right\} + \sum_{i=0}^{d-1}(-c_i\lambda^{i}) \cdot q_{d-1}\\
        &= -c_0 q_{d-1} \cdot \lambda^{0} + \sum_{i=1}^{d-1} \left(q_{i-1}-c_iq_{d-1}\right)\cdot \lambda^i,
    \end{align*}
    where the last line can be identified with the matrix product:
    \[
        \begin{pmatrix}
            0 & 0 & \cdots & 0 & -c_0 \\
            1 & 0 & \cdots & 0 & -c_1 \\
            0 & 1 & \cdots & 0 & -c_2 \\
            \vdots & \vdots & \ddots & \vdots & \vdots \\
            0 & 0 & \cdots & 1 & -c_{d-1}
        \end{pmatrix}
        \begin{pmatrix}
            q_0 \\ q_1 \\ q_2 \\ \vdots \\ q_{d-1}
        \end{pmatrix}
        = C_\lambda \mathbf{q},
    \]
    concluding the proof.
    \end{proof}
    
    Given \Cref{lem:CompanionMatrixMult}, we will interchangeably use $C_\lambda$-action and $\lambda$-action by identifying $\lambda \cdot q \in \Q(\lambda)$ and $C_\lambda \cdot \mathbf{q} \in V_\lambda$.

    Next, we find a subset of $V_\lambda$ that is invariant under $C_\lambda$-multiplication. 
    As this is a linear action, the natural choice of such an subset is an eigenspace of $C_\lambda$.
    However, as $\lambda$ is a Perron number, we can find a more useful invariant space $K_\lambda$ as follows.
    First, by definition of the companion matrix, the minimal polynomial of $C_\lambda$ is exactly the minimal polynomial $p_\lambda$ of $\lambda$. Since any finite extension over $\Q$ is separable, $p_\lambda$ has no repeating roots.
    Hence, we can enumerate the eigenvectors of $C_\lambda$ as $v_1,\ldots,v_d$, where $v_1$ is associated with the leading eigenvalue $\lambda$. Normalize $v_1,\ldots,v_d$ to have norm 1. Each $\mathbf{v} \in V_\lambda$ can be expressed as $\mathbf{v}=a_1v_1+\ldots+a_dv_d$ for some $a_i \in \R$. Now define the invariant open cone $K_\lambda$ as follows:
    \[
        K_\lambda:= \{a_1v_1+\ldots+a_dv_d \in V_\lambda\ |\ a_1>0, \quad a_1>|a_i|, \text{ for all $i=2,\ldots,d$}\}.
    \]
    Namely, $K_\lambda$ is the set of all points in $V_\lambda$ whose projection to the $\lambda$-eigenspace of $C_\lambda$
    is positive and larger than the size of projection to any of the other eigenspaces.
    
    Note $K_\lambda$ is \textbf{polyhedral}, namely the cone is generated by finitely many vectors in $V_\lambda$. Indeed, the $2(d-1)$ bisectors, $\{v_{1} \pm v_{i}\}_{i=2}^{d}$, will generate $K_{\lambda}$. We label these by $\{w_1, \ldots, w_{2d-2}\}$ in the remainder of this section. Note that on each of these bisectors the projections onto either $v_{1}$ or $v_{i}$ have the same magnitude. See \Cref{fig:InvCone} for $K_\lambda$ with $d=2$.

    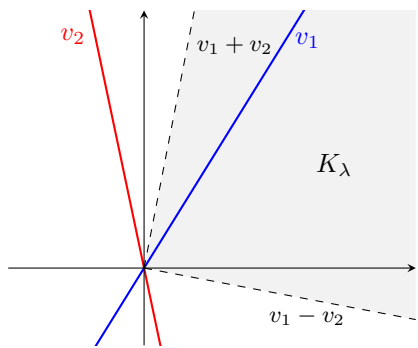
\begin{figure}[ht!]
    \begin{tikzpicture}
\begin{axis}[
    axis on top,
    axis lines = center,
    ticks=none,
    ymax=10,
    ymin=-3,
    xmax=10,
    xmin=-5,
]

\addplot [name path=v,
    domain=-20:20, 
    samples=100, 
    color=red,
    style=thick
    ]
    {-5*x} node[left,pos=.455] {$v_2$};

\addplot [name path=u,
    domain=-20:20, 
    samples=100, 
    color=blue,
    style=thick
    ]
    {17/10*x} node[right,pos=.63] {$v_1$};

\addplot [name path=A,
    domain=0:10, 
    samples=100, 
    color=black,
    style=dashed
    ]
    {-1/5*x} node[below,pos=.6] {\small $v_1 - v_2$};

\addplot [name path=B,
    domain=0:20, 
    samples=100, 
    color=black,
    style=dashed
    ]
    {27/5*x} node[right,pos=.08] {\small $v_1 + v_2$};
    
\addplot[gray!10] fill between[of=A and B, soft clip={domain=0:10},];

\node at (axis cs:7,4) {$K_\lambda$};

\end{axis}
\end{tikzpicture}
\caption{$v_1\pm v_2$ generate $K_\lambda$ when $d = 2$.} \label{fig:InvCone}
    \end{figure}

    The fact that $\lambda$ is Perron makes $K_\lambda$ invariant under $C_\lambda$ multiplication. Moreover, we have the proper containment $C_\lambda \cdot K_\lambda \subsetneq K_\lambda$. To see why, let $\mathbf{v}=a_1v_1+\ldots+a_dv_d$, with $\lambda_i$ being the associated eigenvalue corresponding to the eigenvector $v_i$ for $i=1,\ldots,d$. Note $\lambda_1=\lambda$. Then,
    \[
        C_\lambda \cdot \mathbf{v} = a_1\lambda_1v_1 + \ldots + a_d\lambda_d v_d.
    \]
    Because $\lambda$ is Perron, we have that $|\lambda_1|>|\lambda_i|$ for all $i=2,\ldots,d$. Therefore,
    \[
        |a_1\lambda_1| > |a_i\lambda_i|, \qquad \text{for all $i=2,\ldots,d$},
    \]
    showing that $C_\lambda \cdot \mathbf{v} \subset K_\lambda$. To see that the containment is proper, it suffices to show that the faces of $K_\lambda$ get mapped to the interior of $K_\lambda$.
    Indeed, let $\mathbf{v}=av_1+av_i$ with $a \neq 0$ be a point on a face of $K_\lambda$. Then $C_\lambda \cdot \mathbf{v}=a\lambda_1v_1+a\lambda_iv_i$. Since $|a\lambda_1| > |a\lambda_i|$, it follows that $C_\lambda \cdot \mathbf{v}$ has \emph{strictly} larger projection on $\<v_1\>$ than on $\<v_i\>$, showing $C_\lambda \cdot \mathbf{v}
    \in K_\lambda$.

    Recall by \Cref{fact:ROIsublattice}, we can embed $\cO_\lambda$ into $\Q(\lambda) \subset V_\lambda$ as a rank-$d$ lattice.
    In fact, we would like to trim the invariant cone $K_\lambda$ to have faces passing through such nice lattice points. Any such cone generated by vectors in the lattice will be called \textbf{rational}.
    
    \begin{PROP}[Rational Cone; cf. {\cite[Proposition 3.4]{Thurston2014Entropy}}]
    \label{prop:rationalCone}
        There is a rational polyhedral convex cone $KR_\lambda$ contained in $K_\lambda$ and containing $\lambda \cdot K_\lambda$.
    \end{PROP}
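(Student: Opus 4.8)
The plan rests on a fact recorded just before the statement: the companion matrix $C_\lambda$ carries $\overline{K_\lambda}\setminus\{0\}$ into the \emph{open} cone $K_\lambda$. (If $\mathbf v=\sum a_iv_i\in\overline{K_\lambda}\setminus\{0\}$ then $a_1>0$ and $a_1\ge|a_i|$, so $a_1\lambda=|a_1\lambda_1|>|a_i||\lambda_i|=|a_i\lambda_i|$ for $i\ge 2$ since $\lambda$ is Perron; this is just the displayed computation in the paragraph before the proposition, applied to every point of $\overline{K_\lambda}$ rather than only to its faces.) Hence the extreme rays of $\overline{\lambda K_\lambda}$ sit strictly inside $K_\lambda$, which leaves room to thicken each of them into a full-dimensional \emph{rational} subcone of $K_\lambda$; I would then take $KR_\lambda$ to be the cone generated by all of these subcones.

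In detail, I would first note that $\lambda$-multiplication is a field automorphism of $\Q(\lambda)$, so by the lemma identifying it with the $C_\lambda$-action, $C_\lambda$ is a linear automorphism of $V_\lambda$, in particular a homeomorphism, whence $\overline{C_\lambda K_\lambda}=C_\lambda\overline{K_\lambda}$. Since $\overline{K_\lambda}$ is polyhedral (as observed above) it is finitely generated, say $\overline{K_\lambda}=\operatorname{cone}(w_1,\dots,w_m)$. Then $\overline{\lambda K_\lambda}=C_\lambda\overline{K_\lambda}=\operatorname{cone}(u_1,\dots,u_m)$ with $u_i:=C_\lambda w_i$, and by the fact above each $u_i$ lies in the open cone $K_\lambda$.

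Next I would invoke density: under the coordinate isomorphism $V_\lambda\cong\R^d$ fixed earlier, $\Q(\lambda)$ is exactly $\Q^d$, hence dense in $V_\lambda$. For each $i$, because $u_i$ lies in the open convex cone $K_\lambda$, I can find $d$ vectors spanning a simplicial cone that contains $u_i$ in its interior and is contained in $K_\lambda$; since ``$u_i$ lies in the interior of the cone generated by $p^1,\dots,p^d$'' and ``$p^1,\dots,p^d\in K_\lambda$'' are open conditions on the $p^j$, I may perturb these $d$ vectors to points $p_i^1,\dots,p_i^d\in\Q(\lambda)$ while keeping both conditions. Clearing denominators, I may take $p_i^j\in\cO_\lambda$, so each $\operatorname{cone}(p_i^1,\dots,p_i^d)$ is rational in the sense defined above.

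Finally, set $KR_\lambda:=\operatorname{cone}\big(\{\,p_i^j : 1\le i\le m,\ 1\le j\le d\,\}\big)$, a convex rational polyhedral cone. It contains $\lambda K_\lambda$: since $u_i\in\operatorname{cone}(p_i^1,\dots,p_i^d)\subseteq KR_\lambda$ for all $i$, we get $\overline{\lambda K_\lambda}=\operatorname{cone}(u_1,\dots,u_m)\subseteq KR_\lambda$. And it is contained in $K_\lambda$ (apart from the origin, which lies in every cone and is harmless): each generator $p_i^j$ lies in the convex cone $K_\lambda$, and a nonnegative combination of points of an open convex cone, with not all coefficients zero, again lies in that cone. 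The one genuinely delicate point is the approximation step, where rationality of the generators \emph{and} the engulfing inclusion $\lambda K_\lambda\subseteq KR_\lambda$ must be secured simultaneously; this is possible only because $C_\lambda$ pushes $\partial K_\lambda$ into the interior of $K_\lambda$, so every ray of $\overline{\lambda K_\lambda}$ has room around it to be surrounded by rational rays that still lie inside $K_\lambda$.
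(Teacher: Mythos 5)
Your proof is correct, and it reaches the same conclusion by a genuinely different construction than the paper's. The paper works in projective space $\P(V_\lambda)$: it shows $\P(\cO_\lambda)$ is dense there, notes that $\overline{w_j}$ and $\overline{\lambda\cdot w_j}$ are \emph{distinct} projective points (this requires the observation that $w_j=v_1\pm v_i$ is a sum of eigenvectors for two different eigenvalues), and then perturbs the $2d-2$ generators $\overline{w_j}$ of $\P(K_\lambda)$ \emph{inward} to rational points $\overline{u_j}$ whose convex hull sits in the projective annulus $\P(K_\lambda)\setminus\P(\lambda K_\lambda)$. You instead stay in $V_\lambda$, observe that $C_\lambda$ pushes $\overline{K_\lambda}\setminus\{0\}$ strictly into the open cone $K_\lambda$, and thicken each ray $\R_{>0}u_i$ of $\overline{\lambda K_\lambda}$ \emph{outward} by a small rational simplicial cone inside $K_\lambda$, taking $KR_\lambda$ to be the cone generated by all the rational generators so obtained. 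Both arguments rest on the same two ingredients (the proper inclusion $\lambda K_\lambda\subsetneq K_\lambda$ furnished by the Perron condition, and density of $\Q(\lambda)$, hence of $\cO_\lambda$ up to scaling, in $V_\lambda$), but your affine version sidesteps the projective non-identification lemma entirely and is arguably the cleaner route; its only cost is producing a redundantly large generating set (up to $d\cdot m$ generators rather than $2d-2$), which is harmless here.

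Two small points. First, multiplication by $\lambda$ on $\Q(\lambda)$ is a $\Q$-linear automorphism, not a field automorphism (it fails to respect the ring multiplication); only the linear statement is used, so the slip is cosmetic. Second, you correctly flag that $0\in KR_\lambda$ but $0\notin K_\lambda$, so the inclusion $KR_\lambda\subset K_\lambda$ is to be read modulo the apex; the paper's statement has exactly the same abuse.
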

    
    \begin{proof}
       
        Consider the projective space $\P(V_\lambda) \cong \P(\R^d)$. 
        We first claim that $\P(\cO_\lambda)$ is dense in $\P(V_\lambda)$. This follows from the fact that $\mathbb{Q}(\lambda)$ is dense in $V_\lambda$ and that $\mathbb{P}(\cO_\lambda) = \mathbb{P}(\mathbb{Q}(\lambda))$.
        Indeed, by definition of the ring of integers, for any $x \in \Q(\lambda)$ there exists $m \in \Z^+$ such that $mx \in  \cO_\lambda$ where we can pick $m$ to be the least common multiple of the denominators of coefficients of the (monic and rational) minimal polynomial of $x$.
        
        Recall we have the proper containment $\lambda \cdot K_\lambda \subsetneq K_\lambda$. Let $w_1,\ldots,w_{2d-2}$
        be the generators of the cone $K_\lambda$ described above. Then $\lambda \cdot w_1,\ldots, \lambda \cdot w_{2d-2}$ are the generators of the cone $\lambda \cdot K_\lambda$ where $w_j \neq \lambda \cdot w_j$.
        Now, project the generators onto $\P(V_\lambda)$. 
        Note that $w_j$ and $\lambda \cdot w_j$ are \emph{not} identified in $\P(V_\lambda)$ because here $\lambda \cdot w_j$ corresponds to the vector $C_\lambda \cdot w_j$ and $w_j= v_1 \pm v_i$, where $v_1$ and $v_i$ are eigenvectors for different eigenvalues of $\C_\lambda$.
        
        All in all, $\P(\cO_\lambda)$ is dense in $\P(K_\lambda) \setminus \P(\lambda \cdot K_\lambda)$ which is nonempty, so for each $j=1,\ldots,2d-2$ we can pick $\overline{u_j} \in \P(\cO_\lambda)$ arbitrarily close to $\overline{w_j}$ such that the convex hull of $\{\overline{u_1},\ldots,\overline{u_{2d-2}}\}$ lies between $\P(\lambda \cdot K_\lambda)$ and $\P(K_\lambda)$.
    Finally, letting $KR_\lambda$ be the closed rational polyhedral cone generated by $u_1,\ldots,u_{2d-2}$, by construction we have    
        \[
            \lambda \cdot K_\lambda \subset KR_\lambda \subset K_\lambda,
        \]
    which concludes the proof.
    \end{proof}
    
    Define $S_\lambda \dfn \left(\cO_\lambda \cap KR_\lambda\right) \setminus \{0\}$, the set of lattice points in the rational cone $KR_\lambda$ in $V_\lambda$, minus the origin. Note $S_\lambda$ is equipped with a semigroup structure, as both $\cO_{\lambda}$ and $KR_\lambda$ are closed under the addition. Now we show it is also \emph{finitely generated}.
    
    \begin{PROP}[Gordan's Lemma; cf. {\cite[Proposition 3.5]{Thurston2014Entropy}}] \label{prop:Gordan}
    $S_\lambda$ is a finitely generated semigroup.
    \end{PROP}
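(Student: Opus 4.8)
The plan is essentially to reproduce the classical proof of Gordan's Lemma. Since $KR_\lambda$ is a rational polyhedral cone by \Cref{prop:rationalCone}, it is by definition generated by finitely many lattice vectors $u_1,\ldots,u_m \in \cO_\lambda$. First I would introduce the compact box
\[
    P \dfn \Big\{ \textstyle\sum_{j=1}^{m} t_j u_j \ \Big|\ 0 \le t_j \le 1 \text{ for } j = 1,\ldots,m \Big\} \subset V_\lambda,
\]
which is the image of the cube $[0,1]^m$ under a linear map and hence compact. Since $\cO_\lambda$ is a full-rank lattice in $V_\lambda$ by \Cref{fact:ROIsublattice} (it contains $1,\lambda,\ldots,\lambda^{d-1}$, an $\R$-basis of $V_\lambda$, and is therefore discrete), the intersection $P \cap \cO_\lambda$ is finite. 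Set $G \dfn (P \cap \cO_\lambda)\setminus\{0\}$; note that each $u_j$ lies in $G$ and that $G \subset S_\lambda$. I claim $G$ generates $S_\lambda$ as a semigroup, which gives the result.

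The key step is a floor decomposition. Take any $x \in S_\lambda$, so $x \in \cO_\lambda \cap KR_\lambda$ with $x \ne 0$. Because $x \in KR_\lambda$, write $x = \sum_{j=1}^m t_j u_j$ with all $t_j \ge 0$, put $n_j \dfn \lfloor t_j \rfloor \in \Z_{\ge 0}$, and set
\[
    r \dfn x - \sum_{j=1}^m n_j u_j = \sum_{j=1}^m (t_j - n_j) u_j.
\]
Then $\sum_j n_j u_j$ is a $\Z$-linear combination of the $u_j \in \cO_\lambda$, hence lies in $\cO_\lambda$, so $r \in \cO_\lambda$ as well; and $0 \le t_j - n_j < 1$ for every $j$, so $r \in P$. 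Thus $r \in P \cap \cO_\lambda$, and
\[
    x = r + \sum_{j=1}^m n_j u_j
\]
exhibits $x$ as a finite sum of elements of $P \cap \cO_\lambda$. Discarding the zero summands — which cannot all vanish, as $x \ne 0$ — writes $x$ as a finite sum of elements of $G$, proving the claim.

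There is no real obstacle here; the only points needing a little care are that the generators $u_j$ of $KR_\lambda$ can be taken inside the lattice $\cO_\lambda$ (which is exactly what the notion of a rational cone and the construction in \Cref{prop:rationalCone} provide), the finiteness of $P \cap \cO_\lambda$ (discreteness of the lattice together with compactness of $P$), and the minor bookkeeping that lets us drop the origin and land in a semigroup rather than a monoid. An alternative is to fix a $\Z$-basis of $\cO_\lambda$ via \Cref{fact:ROIsublattice} and argue coordinatewise, but the box argument above is the cleanest route.
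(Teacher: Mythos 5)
Your proof is correct and is essentially the same argument the paper gives: both use the compact fundamental parallelotope spanned by the cone generators (your $P$ is the paper's $H$), observe that its intersection with the lattice $\cO_\lambda$ is finite by compactness plus discreteness, and then decompose an arbitrary element of $S_\lambda$ into an integral part and a fractional-remainder part lying in that finite set. The only cosmetic difference is that you explicitly discard the origin to stay in the semigroup, a detail the paper leaves implicit.
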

    \begin{proof}
        Say $KR_{\lambda}$ is generated by $u_{1},\ldots,u_{k} \in \cO_{\lambda}$ constructed in the proof of \Cref{prop:rationalCone}. 
   Let $H:= \{\sum_{i=1}^{k}a_{i}u_{i}\, |\,  a_{i} \in [0,1]\} \subset KR_{\lambda}$. Since $H$ is compact and $\cO_{\lambda}$
    is discrete, it follows that $H \cap \cO_{\lambda}$ is finite.  We claim that this finite set, which we label $\{s_1, \ldots, s_m\}$, is the desired generating set for $S_{\lambda}$, which we now show. Note that $u_i \in H \cap \cO_\lambda$ by taking $a_i = 1$ and $a_j = 0$ for $j \neq i$. Therefore, the set $\{s_1, \ldots, s_m\}$ contains the set $\{u_1, \ldots, u_k\}$.
    
    Now, pick any $u = \sum_{i=1}^{k}b_{i}u_{i} \in S_{\lambda}$, where $b_i \in \mathbb{Q}_{\ge 0}$ and $u \ne 0$. Then each $b_{i}$ can be decomposed into an integer and non-integer part $b_{i} = n_{i} + r_{i}$, where $n_{i} \in \Z_{\ge 0}$ and $r_{i} \in [0,1)$, and
    \[
      u = \left(\sum_{i=1}^{k}n_{i}u_{i}\right) + \left(\sum_{i=1}^{k}r_{i}u_{i}\right).
    \]
 We aim to show that $u$ can be written as an non-negative integer sum of $\{s_1, \ldots, s_m\}$. Since $u_i \in \{s_1, \ldots, s_m\}$ for all $i$, the left part of the summation,  $\sum_{i=1}^{k}n_{i}u_{i}$, is a non-negative integer sum of $\{s_1, \ldots, s_m\}$. In addition, $\sum_{i=1}^{k}r_{i}u_{i} \in H$ by definition.
    However, $u- \sum_{i=1}^{k} n_{i}u_{i} \in \cO_{\lambda}$, so that $\sum_{i=1}^{k}r_{i}u_{i} \in H \cap \cO_{\lambda}$. This means that $\sum_{i=1}^{k}r_{i}u_{i}$ is one of the $s_j$ itself, which concludes the proof.  
    \end{proof}
    
    \begin{PROP}[Slim Cone Lemma; cf. {\cite[Section 4]{Thurston2014Entropy}}]
    \label{prop:SlimCone}
    Let $s \in S_\lambda$. Then there exists $N_s>0$ such that $n \ge N_s$ implies that $\lambda^{n} \cdot S_\lambda \subset s+S_\lambda$.
    \end{PROP}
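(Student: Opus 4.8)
The plan is to reduce the claim to the finitely many semigroup generators of $S_\lambda$ furnished by Gordan's Lemma (\Cref{prop:Gordan}), and then to establish it for a single generator by an asymptotic argument: iterating the $C_\lambda$-action carries any vector of $S_\lambda$ off to infinity along the Perron eigendirection $v_1$, and $v_1$ lies in the \emph{interior} of the rational cone $KR_\lambda$, so after rescaling by $\lambda^{-n}$ the fixed vector $s$ becomes negligible and the result stays inside $KR_\lambda$.

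First I would record two closure facts. Since $s\in S_\lambda$ and $S_\lambda$ is a semigroup under addition, $s+S_\lambda$ is closed under addition, as $(s+a)+(s+b)=s+(s+a+b)$; and $\lambda^n s_k-s\in\cO_\lambda$ whenever $s_k\in\cO_\lambda$, since $\lambda\in\cO_\lambda$ and $\cO_\lambda$ is a ring. Now write $S_\lambda=\langle s_1,\ldots,s_m\rangle$ as a semigroup, so every $u\in S_\lambda$ can be written $u=\sum_k c_k s_k$ with $c_k\in\Z_{\ge 0}$ and $\sum_k c_k\ge 1$. Assuming we have produced, for each $k$, an integer $N_k$ with $\lambda^n s_k\in s+S_\lambda$ for all $n\ge N_k$ — say $\lambda^n s_k=s+t_k$ with $t_k\in S_\lambda$ — then for $n\ge\max_k N_k$ we get
\[
  \lambda^n u=\sum_k c_k(s+t_k)=s+\Bigl(\bigl(\sum_k c_k-1\bigr)s+\sum_k c_k t_k\Bigr),
\]
and the bracketed quantity is a nonempty nonnegative-integer combination of elements of $S_\lambda$ (nonempty because $\sum_k c_k\ge 1$), hence lies in $S_\lambda$. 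So $\lambda^n u\in s+S_\lambda$, and it suffices to take $N_s=\max_k N_k$ and treat the generators one at a time.

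So fix a generator $s_k\in S_\lambda\subseteq KR_\lambda\subseteq K_\lambda$, and identify $\lambda$-multiplication with the companion matrix $C_\lambda$ via \Cref{lem:CompanionMatrixMult}. Expanding $s_k$ in the eigenbasis of $C_\lambda$, its $v_1$-component is $\alpha_1 v_1$ with $\alpha_1>0$ (the defining inequality of $K_\lambda$). Because $\lambda$ is Perron, $|\lambda_j|<\lambda$ for all $j\ge 2$, so $\lambda^{-n}C_\lambda^{\,n}s_k\to\alpha_1 v_1$ while $\lambda^{-n}s\to 0$, and hence $\lambda^{-n}(\lambda^n s_k-s)\to\alpha_1 v_1$. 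Now $\lambda\cdot K_\lambda$ is an open cone (the image of the open cone $K_\lambda$ under the linear automorphism $C_\lambda$) and is contained in $KR_\lambda$ by \Cref{prop:rationalCone}, hence $\lambda\cdot K_\lambda\subseteq\operatorname{int}(KR_\lambda)$; and $v_1=C_\lambda(\lambda^{-1}v_1)\in\lambda\cdot K_\lambda$ since $\lambda^{-1}v_1\in K_\lambda$, so $\alpha_1 v_1\in\operatorname{int}(KR_\lambda)$. Consequently, for all sufficiently large $n$, $\lambda^{-n}(\lambda^n s_k-s)$ lies in $\operatorname{int}(KR_\lambda)$; since $KR_\lambda$ is a cone and $\lambda^{-n}>0$ this forces $\lambda^n s_k-s\in\operatorname{int}(KR_\lambda)$, in particular it is nonzero, and it also lies in $\cO_\lambda$. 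Therefore $\lambda^n s_k-s\in S_\lambda$, i.e. $\lambda^n s_k\in s+S_\lambda$, which is exactly what the reduction required.

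I expect the main obstacle to be precisely the step of keeping $\lambda^n s_k-s$ — rather than merely $\lambda^n s_k$ — inside $KR_\lambda$: the containment $\lambda\cdot K_\lambda\subseteq KR_\lambda$ only gives $\lambda^n s_k\in KR_\lambda$ for $n\ge 1$, and subtracting the fixed vector $s$ could a priori push the difference back out across a face of $KR_\lambda$. The fix is to renormalize by $\lambda^{-n}$ and exploit that the limit $\alpha_1 v_1$ lies in the \emph{open} cone $\lambda\cdot K_\lambda$, which is contained in $\operatorname{int}(KR_\lambda)$ by \Cref{prop:rationalCone}; this is where the strict Perron inequality $|\lambda_j|<\lambda$ genuinely enters (it is what makes $\lambda^{-n}C_\lambda^{\,n}s_k$ converge). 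Everything else is bookkeeping with the finite generating set from \Cref{prop:Gordan}.
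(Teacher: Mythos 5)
Your proof is correct and reaches the same conclusion by a genuinely different technical route. Both you and the paper reduce to the finite generating set $\{s_1,\ldots,s_m\}$ from Gordan's Lemma (\Cref{prop:Gordan}) and then exploit that the Perron eigendirection $v_1$ attracts; but the paper handles the per-generator step by working in projective space, first proving a subclaim (\Cref{claim:Slim_attracting}) that $\mathbb{P}(s+KR_\lambda)$ surjects onto the interior of $\mathbb{P}(KR_\lambda)$, and then invoking convergence of $\bar{\lambda^n s_i}$ to the projective fixed point $\bar{v_1}$. You instead renormalize by $\lambda^{-n}$ and argue directly in $V_\lambda$: since $\lambda^{-n}C_\lambda^n s_k\to\alpha_1 v_1$ with $\alpha_1>0$ and $\lambda^{-n}s\to 0$, the difference $\lambda^{-n}(\lambda^n s_k - s)$ converges to $\alpha_1 v_1$, which you show lies in $\operatorname{int}(KR_\lambda)$ via the observation that $\lambda\cdot K_\lambda$ is open (being the image of the open cone $K_\lambda$ under the linear automorphism $C_\lambda$) and is contained in $KR_\lambda$ by \Cref{prop:rationalCone}, with $v_1 = C_\lambda(\lambda^{-1}v_1)\in\lambda\cdot K_\lambda$. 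This avoids projectivization entirely and makes fully explicit the step the paper states tersely (how projective closeness to $\bar{v_1}$ combines with $\lambda^n s_i\in S_\lambda$ to give $\lambda^n s_i\in s+S_\lambda$): your argument shows the difference $\lambda^n s_k - s$ lands in the open cone at a definite stage, hence is a nonzero element of $\cO_\lambda\cap KR_\lambda = S_\lambda\cup\{0\}$. Your bookkeeping in the reduction step — writing $\lambda^n u = s + ((\sum_k c_k - 1)s + \sum_k c_k t_k)$ and noting the bracketed term is a nonempty nonnegative-integer combination so lies in $S_\lambda$ — is also more careful than the paper's appeal to $s+S_\lambda$ being ``a semigroup'' (which, read literally, it is not under addition). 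Both approaches buy the same result; yours is somewhat more elementary and self-contained.
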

    \begin{proof}
        Since $S_\lambda$ is finitely generated (\Cref{prop:Gordan}), we can let $S_\lambda = \<s_1,\ldots,s_m\>$. To show the conclusion, it suffices to prove the following claim:
        \begin{CLAIM} \label{claim:Slim_subclaim}
            Fix $s \in S_\lambda$. For each $i =1,\ldots,m$, there exists $N_i>0$ such that $n \ge N_i$ implies that $\lambda^n \cdot s_i \in s + S_\lambda$.
        \end{CLAIM}
        \Cref{claim:Slim_subclaim} then implies the conclusion by taking  $N_s:=\max\{N_1,\ldots,N_m\}$. Then for any $t = a_1s_1+\ldots+a_ms_m \in S_\lambda$ with $a_i \in \Z_{\ge 0}$ for all $i$, it follows that for $n > N_s$,
        \[
            \lambda^n \cdot t = \lambda^n \cdot \left(\sum_{i=1}^m a_is_i\right)
            =\sum_{i=1}^m a_i (\lambda^n \cdot s_i) \in s + S_\lambda
        \]
        because $\lambda^n \cdot s_i \in s+S_\lambda$ for each $i$ and $s+S_\lambda$ is a semigroup.
        
         To prove \Cref{claim:Slim_subclaim}, we need another claim: 
         
        \begin{CLAIM} \label{claim:Slim_attracting}
            For any $s \in S_\lambda$, the projection of
            $s+KR_\lambda$ surjects onto the interior of $\P(KR_\lambda)$.
        \end{CLAIM} 
        
        \begin{proof}[Proof of \Cref{claim:Slim_attracting}]
        \renewcommand{\qedsymbol}{$\triangle$}
        Write $s=b_1s_1+\ldots+b_ms_m$ for some nonnegative integers $b_1,\ldots,b_m$. 
        Any interior point of $\P(KR_\lambda)$ can be represented by an interior point of $KR_\lambda$.
        Let $t$ be an interior point of $KR_\lambda$. We will find $t' \in s+KR_\lambda$ such that $\bar{t}=\bar{t'}$. Write $t = a_1s_1 + \ldots + a_ms_m$. Then $a_1,\ldots,a_m >0$, because $t$ does not lie on any face of $KR_\lambda$. Therefore, there exists a large $R$ such that $Ra_i > b_i$ for every $i=1,\ldots,m$.
        Then we claim that
        \[
            Rt = Ra_1s_1 + \ldots + Ra_ms_m \in s+KR_\lambda.
        \]
        Indeed, $Rt-s = \sum_{i=1}^m(Ra_i-b_i)s_i$ has nonnegative coefficients, so $Rt-s \in KR_\lambda$. Therefore, $\bar{Rt} \in \P(s+KR_\lambda)$ and $\bar{Rt}=\bar{t} \in \textrm{int}(\P(KR_\lambda))$, as desired.
        \end{proof}
        
        Now we prove \Cref{claim:Slim_subclaim}. Note the leading eigenvector $\bar{v_1}$
        is the unique attracting fixed point in $\P(KR_\lambda)$ under the $\lambda$-multiplication action. \Cref{claim:Slim_attracting} implies that $\bar{v_1} \in \P(s + KR_\lambda)$. Then $\{\bar{\lambda^n \cdot s_i}\}_{n \in \Z^+}$ converges to $\bar{v_1}$. Hence, eventually for some large $N_s \in \Z^+$, we can say whenever $n\ge N_s$, it follows that $\bar{\lambda^n \cdot s_i}$ is arbitrarily close to $\bar{v_1}$ in $\P(s+KR_\lambda)$. Together with the fact that $\lambda^n \cdot s_i \in S_\lambda$ for all $n>0$, we obtain
        \[
        n \ge N_s \qquad \Longrightarrow \qquad \lambda^n \cdot s_i \in s+S_\lambda,
        \]
        proving \Cref{claim:Slim_subclaim} and concluding the proof.
    \end{proof}
    
    \subsection{Uniformly $\lambda$-expanding Star Map -- The proof of \Cref{thm:astmapPerron}}\label{ssec:AST_Uniform_Expanding}
        
In this section, we construct a uniformly $\lambda$-expanding star map for any Perron number $\lambda$ to prove \Cref{thm:astmapPerron}, which we restate here:
\setcounter{THME}{3}
\begin{THME}[Uniformly Expanding Star Maps for Perrons]\label{thm:perronexpander}
    Let $\lambda$ be a Perron number. Then there exists $n > 0$ and a star map $f: \ast_n \to \ast_n$ such that $f$ is a uniform $\lambda$-expander with mixing incidence matrix.
\end{THME}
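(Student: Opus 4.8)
The plan is to realize multiplication by $\lambda$ on a ``deep'' part of the lattice $\cO_\lambda$ as the edge-image rule of a star map: the edges of $\ast_n$ will be indexed by lattice points lying far inside the rational cone $KR_\lambda$, edge lengths will be read off by the leading-eigenline functional, and the image of an edge will be the center-to-leaf path that ``spells out'' $\lambda$ times its index. Concretely, let $\phi\colon V_\lambda\to\R$ be the linear functional giving the coordinate along the (unit) leading eigenvector $v_1$; by \Cref{lem:CompanionMatrixMult} one has $\phi(\lambda\cdot w)=\lambda\,\phi(w)$, and by definition $\phi>0$ on $K_\lambda\setminus\{0\}\supseteq S_\lambda$. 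Write $S_\lambda=\langle s_1,\dots,s_m\rangle$ by \Cref{prop:Gordan}, and note $\lambda\cdot S_\lambda\subseteq S_\lambda$ since $\lambda\cdot KR_\lambda\subseteq\lambda\cdot K_\lambda\subseteq KR_\lambda$. Since multiplication by $\lambda$ is well defined on the finite ring $\cO_\lambda/2\cO_\lambda$, the Even Lemma \Cref{lem:even} shows the sequence $(\lambda^{k}\bmod 2\cO_\lambda)_{k\ge 0}$ is eventually periodic; let $N_*$ be a pre-period and $P$ a period, so $\lambda^{k+P}\equiv\lambda^{k}\pmod{2\cO_\lambda}$ for all $k\ge N_*$. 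Fix $n_0\ge N_*$ with $\gcd(n_0,P)=1$, taking $n_0$ as large as the estimates below require; this is possible since past the pre-period every exponent is periodic.

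Now let $f\colon\ast_n\to\ast_n$ have edges $e_{j,k}$ for $(j,k)\in\{1,\dots,m\}\times\{0,\dots,n_0+P-1\}$, so $n=m(n_0+P)$, with $\ell(e_{j,k}):=\phi(\lambda^{k}s_j)=\lambda^{k}\phi(s_j)>0$. For $k<n_0+P-1$ set $f(e_{j,k})=e_{j,k+1}$, a single edge of length $\lambda\,\ell(e_{j,k})$. For $k=n_0+P-1$, put $\delta_j:=\lambda^{n_0+P}s_j-\lambda^{n_0}s_j$; by the chosen periodicity $\delta_j\in 2\cO_\lambda$, and since for $n_0$ large the operator $C_\lambda^{n_0}$ contracts every non-leading eigencoordinate relative to the leading one (whose value on $\lambda^{P}s_j-s_j$ is the positive number $(\lambda^{P}-1)\phi(s_j)$), the vector $u_j:=\tfrac12\delta_j\in\cO_\lambda$ lies deep inside $KR_\lambda$, hence $u_j\in\cO_\lambda\cap KR_\lambda=S_\lambda\cup\{0\}$. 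By the Slim Cone Lemma \Cref{prop:SlimCone} (equivalently, a sufficiently deep point of $KR_\lambda$ stays in $KR_\lambda$ after subtracting the fixed vector $s_1+\cdots+s_m$), we may pick a decomposition $u_j=\sum_l c^{(j)}_l s_l$ with every $c^{(j)}_l\ge 1$. Define $f(e_{j,n_0+P-1})$ to be the center-to-leaf path making $c^{(j)}_l$ out-and-back excursions along $e_{l,0}$ for each $l$ and then one final excursion out along $e_{j,n_0}$. Its edge-multiplicity vector records $e_{l,0}$ with the even multiplicity $2c^{(j)}_l$ and $e_{j,n_0}$ with multiplicity $1$; as $n_0\ge 1$ these are distinct edges, exactly one multiplicity is odd, and the combinatorial length is odd, so $f$ fixes the center and preserves the bipartite structure. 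Its total length is $\sum_l 2c^{(j)}_l\phi(s_l)+\lambda^{n_0}\phi(s_j)=\phi\bigl(2u_j+\lambda^{n_0}s_j\bigr)=\phi(\lambda^{n_0+P}s_j)=\lambda\,\ell(e_{j,n_0+P-1})$, so $f$ is uniformly $\lambda$-expanding (and $h(f)=\log\lambda$ by \Cref{prop:entropyOfUniformMaps}). Finally, fixing any permutation $\pi$ of $\{1,\dots,m\}$ and ordering the excursions so $f(e_{j,n_0+P-1})$ begins along $e_{\pi(j),0}$ (legal since $c^{(j)}_{\pi(j)}\ge 1$), the first-edge map becomes $(j,k)\mapsto(j,k+1)$ for $k<n_0+P-1$ and $(j,n_0+P-1)\mapsto(\pi(j),0)$, a bijection of the edge set; hence $f$ is a star map.

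For mixing, consider the support digraph of the transition matrix: the only arc out of $(j,k)$ with $k<n_0+P-1$ is $(j,k)\to(j,k+1)$, while $(j,n_0+P-1)$ has arcs to $(j,n_0)$ and to every $(l,0)$. From any vertex one climbs to level $n_0+P-1$, jumps to an arbitrary $(l,0)$, and repeats, so the digraph is strongly connected; through $(j,n_0+P-1)$ it carries a cycle of length $P$, namely $(j,n_0+P-1)\to(j,n_0)\to\cdots\to(j,n_0+P-1)$, and a cycle of length $n_0+P$, namely $(j,n_0+P-1)\to(j,0)\to\cdots\to(j,n_0+P-1)$, and $\gcd(P,n_0+P)=\gcd(P,n_0)=1$. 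So the (nonnegative integer) transition matrix is irreducible and aperiodic, i.e.\ some power is positive, i.e.\ it is mixing; Perron--Frobenius \Cref{thm:PF_Theorem} then pins its leading eigenvalue to $\lambda$ via the positive length eigenvector. This gives the claimed $f$.

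The main obstacle is juggling the three requirements on the edge-image rule: it must be a legal center-to-leaf path (exactly one edge traversed an odd number of times), the first-edge map must be an honest permutation, and the transition matrix must be mixing. Parity is exactly what forces the Even Lemma, and here the \emph{corrected} statement with a general base exponent $n_0\ge 1$ is indispensable --- for some Perron $\lambda$ no power is $\equiv 1\pmod{2\cO_\lambda}$, so Thurston's $n_0=0$ fails (see \Cref{rmk:n0error}). Reconciling the permutation condition with mixing is what dictates the ``strand'' bookkeeping above: choosing $n_0$ both large and coprime to $P$, and invoking the cone geometry / Slim Cone Lemma so that each $u_j$ admits a decomposition in which every generator occurs. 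The remaining points --- the eigencoordinate contraction estimate that places $u_j$ deep in $KR_\lambda$, and checking distinctness of the edge data --- are routine but must be handled carefully.
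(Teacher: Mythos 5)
Your proof is correct and follows the paper's overall blueprint (Even Lemma for parity, the $KR_\lambda$/$S_\lambda$/Gordan/Slim Cone apparatus for the recipe, and star edges organized into $m$ strands of successive $\lambda$-powers of the generators $s_j$), but the recipe and the mixing argument are genuinely different in two linked ways. The paper takes the ``even buffer'' $g_k = \lambda^{n_0}s_k + 2(T + \lambda\cdot s_k)$ with $T = s_1+\cdots+s_m$, so its identity $(\dagger)$ carries an extra summand $2\lambda\cdot s_k$, realized as two excursions over the edge $(s_k,2)$; that extra arc gives the transition digraph cycles of lengths $M-1$ and $M$, so aperiodicity is automatic and the paper simply checks $A_\lambda^{M^2}>0$. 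You use the plainer buffer $T$, obtaining $\lambda^{n_0+P}s_j = \sum_l 2c^{(j)}_l s_l + \lambda^{n_0}s_j$ and a leaner star, but then only cycles of lengths $P$ and $n_0+P$ survive at that vertex, so you must impose $\gcd(n_0,P)=1$ by hand when choosing $n_0$ --- an extra condition the paper avoids. Your pass to $u_j = \delta_j/2\in\cO_\lambda$ and the cone-geometric check $u_j - T\in S_\lambda$ is a cleaner packaging of what the paper does with its somewhat elliptical ``linear combination with even coefficients'' phrasing; both ultimately rest on $S_\lambda\cup\{0\}=\cO_\lambda\cap KR_\lambda$. One precision point: since $(\lambda^P-1)s_j/2$ need not lie in $\cO_\lambda$, your $u_j$ is not of the form $\lambda^n\cdot s'$ with $s'\in S_\lambda$, so \Cref{prop:SlimCone} does not apply verbatim; the parenthetical remark you make (a sufficiently deep point of $KR_\lambda$ remains in $KR_\lambda$ after subtracting the fixed $T$) is what actually carries the argument, and it is the same projective-convergence reasoning that underlies the proof of that lemma, so the substance is fine, but you should invoke that geometric fact rather than the lemma as stated. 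Finally, your length functional $\phi$ (the $v_1$-coordinate) and the paper's real embedding of $\Q(\lambda)$ agree up to a global scalar on $\Q(\lambda)$, so either normalization works.
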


\begin{proof}
Use \Cref{lem:even} to find $N,n_0 \in \Z^+$ with $N>n_0$ such that $\lambda^{N}\ \equiv \lambda^{n_0} \mod 2\cO_\lambda$.
Recall by \Cref{prop:Gordan}, we can write $S_\lambda = \<s_1,\ldots,s_m\>$. Set $T= s_1+\ldots+s_m$, and for each $k =1,\ldots,m$ define:
\[
    g_k = \lambda^{n_0} \cdot s_k + 2(T+\lambda \cdot s_k) \in S_\lambda. 
\] 
For each $k=1,\ldots,m$, \Cref{prop:SlimCone} with $s=g_k$ yields $N_k \in \Z^+$ such that $n \ge N_k$ implies that $\lambda^n \cdot s_k \in g_k + S_\lambda$. 

Now, letting $M:=p(N-n_0)+N$, where $p$ is sufficiently large integer so that $M \ge \max\{N_1,\ldots,N_m\}$, we have that
$ \lambda^M \cdot s_k \ \in \ g_k + S_\lambda,$
for all $k=1,\ldots,m$. We claim that we also have that $\lambda^{M} \equiv \lambda^{n_0} \mod 2\cO_\lambda$ since
\begin{align*} 
     \lambda^{p(N-n_0) + N} &\equiv \lambda^{p(N - n_0) + n_0} \\
     &\equiv \lambda^{pN-(p-1)n_0}  \\
     &\equiv \lambda^{(p - 1)(N - n_0)+N}\\
     &\phantom{{}\equiv{}}\vdots\\
        &\equiv \lambda^{(p - 2)(N - n_0)+N}\\
         &\phantom{{}\equiv{}}\vdots\\
     &\equiv \lambda^{N} \equiv \lambda^{n_0} \mod 2\cO_\lambda.
\end{align*}

Hence, for each $k = 1,\ldots,m$,
\[
    \lambda^M \cdot s_k - \lambda^{n_0} \cdot s_k \in 2(T+\lambda \cdot s_k) + S_\lambda,
\]
where the left hand side is $(\lambda^M - \lambda^{n_0})\cdot s_k \equiv 0 \mod 2\cO_\lambda$. This implies that $(\lambda^M - \lambda^{n_0})s_k$ is written as a linear combination of the $\{s_i\}_{i=1}^m$ with \emph{even} coefficients. However, as $2(T+\lambda \cdot s_k)$ already has even coefficients so that for each $k =1,\ldots,m$, there must exist $e_1^{(k)},\ldots,e_m^{(k)} \in \Z_{\ge 0}$ such that
\begin{align*}
    (\lambda^M - \lambda^{n_0})\cdot s_k &= 2(T+\lambda\cdot s_k) + \sum_{i=1}^m 2e_i^{(k)}s_i \\
                                       &= 2\lambda \cdot s_k + \sum_{i=1}^m (2e_i^{(k)}+2)s_i.
\end{align*}
Thus, we obtain the following key identity that will be used to construct the desired uniformly $\lambda$-expanding star map:
\begin{equation} \label{eq:recipe} \tag{$\dagger$}
    \lambda^M \cdot s_k = \left[\sum_{i=1}^m (2e_i^{(k)}+2)s_i\right] + (2\lambda \cdot s_k) + (\lambda^{n_0} \cdot s_k) , \qquad \text{ for } k=1,\ldots,m.
\end{equation}

Now we begin the construction of the star map using \Cref{eq:recipe}. Consider a star graph $\ast_{mM}$ with $mM$ tips. For $k=1,\ldots, m$ and $i = 1,\ldots,M$, label each edge by $(s_k,i)$ and set its length to be: 
\[
\|(s_k,i)\| = \lambda^{i-1}\cdot s_k \in S_\lambda \subset \R^+,
\]
where the last containment comes from the usual embedding $S_\lambda = (KR_\lambda \cap \cO_\lambda)\setminus\{\mathbf{0}\} \hookrightarrow (KR_\lambda \cap \Q(\lambda))\setminus \{\mathbf{0}\} \hookrightarrow \R^+$.

Now define the star map $f_{\lambda}:\ast_{mM} \to \ast_{mM}$ by mapping each edge $(s_k,i)$ to $(s_k,i+1)$ when $i<M$ and mapping $(s_k,M)$ to an edge path as follows:
\[
    (s_k,M) \quad\overset{f_\lambda}{\longmapsto}\quad
    (2e^{(k)}_{k}+2)(s_{k},1)\sqcup \bigsqcup_{i\neq k}(2e_i^{(k)}+2)(s_i,1) \sqcup 2(s_k,2) \sqcup (s_{k},n_{0}+1)
\]

More precisely, $f_\lambda$ sends each edge $\{(s_k,i)\}_{i=1}^{M-1}$ to the next edge corresponding to $s_{k}$, and sends the $M$-th edge $(s_k,M)$ to an edge path traversed in the following order:

\begin{enumerate}[(i)]
   \item First, it maps $2e_k^{(k)}+2$ times over the edge $(s_k,1)$,
    \item next, for each $j \neq k$ it goes $2e_j^{(k)}+2$ times over the edge $(s_j,1)$,
    \item then, it maps 2 times over the edge $(s_k,2)$,
    \item and finally, it maps over the edge $(s_{k},n_0+1)$ once.
\end{enumerate}

The choices of lengths of edges in $\ast_{nM}$ guarantee that $f_{\lambda}$ is uniformly $\lambda$-expanding. We immediately have
$\|f_\lambda ((s_k,i))\| = \|(s_k,i+1)\| = \lambda^i s_k = \lambda \|(s_{k},i)\|$ for $1 \le i < M$. For $i = M$, by \Cref{eq:recipe} we have
\[
\|f_\lambda((s_k,M))\| =\left[\sum_{i=1}^m (2e_i^{(k)}+2)s_i\right] + (2\lambda \cdot s_k) + (\lambda^{n_0} \cdot s_k)= \lambda^M s_k = \lambda\|(s_{k},M)\|.
\]

Now we verify that $f_\lambda$ is a star map with mixing incidence matrix. First, the fact that $f_\lambda$ traverses all edges but one an even number of times implies that $f_\lambda$ fixes the center vertex (or equivalently, preserves the bipartite structure).
By construction it follows that every edge is the first element of the image edge path of some edge (i.e., the first edge map $f_1$ is a permutation).
 
Finally, to see that $f_\lambda$ has mixing incidence matrix, we observe that for each edge $(s_k,i)$ (regarding $i$ as an integer modulo $M$):
\begin{align*}
    f_\lambda^M((s_k,i)) &\supset \bigsqcup_{k=1}^m (s_k,i) \\
    f_\lambda^{2M}((s_k,i)) &\supset \bigsqcup_{k=1}^m (s_k,i) \cup \bigsqcup_{k=1}^m (s_k,i+1) \\
    f_\lambda^{3M}((s_k,i)) &\supset \bigsqcup_{k=1}^m (s_k,i) \cup \bigsqcup_{k=1}^m (s_k,i+1) \cup \bigsqcup_{k=1}^m (s_k,i+2)\\
    &\vdots \\
    f_\lambda^{M^2}((s_k,i)) &\supset \bigsqcup_{i=1}^M\bigsqcup_{k=1}^m (s_k,i) = \ast_{mM},\\
\end{align*}
which implies that $f_\lambda$ has mixing incidence matrix. Indeed, denoting by $A_\lambda$ the incidence matrix of $f_\lambda$, its power $A_\lambda^{M^2}$ is a positive matrix by the above observation. This concludes the proof.
\end{proof}

The fact that the incidence matrix for $f_\lambda$ is mixing will be used to prove \Cref{LEM:PerronMixing}, which states that the incidence matrix of its \emph{split} $S(f_\lambda)$ is as well.

Next we expand \Cref{thm:astmapPerron} to \emph{weak Perron numbers} via the following lemma.
    
\begin{LEM}[Uniformly Expanding Star Maps for Weak Perrons]
\label{lem:WeakPerron_Expanding_Ast}
    Let $\lambda$ be a weak Perron number. Then there exists $k > 0$ and a star map $f: \ast_k \to \ast_k$ such that $f$ is a uniform $\lambda$-expander.
\end{LEM}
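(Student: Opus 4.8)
The plan is to reduce the weak Perron case to the strong Perron case already handled by \Cref{thm:perronexpander}, using \Cref{prop:PnWP}. Since $\lambda$ is weak Perron, there exists $N \in \Z^+$ such that $\lambda^N$ is a (strong) Perron number. Applying \Cref{thm:perronexpander} to $\lambda^N$ produces a star $\ast_n$ and a uniformly $(\lambda^N)$-expanding star map $g : \ast_n \to \ast_n$ (in fact with mixing incidence matrix, though we only need the expansion property here). The idea is then to ``take an $N$-th root'' of $g$: build a new star whose edges are organized into $n$ blocks of $N$ edges each, where within each block $f$ cyclically shifts the edges and scales lengths by $\lambda$, so that traversing a whole block of length $N$ reproduces one application of $g$ scaled by $\lambda^N$.

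Concretely, I would take the star $\ast_{nN}$ with edges labeled $(e, j)$ for $e$ an edge of $\ast_n$ and $j \in \{1,\ldots,N\}$. Pick a length function: set $\|(e,j)\| = \lambda^{j-1}\ell(e)$ where $\ell$ is the length function making $g$ uniformly $\lambda^N$-expanding. Define $f : \ast_{nN} \to \ast_{nN}$ to fix the center, to send $(e,j) \mapsto (e, j+1)$ for $1 \le j < N$, and to send $(e, N)$ to the edge path obtained from $g(e)$ by replacing each occurrence of an edge $e'$ (with orientation) in the edge path $g(e)$ by the edge $(e', 1)$ (with the same orientation). Then for $1 \le j < N$ we have $\|f((e,j))\| = \|(e,j+1)\| = \lambda^j \ell(e) = \lambda\|(e,j)\|$, and for $j = N$, $\|f((e,N))\| = \sum_{e' \text{ in } g(e)} \ell(e') = \lambda^N \ell(e) = \lambda \cdot \lambda^{N-1}\ell(e) = \lambda\|(e,N)\|$, using that $g$ is uniformly $\lambda^N$-expanding. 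So $f$ is a uniform $\lambda$-expander. It remains to check $f$ is a genuine star map: it fixes the center by construction, and the first-edge map of $f$ is a permutation of the $nN$ edges because for $1 \le j < N$ the first edge of $f((e,j))$ is $(e,j+1)$, and the first edge of $f((e,N))$ is $(e_1, 1)$ where $e_1$ is the first edge of $g(e)$ — since the first-edge map of $g$ is a permutation of the edges of $\ast_n$, this assembles into a permutation of all $nN$ edges.

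One point that needs a little care is the bipartite/parity condition: a star map must send each edge to an edge path of odd unsigned length so that the center vertex is preserved. For $1 \le j < N$, $(e,j)$ maps to a single edge, which has odd length, fine. For $(e,N)$, the image has unsigned length equal to the unsigned length of $g(e)$ as an edge path in $\ast_n$; since $g$ is a star map this is odd, so we are fine. Actually the cleanest way to phrase this is: the edge path $f((e,N))$ passes through the center vertex the same number of times as $g(e)$ does, so $f((e,N))$ starts and ends at the center iff $g(e)$ does, which holds because $g$ is a star map.

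I expect the main (modest) obstacle to be purely bookkeeping: carefully tracking orientations when rewriting the edge path $g(e)$ into an edge path in $\ast_{nN}$, and confirming that the resulting $f$ is taut-compatible / well-defined as a star map rather than needing any homotopy adjustment. There is no deep content here — unlike the Perron case we do not need the Even Lemma, Gordan's Lemma, or the slim cone machinery, since all of that was already absorbed into the construction of $g$ for $\lambda^N$. Note also that we do not claim $f$ has mixing incidence matrix (and indeed we need not, since \Cref{lem:WeakPerron_Expanding_Ast} only asserts the uniform expansion property); the mixing property was needed only in the strong Perron case to later control the split map's incidence matrix.
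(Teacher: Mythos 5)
Your proposal is correct and is essentially identical to the paper's construction: both reduce to a $\lambda^N$-expanding star map $g$ on $\ast_n$ via \Cref{prop:PnWP} and \Cref{thm:astmapPerron}, then take the ``$N$-th root'' by gluing $N$ copies of $\ast_n$ into $\ast_{nN}$ with edge lengths scaled by powers of $\lambda$, shifting blocks cyclically and routing the last block through $g$ (your index $(e,j)$ corresponds exactly to the paper's fan $C^{j-1}$). The paper also omits a mixing claim here, consistent with your observation.
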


\begin{proof}
    If $\lambda$ is a weak Perron number, then by \Cref{prop:PnWP}, there exists some $N$ such that $\lambda^N$ is a Perron number. Then by \Cref{thm:astmapPerron} we can construct a star map $f:\ast_n \to \ast_n$ for some $n >0$, which is $\lambda^N$-uniformly expanding.
    
    Now take $N$-copies $C^0,\ldots,C^{N-1}$ of $\ast_n$, and glue them along the center vertices to form a star $\ast_k=\ast_{Nn}$ with $Nn$ tips.
    
    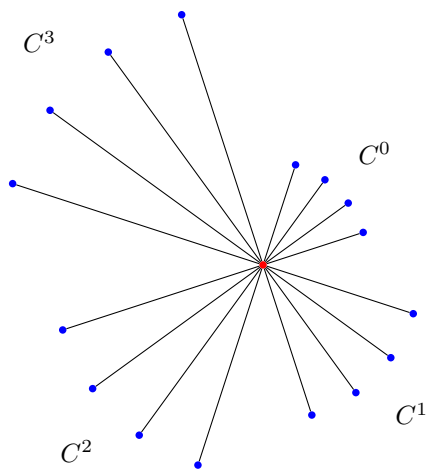
\begin{figure}[htbp]
        \centering
        \begin{tikzpicture}[scale=.7]
            \node[circle,fill=red,scale=0.3] at (360:0mm) (center) {};
            \foreach \n in {1,...,20}{
                \pgfmathparse{Mod(\n,5)==0?1:0}
                \ifnum\pgfmathresult=0
                    \pgfmathtruncatemacro\k{((\n/5)+2)}
                    \node[circle,fill=blue,scale=0.3] at ({-(\n)*360/20+90}:\k cm)     (n\n) {};
                    \draw (center)--(n\n);
                \else
                    \pgfmathtruncatemacro\m{\n/5-1}
                    \pgfmathtruncatemacro\l{\m+3}
                    \node at ({(7.5-\n)*360/20}:\l cm) {$C^{\m}$}; 
                \fi
            }
        \end{tikzpicture}
        \caption{Gluing four copies of $\ast_4$ to form $\ast_{16}$. The length of each edge in $C^{i+1}$ is given by multiplying by $\lambda$ to the corresponding edge in $C^{i}$.}
        \label{fig:glued_asterisks}
    \end{figure}

    Decide the lengths on each edge in $C^0$ according to the construction of $\ast_n$ from \Cref{thm:astmapPerron}.
    Then for $i=1,\ldots,N-1$, set the edge length of edges in $C^i$ to be exactly $\lambda^{i}$ times the length of the corresponding edge of $C^0$. Now define $f_N:\ast_{k}\to \ast_{k}$ as follows. For $0 \le i \le N-2$, send each edge of $C^i$ to the corresponding edge in the next copy, $C^{i+1}$, i.e. $f_N$ simply shifts $C^i$ to $C^{i+1}$. Then by construction, $f_N$ is uniformly $\lambda$-expanding on $C^0,\ldots,C^{N-2}$.
    Next, to make $f_N$ uniformly $\lambda$-expanding on $C^{N-1}$ as well, define $f_N$ by mapping each edge of $C^{N-1}$ to an edge path of $C^0$ following the recipe given by the map $f:\ast_N \to \ast_N$ in \Cref{thm:astmapPerron}, which is $\lambda^N$-uniformly expanding. In particular, we apply $f$ to the edge of $C^{N-1}$, uniformly expanding the length of the edge by $\lambda^N$, and then shift the image of edge path to corresponding edge path in $C_0$, which shrinks the length of the edge path by $\lambda^{N-1}$. Therefore, each edge $e$ in $C^{N-1}$ will be mapped to an edge path in $C^0$ whose length is $(\lambda^N)/\lambda^{N-1}=\lambda$ times the length of the edge $e$ in $C^{N-1}$.
  Given a weak Perron number $\lambda$, this concludes the construction of  $f_N:\ast_{k} \to \ast_{k}$ which is uniformly $\lambda$-expanding.
\end{proof}

\section{Prototype Graph}\label{sec:Prototypes}\vspace{1em}
    \subsection{Constructing the prototype maps}
    \label{ssec:PRO_Definition}
Let $P_7$ denote the bipartite graph with two 
vertices $v_0, v_1$ and seven edges as in \Cref{fig:thurstonttmap}. We will refer to this graph as the \textbf{prototype graph}. Orient the edges
according to the bipartite structure, \eg with initial
vertex $v_0$ and terminal vertex $v_1$; let 
$a, b, \ldots, g$ label the edges with respect to this 
orientation, and  $A, B,\ldots, G$ 
 denote the opposite orientation.  
We endow $P_7$ with a traintrack structure as 
shown in \Cref{fig:thurstonttmap}. Note that all of the turns between the $a,b,$ and $c$ edges are legal at both of the vertices.

\begin{figure}[ht!]
	      \centering
  \makebox[\textwidth][c]{\scalebox{1}{\import{pics/}{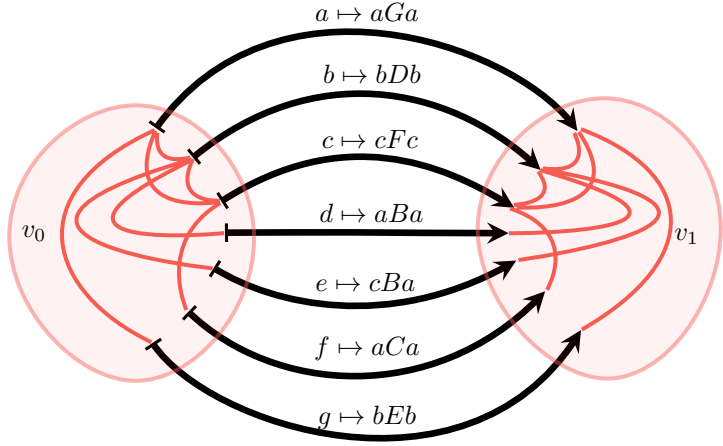}}}
		    \caption{The traintrack structure on $P_{7}$ from Thurston's original paper \cite{Thurston2014Entropy}.  The red paths between edges indicate the legal turns. }
	    \label{fig:thurstonttmap}
\end{figure}

Let $\phi_1$ be the identity map on $P_7$. Now we define a set of traintrack maps $\{\phi_{3+2m}\}_{m=0}^\infty$ on the prototype graph $P_7$.
For $m \geq 0$ let 
$\phi_{3 + 2m} : P_7 \to P_7$ be the
taut graph map defined as follows:

\begin{equation*}
\phi_{3 + 2m} : \begin{cases}
a &\longmapsto \quad aG(aB)^ma, \\
b &\longmapsto \quad bD(bC)^mb, \\
c &\longmapsto \quad cF(cA)^mc, \\
d &\longmapsto \quad aB(aB)^ma, \\
e &\longmapsto \quad cB(aB)^ma, \\
f &\longmapsto \quad aC(aB)^ma, \\
g &\longmapsto \quad bE(bA)^mb.
\end{cases}
\end{equation*}

Since edges are mapped to paths of odd length, each $\phi_{3+2m}$ preserves the bipartite structure of 
$P_7$, and in fact fixes the vertices $v_0,v_1$ pointwise. We will call these maps $\{\phi_{n}\}$, for $n$ odd and positive, the \textbf{prototype maps}.

We claim the prototype maps are all traintrack maps. 
Since $\phi_{1}$ is the identity map it is a traintrack map. One can then check that for $\phi_{3+2m}$ every legal turn is sent to a legal turn. Note that in determining the action of a map on turns we only need to consider the first and last edges of the image edge-paths. This allows us to check that $\phi_{3+2m}$ is a traintrack map for all $m \ge 0$.  For example, we check $\phi_{3+2m}$ maps the legal turns at $v_{0}$ that involve the $a$ edge to legal turns at $v_0$.
\begin{align*}
    Ba &\mapsto Ba \\
    Ca &\mapsto Ca \\
    Ga &\mapsto Ba.
\end{align*}
Now one can check using \Cref{fig:thurstonttmap} that each edge is mapped to a legal path. This check is possible for all $m \ge 0$ since at most only \emph{four} distinct turns occur in the image of each edge. For example, $\phi_3(a)$ consists of two kinds of turns $aG,Ga$ and $\phi_{3+2m}(a)$ with $m\ge 1$ consists of four kinds of turns: $aG,Ga,aB$ and $Ba$, which are all legal as shown in \Cref{fig:thurstonttmap} and this is independent of the value of $m \ge 0$. Lastly, we note that the prototype maps are indeed taut since they are local embeddings on the interiors of edges. Again, this is independent of $m$ and is checked directly by noting that the image of each edge is \emph{reduced} as a word in the free group $\pi_1(P_7)$.

\subsection{Homotopy equivalence via folding.}

In this section we prove that the prototype maps are in fact homotopy equivalences, and therefore, induce automorphisms of $\pi_1(P_7) \cong \F_6$, the free group of rank 6.

\begin{LEM} \label{lem:prototypehe}
    All of the prototype maps $\{\phi_{n}\}$, for $n$ positive and odd, are homotopy equivalences.
\end{LEM}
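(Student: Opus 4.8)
The plan is to apply the Stallings fold criterion recorded just after \Cref{thm:stallings}. The map $\phi_1=\mathrm{id}$ is trivially a homotopy equivalence, so fix $m\ge 0$ and put $\phi\dfn\phi_{3+2m}$. First I would subdivide $P_7$ at the full preimage $\phi^{-1}(\{v_0,v_1\})$: since $\phi$ sends each edge to an edge path of length $2m+3$ whose interior vertices alternate between $v_0$ and $v_1$, this produces a subdivided graph $\widehat P_7$ (homotopy equivalent to $P_7$) on which $\phi$ becomes a graph morphism $\widehat\phi\colon\widehat P_7\to P_7$ sending each small edge to a single edge. By \Cref{thm:stallings}, $\widehat\phi$ factors as a sequence of folds followed by an immersion $\psi$, and by the criterion it suffices to show that every fold is Type I and that $\psi$ is a graph isomorphism onto $P_7$; then $\widehat\phi$, hence $\phi$, is a homotopy equivalence and therefore induces an automorphism of $\pi_1(P_7)\cong\F_6$.

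To carry out the folding I would exploit that $\phi$ fixes $v_0$ and $v_1$ and is taut, so the only failures of local injectivity of $\widehat\phi$ occur at $v_0$, at $v_1$, and at subdivision vertices that become identified along the way. Organize the folds as follows. At $v_0$ the initial segments group by the first letter of the image — $\{a,d,f\}$ all begin with $a$, $\{b,g\}$ with $b$, $\{c,e\}$ with $c$ — and the first segments within each group are folded together; within a group the second letters ($G,B,C$; $D,E$; $F,B$) are distinct, so no further fold is immediately forced there. At $v_1$ the terminal segments are folded according to the maximal common suffix of the images: $\phi(a),\phi(d),\phi(e),\phi(f)$ all end in the common suffix $(aB)^m a$ of length $2m+1$, so their last $2m+1$ segments are folded together, while $\phi(b)$ and $\phi(g)$ share only the final letter $b$, and $\phi(c)$ ends in $c$ alone. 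Performing these folds creates new coincidences — for instance, at the vertex where the common suffix $(aB)^m a$ is attached, the penultimate segments of the prefixes $aB$ of $\phi(d)$ and $cB$ of $\phi(e)$ both map to the direction $B$ — which forces a cascade of further Type I folds. I would check that this cascade terminates at a graph with exactly two vertices and seven edges, on which $\psi$ restricts to a bijection on each vertex link, hence is the desired isomorphism onto $P_7$.

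The main obstacle is precisely this bookkeeping: verifying at each step that the two folded edges share exactly one endpoint, so that the fold is Type I rather than rank-dropping Type II (which in each case follows from tautness of $\phi$ together with the fact that the images of distinct edges of $P_7$ are distinct reduced words), and confirming that the terminal immersion is genuinely bijective rather than a proper immersion or a nontrivial cover. For the latter, note that a Type I fold decreases both the number of vertices and the number of edges by one, so $V-E=2-7(2m+3)$ is preserved; once the cascade reaches an immersion with seven edges it automatically has two vertices, and a link-injective morphism from a two-vertex, seven-edge graph to $P_7$ is an isomorphism. The remaining subtlety is uniformity in $m$: I would handle it by observing that every image word has the uniform shape $(\text{length-}2\text{ prefix})\cdot(xY)^m\cdot x$, so the periodic middle blocks fold into embedded arcs and the combinatorially interesting part of the cascade is independent of $m$; it then suffices to run the argument for $\phi_3$ and note that larger $m$ only lengthens the arcs being collapsed. (Thurston instead argued algebraically that $\phi_*$ is onto $\pi_1(P_7)$; the fold argument is more transparent and self-contained.)
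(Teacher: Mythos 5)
Your plan is the same as the paper's: subdivide $P_7$ at the full preimage of its vertices, factor via \Cref{thm:stallings}, verify that every fold is Type~I, and check that the terminal immersion $\psi$ is a graph isomorphism. Two details need correcting. The Euler-characteristic bookkeeping is off: the subdivided graph has $16+14m$ vertices and $14m+21$ edges, so $V-E=-5$ (which equals $\chi(P_7)$, as it must, since subdivision is a homotopy equivalence), not $2-7(2m+3)$; with the correct value $V-E=-5$, seven edges does force two vertices, and the rest of your finishing argument (no loops in $P_7$, so the terminal graph is again a two-vertex, seven-edge bipartite graph, on which a link-injective morphism to $P_7$ is a bijection on each link) is fine. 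The uniformity-in-$m$ remark is also imprecise: the periodic middle blocks $(aB)^m$, $(cB)^m$, $(aC)^m$ do not fold onto embedded arcs --- after the initial folds at $v_0$ and $v_1$ they \emph{wrap} $m$ times around short loops ($aB$, $cB$, $aC$) created by those earlier folds. This is precisely what the paper's ``wrapping'' maps $F_5$, $F_8$, $F_9$, $F_{10}$ do, each a chain of $O(m)$ Type~I folds, and the wrapping phenomenon has no analogue at $m=0$, so running the argument for $\phi_3$ alone does not exhibit the full cascade. The prefix structure \emph{is} uniform in $m$, though, so once the wrapping mechanism is identified the remaining bookkeeping is mechanical; the paper treats $\phi_3$ and $\phi_{3+2m}$, $m\ge 1$, as separate cases for exactly this reason.
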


\begin{proof}
    The map $\phi_{1}$ is the identity map and hence a homotopy equivalence. We will check directly using Stalling folds that $\phi_{3+2m}$ is a homotopy equivalence for $m\geq 0$.  Recall that if we perform only Type I folds while decomposing a graph map \'{a} la Stallings and the resulting immersion, $\psi$, is a homotopy equivalence,
    then the original map is a homotopy equivalence. This argument is largely a ``proof by picture" and so we direct the reader to \Cref{fig:compiled_phi3} throughout the proof. We first show that $\phi_{3}$ is a homotopy equivalence. Begin by subdividing the edges of the domain graph via the full pre-image of the vertices. Then $\phi_{3}$ is described by \Cref{fig:compiled_phi3}. Note the edges $d,e,f,g$ are in black as they will never be folded.
    
    \begin{figure}[ht!]
    \centering
    \makebox[\textwidth][c]{
    \includegraphics[width=\textwidth]{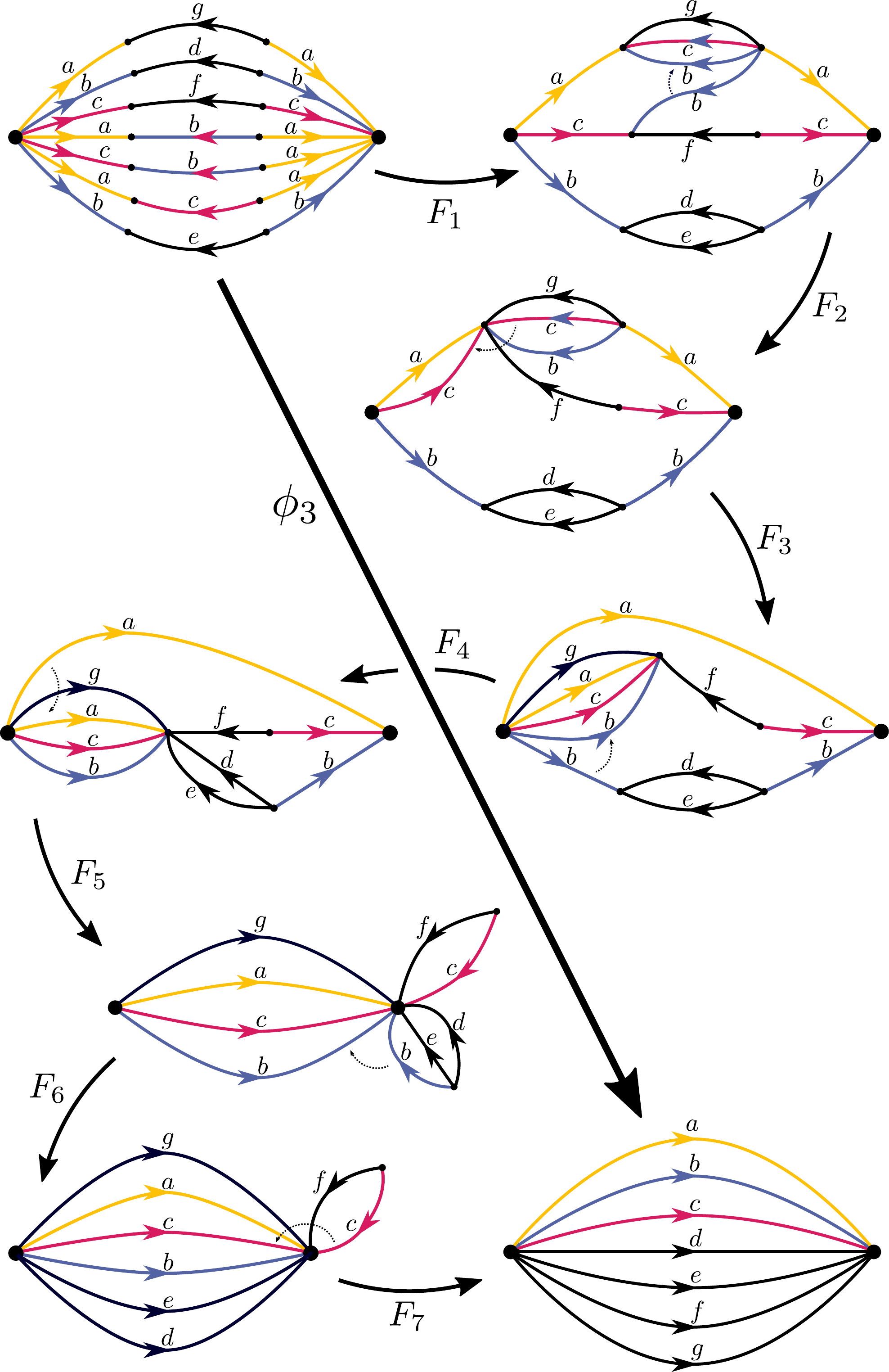}}
    \caption{Folding $\phi_3$. All folds $F_1,\ldots,F_7$ are of Type I.}
    \label{fig:compiled_phi3}
\end{figure} 
    
   We begin with all of the possible folds on edges adjacent to the left- and rightmost vertices. That is to say, we fold all edges with the same label and orientation at a vertex. The first four collection of folds, labeled $F_{1}, F_{2}, F_{3}$ and $F_{4}$, are performed in \Cref{fig:compiled_phi3}. For readability we will omit the $a,b,c$ labels on the yellow, green, and pink edges in the figures from now on. We point out that for these first four folding maps we are only ever folding edges that are distance at most two from the leftmost vertex or one from the rightmost vertex.
    This is unimportant right now, but will be important in the proof of \Cref{prop:splithe}, when we verify that our \emph{split maps} are homotopy equivalences.

    \Cref{fig:compiled_phi3} shows the next three folds, $F_5, F_6,$ and $F_7$ that are performed. The map $\psi$ from \Cref{thm:stallings} after these seven folding maps is the identity up to permuting edges. Thus, $\phi_{3}$ is a homotopy equivalence.
    
    Next we turn our attention to an arbitrary $\phi_{3+2m}$ with $m \ge 1$. \Cref{fig:compiled_phi_general} shows the map $\phi_{3+2m}$ graphically.
    
    \begin{figure}[ht!]
    \centering
    \makebox[\textwidth][c]{
    \includegraphics[width=1.2\textwidth]{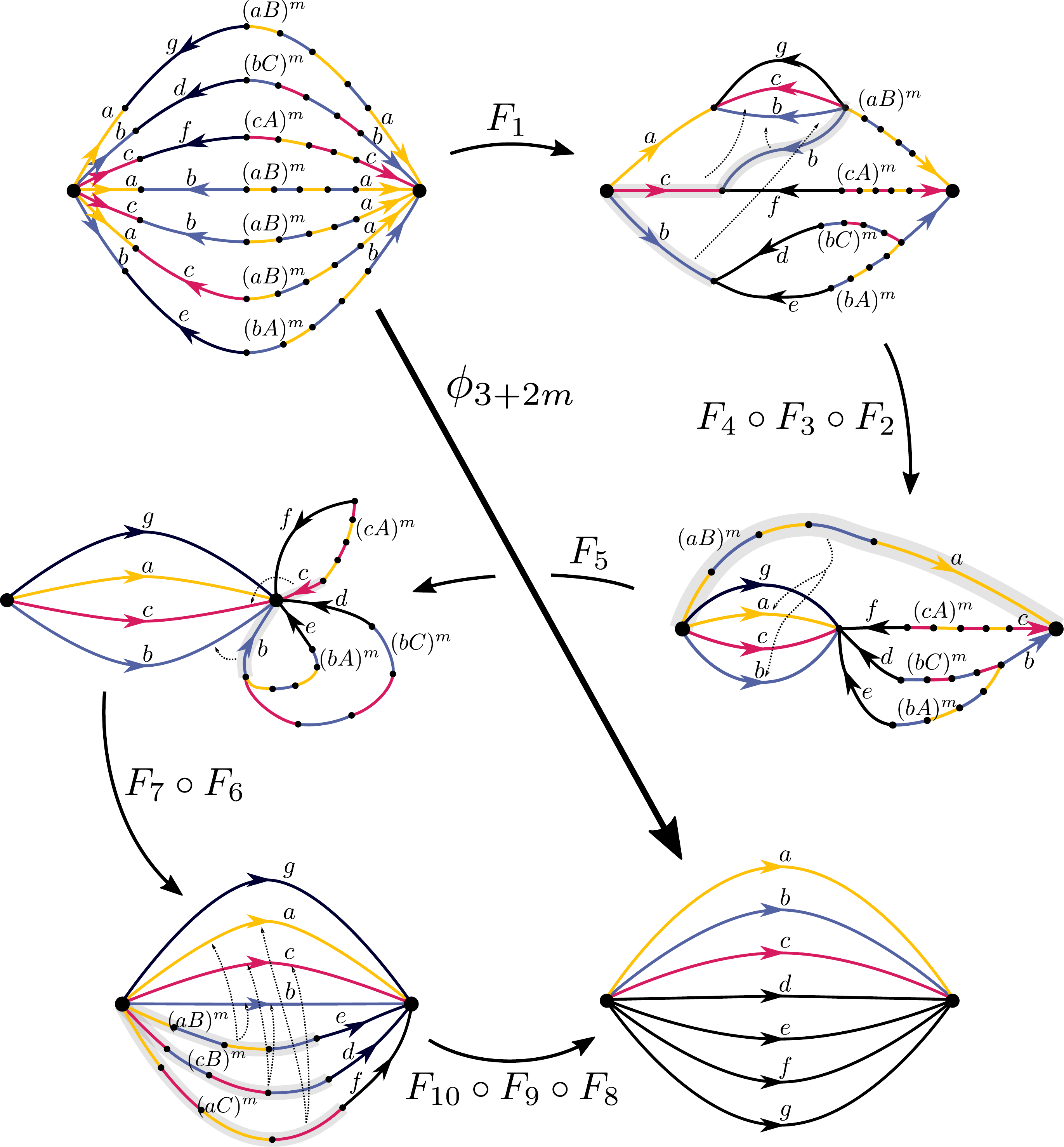}
    }
    \caption{Folding $\phi_{3+2m}$ for $m \ge 1$. All folds $F_1,\ldots,F_{10}$ are of Type I. The highlighted grey edge paths at each stage represent edges involved in the folding maps being described.} 
    \label{fig:compiled_phi_general}
    \end{figure}
    
     We begin with the first four folding maps that are effectively the same folds as in the $\phi_{3}$ case. The main difference is that for $F_{1}$ we fold not just the first edge adjacent to the rightmost vertex, but instead fold the entire edge path of length $2m+1$ adjacent to this vertex. For example, $\phi_{3+2m}(a)$ and $\phi_{3+2m}(d)$ both end in $(aB)^ma$ so the portions of the edge paths corresponding to these $2m+1$ characters are folded. 
     The next three maps, $F_{2},F_{3},F_{4}$, are exactly the same as in the $\phi_{3}$ case. (Compare $F_2,F_3,F_4$ in \Cref{fig:compiled_phi3} with those in \Cref{fig:compiled_phi_general}.)

    Next we perform a new type of folding map called a \emph{wrapping} map. See \Cref{fig:compiled_phi_general}. After performing $F_4\circ F_3\circ F_2$, there is a loop labelled by $aB$ beginning at the left-most vertex. The map $F_{5}$ will consist of $2m+1$ Type I folds that take an edge path labelled by $(aB)^ma$ and first ``wrap" it around this loop labeled by $aB$ a total of $m$-times before folding the last $a$-edge. These folds are possible because the $a$-edge at the start of the edge path labelled by $(aB)^m$ and the $a$-edge of the loop $aB$ we are wrapping the edge path around begin at the same vertex.
    Next, $F_{6}$ and $F_7$ are simple single folds along a $b$-edge and a $c$-edge respectively. Then $F_{8},F_{9},$ and $F_{10}$ are again ``wrapping" maps. Namely, the map $F_8$ folds $m$ times around $cB$, the map $F_9$ folds $m$ times around $aB$, and the map $F_{10}$ folds $m$ times around $aC$. The single fold performed by the $F_6$ map eliminates the need to do a single fold in $F_8$ and $F_9$. Note that it is essential to perform the single fold in the $F_6$ map before $F_8$ in order to ensure that the edge path labelled by $(cB)^m$ and the $c$-edge of the loop $cB$ begin at the same vertex so that folding is possible. The same applies to the edge path labelled by $(aB)^m$ that is wrapped around $aB$ in $F_9$. Similarly, $F_{10}$, wrapping $(aC)^m$ over $aC$ requires the single-edge fold $F_7$.
    In this way we see that we obtain the original graph and that $\psi$ from \Cref{thm:stallings} is the identity map up to permuting edges after performing all of these Type I folds. We conclude that $\phi_{3+2m}$ is also a homotopy equivalence for all $m \ge 1$. 
    \end{proof}

\section{Splitting graphs}\label{sec:Splitting}\vspace{1em}
Given a weak Perron number $\lambda$, we constructed a graph map $f_\lambda$ on a star graph that is uniformly $\lambda$-expanding and a homotopy equivalence in \Cref{sec:Asterisk_Maps}. Though it is close to being the desired map for \Cref{thm:thurston_main}, it is far from being a traintrack map because of the large amounts of backtracking in the star maps.
In this section, we edit $f_\lambda$ by ``blowing up" each edge of the star graph to the prototype graph $P_7$ and use the prototype maps of \Cref{sec:Prototypes} to resolve the backtracking and obtain a traintrack map (\Cref{prop:splitttmap}). During this process, we carefully preserve certain properties of $f_\lambda$, specifically uniformly $\lambda$-expanding (\Cref{prop:splitexpanding}) and being a homotopy equivalence (\Cref{prop:splithe}). This process of blowing up a graph is named \emph{splitting} by Thurston in \cite[Section 9]{Thurston2014Entropy} and we will discuss the process in \Cref{ssec:SPL_Graphs}.

\subsection{Split Graphs and Split Maps}\label{ssec:SPL_Graphs}

\begin{DEF}
Given a bipartite metric graph $\Gamma$ we define the \textbf{split graph}, $S(\Gamma)$, by replacing each edge of $\Gamma$ with a copy of the prototype graph $P_7$ from \Cref{sec:Prototypes} whose edges have the same length as the replaced edge. That is, if the edges of $\Gamma$ are enumerated $x_{1},x_{2},\ldots$, replace the edge $x_{i}$ between vertices $v_{0}$ and $v_{1}$ with $7$ edges labelled $a_i,b_i,\ldots, g_i$ each of which has the same length as $x_i$.
See \Cref{fig:splitex1} for an example of a split graph.
\end{DEF}

\begin{figure}[ht]
	    \centering
	    \def\svgwidth{\columnwidth}
		    \scalebox{.75}{\import{pics/}{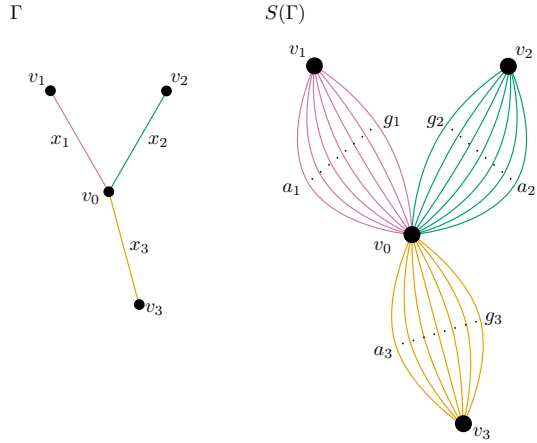}}
		    \caption{Example of a split star graph. The initial graph, $\Gamma$, is on the left and the split graph, $S(\Gamma)$, is on the right.}
	    \label{fig:splitex1}
\end{figure}

$S(\Gamma)$ inherits a traintrack structure from the traintrack structure on the prototype graph in the following way: turns are legal in $S(\Gamma)$ if and only if the turns without subscripts define a legal turn in the prototype. For example, a turn of the form $a_iB_j$ is legal as the turn $aB$ is legal in the prototype graph $P_7$, and a turn of the form $a_iF_j$ is not, as the turn $aF$ is illegal in $P_7$.

\begin{DEF}
Given a self-map $f:\Gamma\longrightarrow\Gamma$ which preserves the bipartite structure and does not collapse any edges, we define the \textbf{split map}
    \[
    S(f):S(\Gamma)\longrightarrow S(\Gamma)
    \]
as follows.  Let $y_i$ be an edge of $S(\Gamma)$ corresponding to the edge $x_i$ of $\Gamma$, where $y \in \{a, b, \ldots, g\}$. For an edge path $w$ in $\Gamma$, denote by $\|w\|$ the length of $w$ when each edge of $\G$ is endowed with length 1. Now, if $\|f(x_i)\| = \ell$, then $S(f)$ sends $y_i$ to the edge path given by $\phi_\ell(y)$ with subscripts identical to the subscripts of the edge path $f(x_{i})$. Note that the word length of the edge path $f(x_i)$ determines which prototype map is used to define the image of $y_i$ for all $y \in \{a,b,\ldots,g\}$. Symbolically:
\[
    S(f)(y_i) := [\phi_{\|f(x_i)\|}(y)]_{f(i)},
\]
where for a path $\mathcal{P}$ in the prototype graph $P_7$, the notation $[\mathcal{P}]_{f(i)} \subset S(\Gamma)$ means that the path $\mathcal{P}$ is given the subscripts of the edge path $f(x_i)$. 

 Note, since $f$ preserves the bipartite structure, the word length of edge paths of the form $f(x_i)$ is always \emph{odd}, which makes our definition using prototype maps $\{\phi_{n}\}$ well-defined. Moreover, $S(f)$ preserves the induced bipartite structure on $S(\G)$.
\end{DEF}

\begin{EX}
Let $\Gamma$ be the star graph $\ast_{3}$.
Define a star map $f:\Gamma\longrightarrow\Gamma$ by:
\begin{align*}
    f(x_1)&=x_2X_2x_3,\\
    f(x_2)&=x_3,\\
    f(x_3)&=x_1X_1x_3X_3x_2.
\end{align*}

Then for the split map $S(f):S(\Gamma)\longrightarrow S(\Gamma)$, we need to use three different prototype maps for edges with three different subscripts. Namely, for $y \in \{a,b,\ldots,g\}$, we need to use $\phi_3$ for $S(f)(y_1)$ as $\|f(x_1)\|=3$, use $\phi_1=\id$ for $S(f)(y_2)$ as $\|f(x_2)\|=1$, and use $\phi_5$ for $S(f)(y_3)$ as $\|f(x_3)\|=5$. 

More precisely,
    \begin{align*}
    & S(f)(a_1) = a_2G_2a_3, && S(f)(b_1) = b_2D_2b_3, && S(f)(c_1)=c_2F_2c_3, && \\
    & S(f)(a_2) = a_3, && S(f)(b_2) = b_3, && S(f)(c_2)=c_3, && \\
    & S(f)(a_3) = a_1G_1a_3B_3a_2, && S(f)(b_3) = b_1D_1b_3C_3b_2, && S(f)(c_3)=c_1F_1c_3A_3c_2, && 
    \end{align*}
and so on for $y_i = d_i, e_i, f_i,$ and $g_i$, with $i = 1, 2, 3.$ 
\end{EX}

\begin{PROP}\label{prop:splitexpanding}
     If $f:\Gamma \to \Gamma$ is uniformly $\lambda$-expanding, then so is $S(f):S(\Gamma) \to S(\Gamma)$.
\end{PROP}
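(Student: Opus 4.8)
The plan is to show that each edge of $S(\Gamma)$ is scaled by exactly $\lambda$ under $S(f)$, using the key observation that all seven prototype maps $\phi_n$ have a uniform effect on the total length of each edge. First I would record the numerical fact underlying the construction: for every $y \in \{a,b,\ldots,g\}$ and every odd $n$, the edge path $\phi_n(y)$ has word length exactly $n$. This is immediate from the explicit formulas for $\phi_{3+2m}$ (each image is a word of length $2m+3 = n$) and trivially true for $\phi_1 = \mathrm{id}$. So $\|\phi_{\|f(x_i)\|}(y)\| = \|f(x_i)\|$ for every such $y$.

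Next I would unwind the definitions of the lengths. Let $\ell$ denote the length function on $\Gamma$ and, by construction of $S(\Gamma)$, each edge $y_i$ (for $y \in \{a,\ldots,g\}$) has length $\ell(y_i) = \ell(x_i)$, the length of the edge $x_i$ it subdivides. Since $f$ is uniformly $\lambda$-expanding on $\Gamma$, we have $\ell(f(x_i)) = \lambda\,\ell(x_i)$, and because the combinatorial length of an edge path multiplied by the common edge length recovers the metric length — more precisely, $\ell(f(x_i)) = \|f(x_i)\|$ when $\Gamma$'s edges all have length $\ell(x_i)$... here I should be a little careful, since $\Gamma$ need not have all edges of equal length. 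The cleaner route: $S(f)(y_i) = [\phi_{\|f(x_i)\|}(y)]_{f(i)}$ is an edge path in $S(\Gamma)$ whose underlying sequence of (subscript-forgotten) edges, together with the subscripts, is governed by $f(x_i)$; each edge appearing in it has a length inherited from the corresponding edge of $\Gamma$ in the path $f(x_i)$. So the total length $\ell(S(f)(y_i))$ equals $\ell(f(x_i))$ — the blow-up replaces each length-1-in-combinatorial-count step of $f(x_i)$ by a prototype-image word of the same combinatorial count and matching lengths. Then $\ell(f(x_i)) = \lambda\,\ell(x_i) = \lambda\,\ell(y_i)$, which is exactly the uniform $\lambda$-expanding condition for $S(f)$.

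The step I expect to require the most care is making precise the claim that $\ell(S(f)(y_i)) = \ell(f(x_i))$, i.e., that the length of the blown-up image edge path equals the length of the original image edge path in $\Gamma$. The point is that $f(x_i)$ is a concatenation of $\|f(x_i)\|$ edges $x_{j_1}^{\pm}, \ldots$ of $\Gamma$, and $S(f)(y_i) = [\phi_{\|f(x_i)\|}(y)]_{f(i)}$ assigns to the $k$-th letter of the prototype word $\phi_{\|f(x_i)\|}(y)$ the subscript of the $k$-th letter of $f(x_i)$; since a letter with subscript $j$ in $S(\Gamma)$ has length $\ell(x_j)$ and the prototype word has the same length $\|f(x_i)\|$ as $f(x_i)$, summing lengths letter-by-letter gives the same total. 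I would spell this out with a short displayed computation summing over the letters, and then conclude. A remark worth including: this is exactly why the splitting construction is designed so that prototype maps of the correct odd word-length are used — it is precisely this length-matching that transports the uniform expansion property through the blow-up.

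\begin{proof}
Recall that, by construction, an edge $y_i$ of $S(\Gamma)$ with $y \in \{a,b,\ldots,g\}$ has the same length as the edge $x_i$ of $\Gamma$ it replaces; write $\ell$ for the length function on both graphs, so $\ell(y_i) = \ell(x_i)$.

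First, observe that every prototype map preserves combinatorial word length in the sense that $\|\phi_n(y)\| = n$ for all $y \in \{a,\ldots,g\}$ and all odd $n \ge 1$. For $n=1$ this is clear since $\phi_1 = \id$, and for $n = 3+2m$ each of the seven image words displayed in \Cref{ssec:PRO_Definition} has exactly $2m+3 = n$ letters.

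Now fix an edge $y_i$ of $S(\Gamma)$. Let $\ell_0 := \|f(x_i)\|$, and write the edge path $f(x_i)$ in $\Gamma$ as a concatenation of $\ell_0$ (oriented) edges; denote by $j_1,\ldots,j_{\ell_0}$ the subscripts of these edges, so that the $k$-th letter of $f(x_i)$ traverses $x_{j_k}^{\pm 1}$ and hence has length $\ell(x_{j_k})$. By definition, $S(f)(y_i) = [\phi_{\ell_0}(y)]_{f(i)}$ is the edge path in $S(\Gamma)$ obtained from the length-$\ell_0$ word $\phi_{\ell_0}(y)$ by giving its $k$-th letter the subscript $j_k$. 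Therefore the $k$-th letter of $S(f)(y_i)$ traverses an edge of length $\ell(x_{j_k})$, and summing over all letters gives
\[
    \ell\bigl(S(f)(y_i)\bigr) = \sum_{k=1}^{\ell_0} \ell(x_{j_k}) = \ell\bigl(f(x_i)\bigr).
\]
Since $f$ is uniformly $\lambda$-expanding, $\ell(f(x_i)) = \lambda\,\ell(x_i) = \lambda\,\ell(y_i)$, so $\ell(S(f)(y_i)) = \lambda\,\ell(y_i)$. As $y_i$ was an arbitrary edge of $S(\Gamma)$, the map $S(f)$ is uniformly $\lambda$-expanding.
\end{proof}
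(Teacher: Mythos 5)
Your proof is correct and follows essentially the same route as the paper's: both arguments reduce to observing that $\ell(S(f)(y_i)) = \ell(f(x_i))$ because the split graph copies edge lengths and the subscripts on the prototype word match those of $f(x_i)$, then invoke the uniform expansion of $f$. Your version is slightly more explicit in recording the fact $\|\phi_n(y)\| = n$ (which the paper uses implicitly when it writes $S(f)(y_i)$ as a word of the same length $k$), but the core argument is identical.
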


\begin{proof}
Essentially, this follows from the definition of the split map. Let $\ell$ be the length function on $\Gamma$. If $f(x_i)=x_{i_1}\cdots x_{i_k}$, then $S(f)(y_i)=z^{(1)}_{i_1}\cdots z^{(k)}_{i_k}$ for some $z^{(1)},\ldots,z^{(k)} \in \{a^{\pm 1},b^{\pm 1},\ldots,g^{\pm 1}\}$. By definition of $S(\Gamma)$, we have $\ell(x_{i_j})=\ell(z^{(j)}_{i_j})$ for all $j$, so
\[
    \ell\left(S(f)(y_i)\right)=\sum_{j=1}^k \ell(z^{(j)}_{i_j}) = \sum_{j=1}^k \ell(x_{i_j}) =\ell(f(x_i)) = \lambda \ell(x_i) =\lambda \ell(y_i),
\]
which proves that $S(f)$ is uniformly $\lambda$-expanding.
\end{proof}
    \subsection{Splitting Maps are Traintrack Maps}\label{ssec:SPL_Traintrack_Maps}
        It turns out that, with some mild conditions, $S(f)$ is always a traintrack map.

\begin{PROP} \label{prop:splitttmap}
    Let $f:\Gamma \rightarrow \Gamma$ be a map that preserves the bipartite structure and maps each edge to an edge-path of length at least $1$. Then $S(f)$ is a traintrack map.
\end{PROP}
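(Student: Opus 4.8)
The plan is to verify the two defining conditions of a traintrack map for $S(f)$: that it is taut (a local embedding on the interior of each edge), and that there is a traintrack structure on $S(\Gamma)$ — here the induced one described above — for which legal turns go to legal turns and each edge goes to a legal path. Since the image $S(f)(y_i)$ is, by definition, $[\phi_{\|f(x_i)\|}(y)]_{f(i)}$, i.e. the edge path $\phi_\ell(y)$ for the appropriate odd length $\ell = \|f(x_i)\| \ge 1$, decorated with subscripts coming from the $\Gamma$-level path $f(x_i)$, everything should reduce to facts already established about the prototype maps $\phi_n$ together with a bookkeeping argument about how the subscripts are attached.

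First I would handle the case $\ell = 1$, where $\phi_1 = \id$, separately but trivially: then $S(f)(y_i) = y_{f(i)}$ is a single edge, which is certainly legal, tautness is immediate, and legality of turns is inherited directly from the legality check on the corresponding turn in the prototype (by the very definition of the induced traintrack structure on $S(\Gamma)$). For $\ell = 3 + 2m \ge 3$, the key observation is that legality of a path or a turn in $S(\Gamma)$ depends only on the underlying unsubscripted turns in $P_7$: a turn $y_i Z_j$ (or $Y_i z_j$) is legal in $S(\Gamma)$ exactly when $yZ$ is legal in $P_7$. So I would argue: (i) each edge $y_i$ is sent to a legal path, because the underlying path $\phi_\ell(y)$ in $P_7$ was already shown to be a legal path in the prototype (this is exactly the verification done in Section~\ref{sec:Prototypes} that the $\phi_n$ are traintrack maps), and all the internal turns of $S(f)(y_i)$ have the same underlying $P_7$-turns; (ii) a legal turn $y_i Z_j$ in $S(\Gamma)$ is sent by $S(f)$ to a turn whose underlying $P_7$-turn is the $\phi_\ell$-image of the legal turn $yZ$, which is legal in $P_7$ since $\phi_\ell$ is a traintrack map, hence the image turn is legal in $S(\Gamma)$. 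One subtlety to address carefully here is that different edges of $\Gamma$ at a vertex of $\Gamma$ blow up to different copies of $P_7$, so a turn in $S(\Gamma)$ at a "tip-type'' vertex $v_0$ or $v_1$ of some prototype copy stays within one copy, while a turn at a vertex of $\Gamma$ involves two prototype copies; in the latter case the underlying $P_7$-turn is read off from the $\{a,b,\dots,g\}$-labels ignoring subscripts, and the hypothesis that $f$ preserves the bipartite structure is what guarantees $f(x_i)$ has odd length so the subscript assignment (and the choice of $\phi_\ell$) is well-defined.

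For tautness, I would note that $S(f)(y_i)$ is the word $\phi_\ell(y)$ with subscripts inserted, and $\phi_\ell(y)$ is reduced as a word in $\pi_1(P_7)$ — this was checked when the $\phi_n$ were shown to be taut. Adding subscripts can only \emph{refine} distinctions between edges (edges that were equal in $P_7$ may become distinct in $S(\Gamma)$, but never the reverse), so no new cancellation is introduced and the image edge path remains reduced; hence $S(f)$ is a local embedding on the interior of each edge. I would also remark on the edge case: since $f$ maps each edge to an edge path of length $\ge 1$, we never invoke $\phi_n$ for even or non-positive $n$, so $S(f)$ is genuinely defined.

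The main obstacle, I expect, is not any deep point but rather setting up the "legality depends only on underlying $P_7$-turns'' principle cleanly and making sure the turn-at-a-$\Gamma$-vertex case is handled rigorously — in particular confirming that $\phi_\ell$ acting on a legal turn of $P_7$ lands in a legal turn regardless of which of the two vertices $v_0, v_1$ the turn sits at, which is exactly the content of the turn-tables verified in Section~\ref{sec:Prototypes} (and is uniform in $m$ because only finitely many turn types appear in each $\phi_\ell(y)$). Everything else is bookkeeping that follows from the definition of the split map and the already-established properties of the prototype maps.
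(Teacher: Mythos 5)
Your proposal correctly identifies that legality in $S(\Gamma)$ is read off from the underlying $P_7$-turns, and part (i) (each edge maps to a legal path) is handled exactly as in the paper; the tautness argument you add is correct but redundant, since a legal path is by definition a local embedding. However, in part (ii) there is a genuine gap. You write that a legal turn $y_iZ_j$ is sent to ``a turn whose underlying $P_7$-turn is the $\phi_\ell$-image of the legal turn $yZ$,'' with a single $\phi_\ell$. But when $i\neq j$ — which is the generic cross-copy turn at a vertex of $\Gamma$ — the two sides of the turn are governed by potentially \emph{different} prototype maps: the last edge of $S(f)(y_i)$ is the last edge of $\phi_{\|f(x_i)\|}(y)$, while the first edge of $S(f)(Z_j)$ is the first edge of $\phi_{\|f(x_j)\|}(Z)$. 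There is no single $\phi_\ell$ sending $yZ$ to this hybrid turn, so you cannot simply invoke ``$\phi_\ell$ is a traintrack map.'' You flag a subtlety about turns spanning two prototype copies, but the resolution you offer (bipartiteness $\Rightarrow$ $\phi_\ell$ well-defined) addresses well-definedness of the split map, not this mismatch of indices.

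The paper closes this gap with a specific structural observation about the prototype maps. Every legal turn in $P_7$ involves at least one of the $a$-, $b$-, or $c$-edges, so after possibly swapping $y$ and $z$ one may assume $y\in\{a,b,c\}$. For such $y$, the last edge of $\phi_{3+2m}(y)$ is $y$ itself for \emph{every} $m\ge 0$ (and also for $\phi_1=\mathrm{id}$): $\phi_{3+2m}(a)$ ends in $a$, $\phi_{3+2m}(b)$ ends in $b$, $\phi_{3+2m}(c)$ ends in $c$. Hence the last edge of $\phi_{\|f(x_i)\|}(y)$ coincides with the last edge of $\phi_{\|f(x_j)\|}(y)$, and the hybrid turn equals the $\phi_{\|f(x_j)\|}$-image of the legal turn $yZ$, which is legal because $\phi_{\|f(x_j)\|}$ is a traintrack map. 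Note that this uniformity fails for $y\in\{d,e,f,g\}$ (e.g.\ $\phi_1(d)=d$ ends in $d$ but $\phi_{3+2m}(d)$ ends in $a$ for $m\ge 0$), which is why the reduction to $y\in\{a,b,c\}$ is essential and not mere convenience. Without this observation your step (ii) does not go through.
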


\begin{proof}
    As before, we will denote by $\{x_i\}$ the edge set of $\Gamma$, and by $y_i$ an edge of $S(f)$ with $y \in \{a,\ldots,g\}$, corresponding to an edge $x_i$ of $\Gamma$. 
    
    Recall that the traintrack structure on $S(\Gamma)$ is defined so that a turn is legal if and only if the corresponding turn in the prototype graph $P_7$ obtained by forgetting subscripts is legal. 
    Hence, it automatically follows that every edge $y_i$ of $S(\Gamma)$ is sent to a legal path via $S(f)$, because forgetting the subscripts of the image $S(f)(y_i)$ is exactly $\phi_{\|f(x_i)\|}(y)$, which is a legal path since the prototype maps $\{\phi_n\}$ are traintrack maps. (See \Cref{ssec:PRO_Definition}.) 
    
    Therefore, to show $S(f)$ is a traintrack map it suffices to show that $S(f)$ sends a legal turn to a legal turn. First note that only turns of the form $y_iZ_j$, where $y \in \{ a, \ldots, g\}$ and $Z \in \{A, \ldots, G\}$ are legal in $S(\Gamma)$. Take such a legal turn. Then, again by definition, $yZ$ is a legal turn in $P_7$. Since all the legal turns in $P_7$ consist of at least one of the $a$-, $b$-, or $c$-edges, and $zY$ is legal if and only if $yZ$ is legal, we may assume without loss of generality that $y \in \{a,b,c\}$ (otherwise swap the roles of $y$ and $z$ in this proof). On the other hand, the turn $y_iZ_j$ will be mapped to the following turn, which is a concatenation of two edges:
    \[
        \left[\text{Last edge of }S(f)(y_i)\right]\cdot \left[\text{First edge of }S(f)(Z_j)\right].
    \]
    This turn is legal if and only if the corresponding turn in $P_7$
    \[
        \left[\text{Last edge of }\phi_{\|f(x_i)\|}(y)\right]\cdot \left[\text{First edge of }\phi_{\|f(x_j)\|}(Z)\right]
    \]
    (i.e., deleting the subscripts)
     is legal in $P_7$. The key observation is that by the definition of prototype maps $\{\phi_{3+2m}\}$ on the $a$, $b$, and $c$ edges of $P_7$, the last edge of $\phi_{3+2m}(y)$ is the same as $y$ regardless of the value $3+2m$. Therefore, we can replace the prototype map $\phi_{\| f(x_i) \|}$ for $y$ to $\phi_{\| f(x_j) \|}$, so the turn is actually identical to the following turn:
    \[
    \left[\text{Last edge of }\phi_{\|f(x_j)\|}(y)\right]\cdot \left[\text{First edge of }\phi_{\|f(x_j)\|}(Z)\right],
    \]
    which is legal because $yZ$ is a legal turn and $\phi_{\|f(x_j)\|}$ is a traintrack map. This concludes that $S(f)$ is a traintrack map.
    
    \end{proof}

\subsection{Splitting Star Maps}\label{ssec:SPL_Asterisk_Maps}
      Next we verify that $S(f)$ induces an automorphism of $\pi_{1}(S(\Gamma))$. That is, we check that $S(f)$ is a homotopy equivalence.
        
\begin{PROP} \label{prop:splithe}
    Let $*_{n}$ be a star graph and let $f:*_{n} \rightarrow *_{n}$ be a star map. Then $S(f)$ is a homotopy equivalence. 
\end{PROP}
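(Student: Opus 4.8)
The plan is to prove that $S(f)$ is a homotopy equivalence by Stallings folds, following the method of Lemma~\ref{lem:prototypehe}: subdivide the domain $S(\ast_n)$ along the preimages of vertices to obtain a graph morphism, then exhibit a sequence of folds, all of Type~I, so that $S(f)$ factors as these folds followed by a graph automorphism of $S(\ast_n)$. The organizing structural fact is that $S(\ast_n)$ is simply $n$ copies $P_7^{(1)},\dots,P_7^{(n)}$ of the prototype graph wedged at one vertex: since every edge of $\ast_n$ emanates from the center, the $v_0$-vertices of the prototype copies are all identified to the center of $S(\ast_n)$, while the $v_1$-vertices are the $n$ pairwise-distinct tips. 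Write $a_i,\dots,g_i$ for the edges of the copy $P_7^{(i)}$ replacing $x_i$. Since $f$ is a star map it fixes the center, induces a permutation $\sigma$ of the tips, and has first-edge map a permutation $\tau$ of the edges of $\ast_n$.

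On a single copy, $S(f)(y_i)=[\phi_{\ell_i}(y)]_{f(i)}$ with $\ell_i=\|f(x_i)\|$, the $p$-th letter of the image carrying the subscript of the $p$-th letter of $f(x_i)$; so after subdividing $y_i$ into $\ell_i$ sub-edges, the restriction of $S(f)$ to $P_7^{(i)}$ is a copy of the subdivided prototype map $\phi_{\ell_i}$ whose targets are merely relabelled by subscripts read off the single fixed word $f(x_i)$. The bookkeeping that lets the fold sequence of Lemma~\ref{lem:prototypehe} for $\phi_{\ell_i}$ be replayed inside $P_7^{(i)}$ rests on two features of $f(x_i)$: since $x_i$ joins the center to a tip, $f(x_i)$ reads $x_{k_1}X_{k_1}x_{k_2}X_{k_2}\cdots x_{k_r}$, so its positions $2t-1$ and $2t$ carry the same subscript $k_t$ (and $k_1=\tau(i)$); and in every prototype word $\phi_{3+2m}(y)$ the odd positions are forward edges and the even positions backward. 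Together these ensure that whenever the proof of Lemma~\ref{lem:prototypehe} folds two sub-edges together --- they necessarily occupy positions of the same parity, and in the controlled situations of that proof positions lying in a common block $\{2t-1,2t\}$ --- the corresponding sub-edges in $P_7^{(i)}$ carry the same subscript, so the fold can be performed there and remains of Type~I.

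Finally these $n$ per-copy fold sequences must assemble into one legal fold sequence for $S(f)$, which amounts to controlling the center, the only vertex shared by distinct copies. The sub-edges initially incident to the center are precisely the first sub-edges $y_i^{1}$, and since $\phi_{\ell_i}(y)$ begins with $a$, $b$, or $c$ according as $y\in\{a,d,f\}$, $\{b,g\}$, or $\{c,e\}$, the map $S(f)$ sends $a_i^{1},d_i^{1},f_i^{1}$ to $a_{\tau(i)}$, sends $b_i^{1},g_i^{1}$ to $b_{\tau(i)}$, and sends $c_i^{1},e_i^{1}$ to $c_{\tau(i)}$. As $\tau$ is a permutation, first sub-edges from distinct copies have distinct images, so the first folds at the center split as a disjoint union over $i$ of the initial folds of the $\phi_{\ell_i}$ sequences --- in particular no cross-copy, rank-reducing Type~II fold occurs --- and the remaining folds of each $\phi_{\ell_i}$ sequence stay within the respective copy (this is exactly where the locality of the first folds, flagged in the proof of Lemma~\ref{lem:prototypehe}, is used). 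Performing all of these Type~I folds, $S(f)$ factors through a graph whose terminal immersion restricts on each copy to the automorphism of $P_7$ produced in Lemma~\ref{lem:prototypehe}; globally it is the automorphism of $S(\ast_n)$ permuting copies by $\sigma$ and edge-labels within each copy as in Lemma~\ref{lem:prototypehe}. Hence $S(f)$ is a homotopy equivalence.

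I expect the main obstacle to be the middle step: a careful fold-by-fold verification --- including the ``wrapping'' folds of Lemma~\ref{lem:prototypehe} and the folds that identify the center with interior vertices of a copy --- that the subscripts always match (via the position-pairing $2t-1,2t$ in $f(x_i)$), and that no Type~II fold is ever forced between different copies even after such identifications. This is precisely where the hypothesis that $f$ is a \emph{star} map, rather than merely a bipartite-preserving non-collapsing map, is used: otherwise first sub-edges of distinct copies could share an image at the center. As in Lemma~\ref{lem:prototypehe}, the cleanest presentation is a ``proof by picture'', running Figures~\ref{fig:compiled_phi3} and~\ref{fig:compiled_phi_general} simultaneously inside every copy of $P_7$ in $S(\ast_n)$.
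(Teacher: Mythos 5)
Your overall strategy matches the paper's: use Stallings folds, replay the fold sequence of Lemma~\ref{lem:prototypehe} inside the petals of $S(\ast_n)$, and track subscripts to verify each fold. You correctly isolate the two structural inputs --- positions $2t-1$ and $2t$ of $f(x_i)$ carry the same subscript, and the first-edge map $\tau$ is a permutation --- and correctly observe that the first folds at the center split as a disjoint union over $i$ because $\tau$ is a bijection, so no Type~II fold occurs there.

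The gap is the claim that ``the remaining folds of each $\phi_{\ell_i}$ sequence stay within the respective copy.'' This is false, and the fold sequence you outline would stall there. The wrapping folds $F_5,\ldots,F_{10}$ are inherently cross-copy: $F_5$ on the $r$-th petal wraps the subdivided tail of $a_r,d_r,e_r,f_r$ (carrying subscripts $j_2,j_3,\ldots$ read off $f(x_r)$) around loops of the form $a_{j_t}B_{j_t}$ at the center, and the loop $a_{j_t}B_{j_t}$ is created by $F_1$--$F_4$ acting on petal $\tau^{-1}(j_t)$, which generically is not $r$. The ``locality'' flagged in Lemma~\ref{lem:prototypehe} establishes only that $F_1,\ldots,F_4$ act within a single petal (every fold they perform is between sub-edges with equal subscripts near the tip or near the center); it says nothing about $F_5,\ldots,F_{10}$. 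What actually makes the wrapping folds legal in the paper's proof is that $F_1,\ldots,F_4$ must be performed on \emph{all} petals before any $F_5$ is attempted, at which point every $a_j,b_j,c_j$ edge ($j=1,\ldots,n$) emanates from the central vertex --- the permutation property of $\tau$ is used a second time here to guarantee all $n$ of each appear --- so that whatever subscripts arise in $f(x_r)$, a loop with matching subscripts is available to wrap around. Your proposed ordering (first folds at the center, then folds confined to each copy) is incompatible with this, so the verification you flag as ``the main obstacle'' is not a technicality to be checked within your framework but a place where the framework itself has to be revised to allow, and then organize, cross-petal folds.
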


\begin{proof}
    We use Stallings folds as in the proof for prototype maps (\Cref{lem:prototypehe}). First note the following two properties of star maps. A star map is always a permutation on the first edge map and each edge is mapped to an edge-path of odd length. In particular, the image of an edge under a star map is of the form $f(x_{r}) = x_{j_{1}}X_{j_{1}}\cdots x_{j_{m+1}}X_{j_{m+1}}x_{j_{m+2}}$, where $r$ and $j_1, \ldots, j_{m+2}$ are in $\{1, \ldots, n\}$, so that the edges in the image come in pairs (of an edge and its inverse) until the final edge.

    The split graph, $S(*_{n})$ is a ``flower" with $n$ ``petals", each of which is a copy of the prototype graph, $P_{7}$.
    We will first focus on a single petal corresponding to an edge $x_r$ of $\ast_n$ and begin folding there. The map on a petal is similar those given by \Cref{fig:compiled_phi_general} with the differences arising from the fact that the labels on the edges come equipped with subscripts. The subscripts are determined by the map $f$ on the original edges of the star graph as well. For instance, if $f(x_{r}) = x_{j_{1}}X_{j_{1}}\cdots x_{j_{m+1}}X_{j_{m+1}}x_{j_{m+2}}$, then
    \[
        S(f)(a_{r}) = a_{j_{1}}G_{j_{1}} a_{j_2}B_{j_2}\cdots a_{j_{m+1}}B_{j_{m+1}}a_{j_{m+2}},
    \]
    since $\phi_{3+2m}(a) = aG(aB)^ma$. This means that if $\|f(x_r)\| \ge 3$, then $S(f)(a_r)$ will traverse $a$-, $B$-, and $G$-edges in different petals. To initiate the folding process, we subdivide the edges according to their image under $S(f)$. On the $r$-th petal, we see the same sequence of subscripts on every single edge since the subscripts for the images $S(f)(a_r), S(f)(b_r), \ldots, S(f)(g_r)$ are all determined by $f(x_r)$. In case $\|f(x_r)\|=1$, say $f(x_r)=x_{r'}$, then $S(f)$ will just map the $r$-th petal to the $r'$-th petal. In this case, there is no need to subdivide the edges in the $r$-th petal, but we need to relabel the edges $y_r$ as $y_{r}'$ for $y \in \{a,\ldots,g\}$ before we fold.
    
    Recall the first four folding maps we performed in the proof of \Cref{lem:prototypehe} only fold edges that have the same subscript. To be precise, every fold that is performed through the maps $F_{1}, \ldots, F_{4}$ is between edges that lie in the two leftmost edges or in the rightmost edges in each petal. We refer again to \Cref{fig:compiled_phi_general} for a picture of these folds. Thus, after performing $F_1,\ldots,F_4$ on each petal we obtain a picture as in the middle of \Cref{fig:splitmapmidstep}. When $\|f(x_r)\|=1$, the maps $F_1,\ldots,F_4$ are just identity maps on the $r$-th petal.
    
    \begin{figure}[ht!]
	    \centering
		    \makebox[\textwidth][c]{\includegraphics[width=1.5\textwidth]{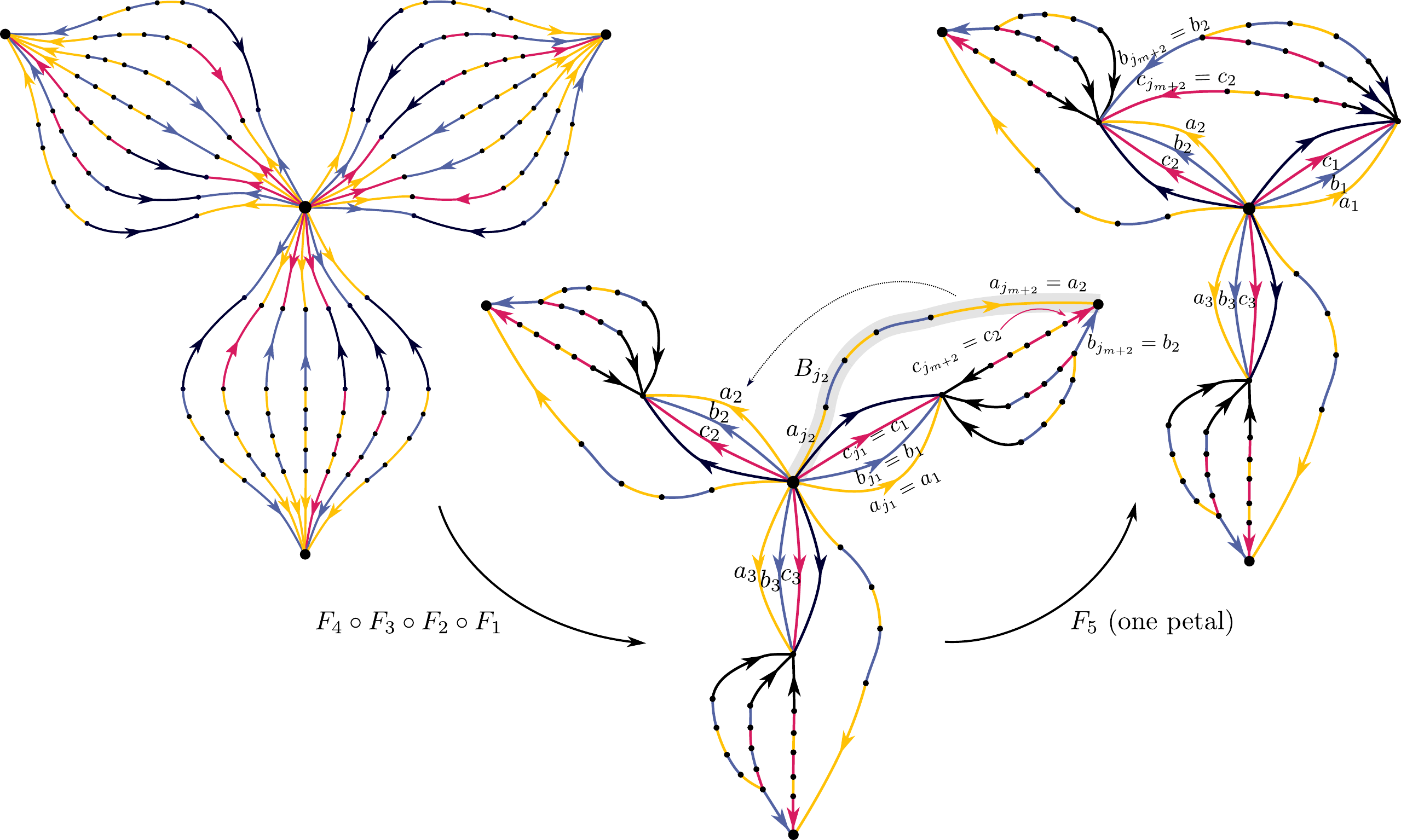}}
		    \caption{$(j_1=1,j_{m+2}=2)$. The graphs obtained after doing the four folds $F_1,\ldots,F_4$ on each petal, followed by performing $F_5$ on a single petal. Note that since the star map $f$ acts as a permutation on the first edges, all of the $a_{i}$, $b_{i}$, and $c_{i}$ appear and share the middle vertex as their initial vertex.}
	    \label{fig:splitmapmidstep}
    \end{figure}    
    
   The last few folding maps in the proof of \Cref{lem:prototypehe} were ``wrapping" maps that performed a sequence of folds wrapping edge paths of the form $(aB)^m,(cB)^m,$ and $(Ca)^m$ around loops $aB, cB,$ and $aC$, respectively. We could perform the first fold of $F_5$ because the first $a$-edge of $(aB)^m$ and the $a$-edge in the $aB$ loop that the path is wrapped around share an initial vertex. Then each subsequent fold of $b$- or $a$-edges has the same property. The subtlety in performing similar wrapping maps in \Cref{fig:splitmapmidstep} is that the edge paths now have subscripts and we must verify that the appropriate edges with the same subscript share an initial vertex so that we can perform each fold in the wrapping maps.
   
   The wrapping map $F_5$ in \Cref{lem:prototypehe} folds the edge path corresponding to $(aB)^ma$ in $\phi_{3+2m}(a)$ (all but the first two characters). Recall that 
   \[
   S(f)(a_{r}) = a_{j_{1}}G_{j_{1}} a_{j_2}B_{j_2}\cdots a_{j_{m+1}}B_{j_{m+1}}a_{j_{m+2}},
   \]
   and that $f$ is a permutation on the first edges, which implies that after performing $F_{4} \circ\cdots\circ F_{1}$ \emph{all} of the $a_{i}$, $b_{i}$, and $c_{i}$ edges are adjacent to the central vertex in \Cref{fig:splitmapmidstep} for $i= 1, 
   \ldots, n$.
   This means that we can perform the first two folds on the $a_{j_2}B_{j_2}$ edges of the folding map $F_5$, regardless of the value of $j_2$. After folding these two edges, the next pair $a_{j_3}B_{j_3}$ in the edge path now starts at the center vertex in \Cref{fig:splitmapmidstep} so that we can fold regardless of the value of $j_3$. Continuing in this way, we can perform $2m$ of the $2m+1$ single folds that constitute $F_5$. The final fold in $F_5$ in the proof of \Cref{lem:prototypehe} is along an $a$-edge. This final $a$-edge is labelled by $a_{j_{m+2}}$ and has the center vertex of the flower as its initial vertex after the first $2m$ folds of $F_5$ (see \Cref{fig:splitmapmidstep}). Thus, the final fold of $F_5$ can indeed be performed. The result of the folding map $F_5$ on the petal corresponding to $x_r$ is shown in \Cref{fig:splitmapmidstep} with the assumption that $j_1 = 1$ and $j_{m+2}=2$ for simplicity of the picture. The figure is far more complicated than that in the proof of \Cref{lem:prototypehe} since edges in the $r$-th petal are folded with edges in petals corresponding to other $x_k$'s.
   
   \begin{figure}[ht!]
       \centering
       \makebox[\textwidth][c]{\includegraphics[width=1.3\textwidth]{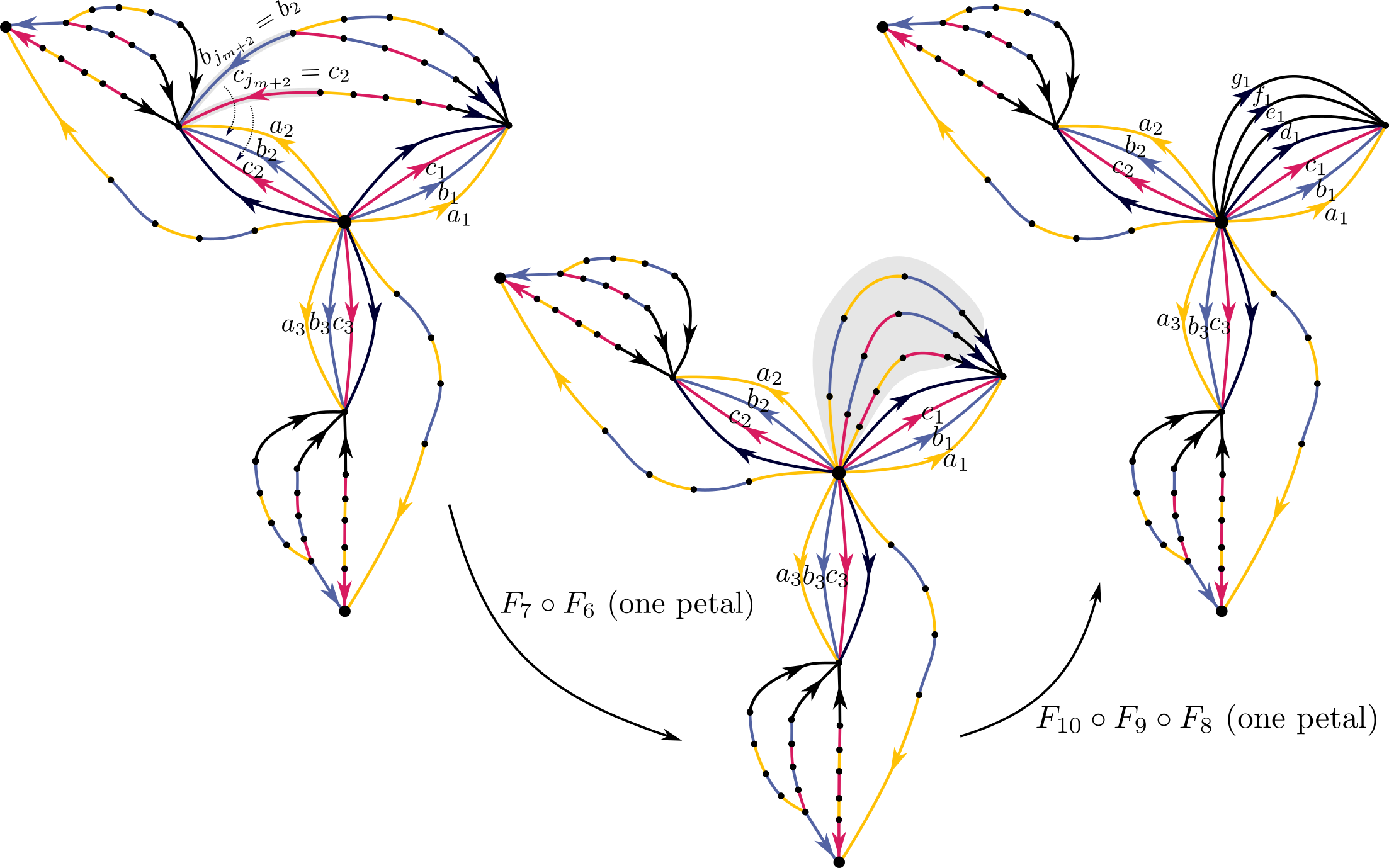}}
       \caption{$(j_1=1,j_{m+2}=2)$. Continuing from \Cref{fig:splitmapmidstep}, we perform $F_6$ and $F_7$ each to fold single edges $b_{j_{m+2}}$ and $c_{j_{m+2}}$ to obtain the middle graph. Then we do three ``wrapping-up'' folds $F_8,F_9$ and $F_{10}$ to obtain the rightmost graph, where the petal we chose to fold now looks a copy of $P_7$.}
       \label{fig:splitmapsecondstep}
   \end{figure}
   
   Moving forward, the $F_6$ map in the proof of \Cref{lem:prototypehe} is a fold involving the last $b$-edge in the edge path corresponding to $\phi_{3+2m}(b) = bD(bC)^{m}b$. The last $b$-edge in $S(f)(b_r)$ is labelled by $b_{j_{m+2}}$. In fact, due to the folds that were already performed, this edge has been identified with the last $b$-edge of $S(f)(g_r)$. Due to the final fold in $F_5$ described in the previous paragraph, this $b_{j_{m+2}}$ edge shares an initial vertex with another $b_{j_{m+2}}$ edge and so we perform the single fold and call it $F_6$ (see \Cref{fig:splitmapsecondstep}).
   Similarly, the final edge of the edge path corresponding to $\phi_{3+2m}(c) = cF(cA)^mc$ is labelled by $c_{j_{m+2}}$ and shares an initial vertex with another $c_{j_{m+2}}$ due to the folds of $F_5$. We perform this single fold, which we call $F_7$.
   Again, we refer the reader to \Cref{fig:splitmapsecondstep} where we assume $j_1 = 1$ and $j_{m+2}=2$ for simplicity.

   After $F_6$ and $F_7$ have been applied, the edge path $c_{j_{m+1}}B_{j_{m+1}}\cdots c_{j_2}B_{j_2}$ starts at the center vertex of the petal, and therefore, we can begin the folds that wrap around the appropriate loops of the form $c_iB_i$, just as we did in the proof of \Cref{lem:prototypehe}. Again, we are using the fact here that all of the $a_{i}$, $b_{i}$, and $c_{i}$ edges begin at the center vertex (see the argument for $F_5$ above). In the same way, we can perform the $F_9$ map once $F_6$ has been applied and wrap the edge path labelled by $a_{j_{m+1}}B_{j_{m+1}}\cdots a_{j_2}B_{j_2}$ around the appropriate $a_iB_i$ loops. Lastly, $F_{10}$ wraps the edge path labelled by $a_{j_{m+1}}C_{j_{m+1}} \cdots a_{j_2}C_{j_2}$ around the appropriate $a_iC_i$ loops, which is possible due to the single fold of $F_7$.

   \begin{figure}[ht!]
       \centering
       \makebox[\textwidth][c]{\includegraphics[width=\textwidth]{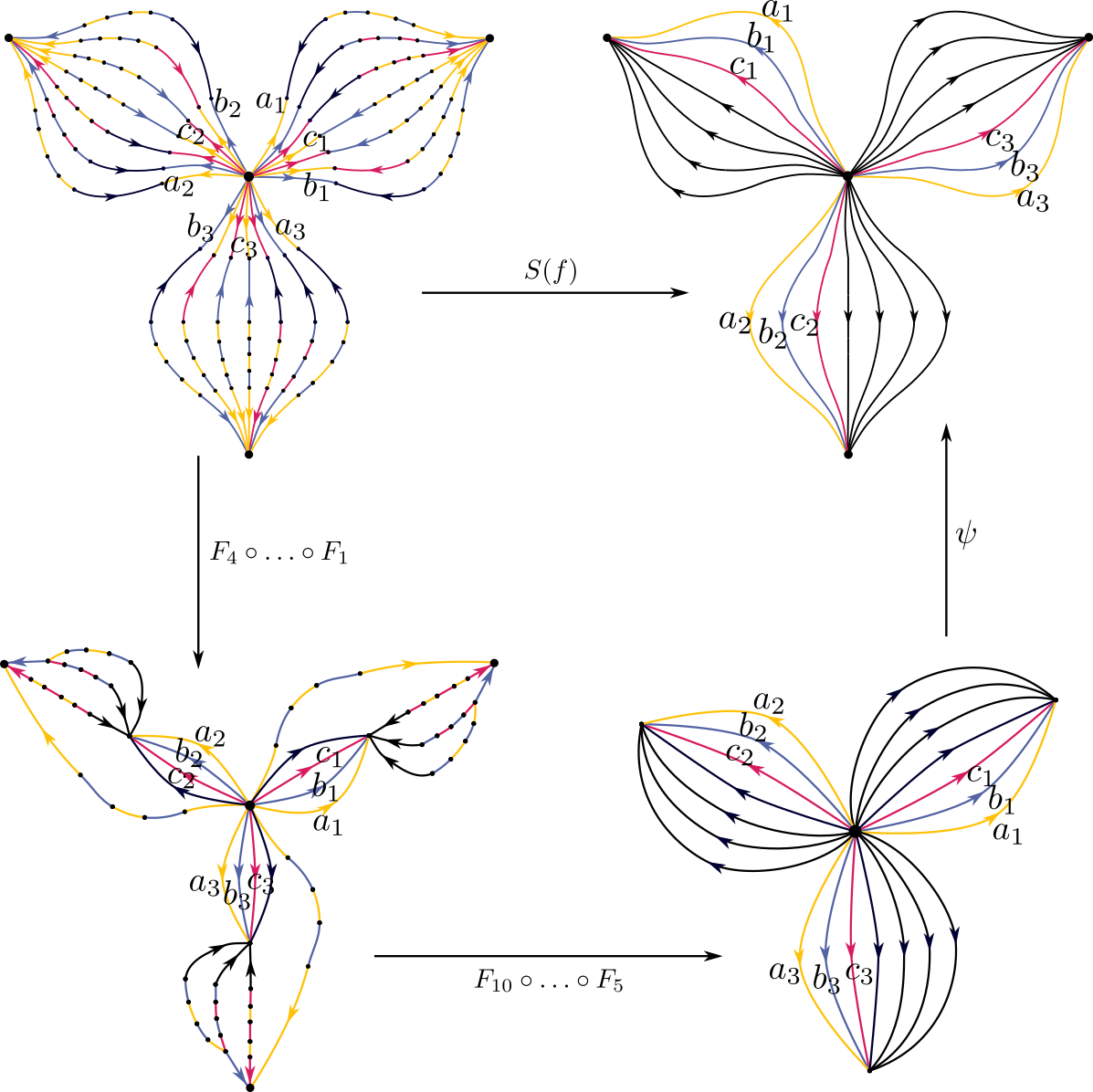}}
       \caption{The decomposition of $S(f)$. Note the immersion $\psi$ is the identity map up to permuting petals or edges within petals.}
       \label{fig:splitmapfinalstep}
   \end{figure}
   
   Finally, we perform the analogous folds $F_5, \ldots, F_{10}$ on each of the other $n-1$ petals in the rightmost graph of \Cref{fig:splitmapsecondstep}. The resulting map $\psi$ of \Cref{thm:stallings} is the identity map on $S(\ast_n)$ up to permuting the petals and permuting edges within a petal (a graph automorphism), which is indeed a homotopy equivalence. Since only folds of Type I were performed in $F_1, \ldots, F_{10}$, we conclude that $S(f)$ is a homotopy equivalence. See \Cref{fig:splitmapfinalstep} for the summary.
    \end{proof}

\section{Conclusion: Proof of Thurston's Theorem}\label{sec:Conclusion}

It remains to show that for any weak Perron or Perron number $\lambda$ that $S(f_\lambda)$ is ergodic. Note that the proof of this fact was missing in Thurston's paper. We first prove the case where $\lambda$ is Perron and then extend to weak Perron numbers.

We retain the notation from \Cref{ssec:AST_Uniform_Expanding} so that $\ast_n =\ast_{mM}$ consists of $m$ families of $M$ tips.
We call each family of $M$ tips a \emph{fan}. We label the tips by $I=(s_k,i)$ where $k \in \{1,\ldots,m\}$ and $i \in \{1,\ldots,M\}$. Also, when $i<M$ for $I=(s_{k},i)$ we write $I+1 = (s_{k},i+1)$. 

\begin{THM} \label{LEM:PerronMixing}
Let $\lambda$ be a Perron number and $f_\lambda$ be the star map constructed in \Cref{thm:astmapPerron}. Then its split map $S(f_\lambda):S(\ast_n) \to S(\ast_n)$ is mixing, i.e., for an edge $y_I$ in $S(\ast_n)$, there exists an $L$ such that $S(f_\lambda)^L(y_I) \supseteq S(\ast_n)$.
\end{THM}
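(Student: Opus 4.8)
The plan is to show that the (unsigned) incidence matrix $B$ of $S(f_\lambda)$ is \emph{primitive}, i.e.\ $B^{L}>0$ for some $L$; since $B^{n}$ faithfully counts edge occurrences (below), this says exactly that $S(f_\lambda)^{L}(y_I)\supseteq S(\ast_n)$ for every starting edge $y_I$, which is the assertion. I would deduce primitivity from the two standard facts that the incidence digraph of $B$ is \emph{strongly connected} and \emph{aperiodic}. Two inputs are used throughout: $S(f_\lambda)$ is a genuine traintrack map (\Cref{prop:splitttmap}), so its iterates involve no backtracking cancellation and $B^{n}$ records exactly the edges occurring in $S(f_\lambda)^{n}(\,\cdot\,)$; and $f_\lambda$ is the star map of \Cref{thm:astmapPerron}, whose incidence matrix is mixing — although the argument below actually leans on the \emph{explicit} recipe \Cref{eq:recipe} for $f_\lambda$ more than on a black-box use of that mixing.

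Write an edge of $S(\ast_n)=S(\ast_{mM})$ as $y_I$ with $y\in\{a,\dots,g\}$ a prototype letter and $I=(s_k,i)$ a tip. Then $S(f_\lambda)$ acts in two modes. If $i<M$, then $f_\lambda$ sends the tip to $(s_k,i+1)$ along a length-one path, so $\phi_1=\id$ is used and $S(f_\lambda)(y_I)=y_{(s_k,i+1)}$ is a pure \emph{shift} preserving the letter. If $i=M$, then $f_\lambda((s_k,M))$ is the explicit edge path of \Cref{eq:recipe}, of odd combinatorial length $\ell\ge 5$, and $S(f_\lambda)(y_{(s_k,M)})=[\phi_\ell(y)]_{f_\lambda((s_k,M))}$ is the word $\phi_\ell(y)$ zipped position-by-position with the subscript word $f_\lambda((s_k,M))$. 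Two features of such a \emph{long step} are decisive. First, every block in the recipe except the trailing single edge $(s_k,n_0+1)$ has even length, so every block begins at an odd position of $f_\lambda((s_k,M))$; since (writing $\ell=3+2m$) $\phi_\ell(a)=aG(aB)^{m}a$, $\phi_\ell(b)=bD(bC)^{m}b$ and $\phi_\ell(c)=cF(cA)^{m}c$ read $a$, $b$, $c$ respectively at \emph{every} odd position, zipping puts the letter $a$ (resp.\ $b$, $c$) at every subscript $(s_j,1)$, $j=1,\dots,m$, and also at $(s_k,2)$ and $(s_k,n_0+1)$. Second, position $2$ of the recipe is $\overline{(s_k,1)}$, where $\phi_\ell(a),\phi_\ell(b),\phi_\ell(c),\phi_\ell(g)$ read $G,D,C,E$; so a long step out of $a_{(s_k,M)}$ (resp.\ $b,c,g$) also reaches $g_{(s_k,1)}$ (resp.\ $d,f,e$ at an index-$1$ tip).

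For \emph{aperiodicity}: position $1$ of $f_\lambda((s_k,M))$ is $(s_k,1)$ and position $\ell-2$ is $(s_k,2)$ (the odd slot just before the trailing $\overline{(s_k,2)},(s_k,n_0+1)$), so zipping with $\phi_\ell(a)$ gives $B$-edges $a_{(s_k,M)}\to a_{(s_k,1)}$ and $a_{(s_k,M)}\to a_{(s_k,2)}$; closing each up by shifts produces cycles through $a_{(s_k,M)}$ of lengths $M$ and $M-1$, and $\gcd(M,M-1)=1$. For \emph{strong connectivity} I would show that every $y_I$ reaches $a_{(s_1,1)}$ and that $a_{(s_1,1)}$ reaches every edge. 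To reach $a_{(s_1,1)}$: shift $y_I$ to an $M$-tip; the first letter of every $\phi_\ell(y)$ is one of $a,b,c$, so a long step lands on $w_{(s_k,1)}$ with $w\in\{a,b,c\}$; if $w\in\{b,c\}$, one or two further (shift, then long) rounds reach an $a$-edge (as $\phi_\ell(c),\phi_\ell(d)$ contain the letter $a$ and $\phi_\ell(b)$ contains $c,d$); and from any $a$-edge, shifting to an $M$-tip and taking a long step reaches $a_{(s_j,1)}$ for \emph{all} $j$ by the first feature, hence $a_{(s_1,1)}$. To reach an arbitrary $z_{(s_j,i)}$ from $a_{(s_1,1)}$: that mechanism makes all $a$-edges mutually reachable (reach every $(s_j,1)$ in one long round, then shift), and likewise all $b$- and all $c$-edges become mutually reachable once any one is hit, since $a,b,c$ all ``self-reproduce'' at $M$-tips; a long step out of an $a$-edge produces some $b$-, some $c$- and some $g$-edge, making all $b,c,g$-edges reachable; and the remaining letters appear by the second feature — $d$ out of a $b$-edge, $f$ out of a $c$-edge, $e$ out of a $g$-edge, always at an index-$1$ tip — after which shifting realizes that letter at every tip. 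Hence $B$ is strongly connected and aperiodic, so primitive: $B^{L}>0$ for some $L$, which is the statement.

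The main obstacle is that the long step $[\phi_\ell(y)]_{f_\lambda((s_k,M))}$ \emph{couples} the letter and the subscript: a single step cannot target a prescribed pair, only pairs that co-occur position-by-position in these two explicit words. The resolution is the one above — decouple the coordinates: steer the subscript with the shift mode (or, for a less combinatorial route, with the mixing of $f_\lambda$'s incidence matrix from \Cref{thm:astmapPerron}) while holding the letter fixed, and walk the letter along the strongly connected ``letter digraph'' on $\{a,\dots,g\}$ using long steps, finishing with one long round that, by the even-block parity of the recipe, spreads the required letter over all the $(s_j,1)$ tips. The genuinely delicate point is exactly that parity bookkeeping inside $f_\lambda((s_k,M))$, which is why the precise shape of the construction in \Cref{thm:astmapPerron} — all repetition-blocks of even length with a single odd tail — is what gets used, rather than merely the fact that $f_\lambda$ is uniformly $\lambda$-expanding.
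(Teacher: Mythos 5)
Your proposal takes a genuinely different route from the paper. The paper proves the mixing statement directly: it fixes an edge $y_I$ and tracks its images under iterates of $S(f_\lambda)$ in five explicit steps, first accumulating all $a$-edges, showing they persist under further iteration, then adjoining the $g$-edges, and finally sweeping in $b,e,c,d,f$. You instead reduce to the abstract criterion that a nonnegative matrix is primitive (mixing) if and only if its digraph is strongly connected and aperiodic, establish aperiodicity by exhibiting cycles through $a_{(s_k,M)}$ of lengths $M$ and $M-1$ (via the positions $1$ and $\ell-2$ of $f_\lambda((s_k,M))$ both zipping with $a$ in $\phi_\ell(a)$), and establish strong connectivity through a hub argument centered at $a_{(s_1,1)}$. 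This framework is cleaner and makes the role of the ``$(s_\ell,2)$ trick'' (which the paper only uses implicitly in Step 2b) transparent as an aperiodicity mechanism; the cost is that you must separately verify aperiodicity rather than getting mixing en route from an explicit covering power. Both arguments ultimately lean on the same combinatorial facts about $f_\lambda((s_k,M))$ and the prototype words, and your ``decouple letter and subscript'' slogan is a good way to see why the construction works.

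There is one factual slip in your strong-connectivity paragraph: you assert that ``a long step out of an $a$-edge produces some $b$-, some $c$- and some $g$-edge.'' But $\phi_\ell(a)=aG(aB)^m a$ involves only the letters $a,g,b$ (up to orientation), not $c$. A $c$-edge is first reached from a $b$-edge, since $\phi_\ell(b)=bD(bC)^m b$ contains $C$ — a chain you in fact already use correctly earlier in the same paragraph when showing every edge reaches $a_{(s_1,1)}$. The letter digraph on $\{a,\dots,g\}$ induced by long steps is $a\to\{a,g,b\}$, $b\to\{b,d,c\}$, $c\to\{c,f,a\}$, $g\to\{b,e,a\}$, $d\to\{a,b\}$, $e\to\{c,b,a\}$, $f\to\{a,c,b\}$; so from $a$ you reach $g,b$ in one step, then $d,c,e$ in two, then $f$ in three. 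With that correction, and with the observation (which you gesture at but should make explicit) that for the non-self-reproducing letters $d,e,f,g$ you obtain $d_{(s_j,1)}$ for \emph{all} $j$ because every $b_{(s_j,M)}$ is reachable (similarly $e$ from all $g_{(s_j,M)}$, $f$ from all $c_{(s_j,M)}$, $g$ from all $a_{(s_j,M)}$) before shifting to all tips, the argument closes and is correct.
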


There are two key aspects of our constructions thus far that will be pivotal in the proof of this lemma: the way the star map $f_\lambda$ constructed in the proof of \Cref{thm:astmapPerron} acts on the edges $(s_k,M)$ of $\ast_n$ (the last tip in each fan), and the way protoype maps act on edges in the prototype graph, with particular focus on the fact that for large $m>0$, $\phi_{3+2m}(a) = aG(aB)^ma$ maps over $aB$ a large number of times. Using these two facts together, we break down the proof of \Cref{LEM:PerronMixing} into the following five steps: 

\begin{itemize}
\item \textbf{Step 1}: We first show that the image of $y_I$ under a large enough power of $S(f_\lambda)$ contains \emph{some} $a$-edge in $S(\ast_n)$.
\item \textbf{Step 2}: Then we show that a bigger power of $S(f_\lambda)$ applied to $y_I$ contains \emph{all} $a$-edges in $S(\ast_n)$.
\item \textbf{Step 3}: Next, we show that any further power of $y_I$ under $S(f_\lambda)$ will \emph{still} contain every $a$-edge in $S(\ast_n)$. 
\item \textbf{Step 4}: We use this to conclude that there is a larger power of $S(f_\lambda)$ so that the image of $y_I$ contains all $a$- {and} $g$-edges in $S(\ast_n)$. 
\item \textbf{Step 5}: We then obtain all of the $b$-, $e$-, $c$-, $d$-, and $f$-edges in $S(\ast_n)$ using further powers of $S(f_\lambda)$, in that order, to obtain the mixing conclusion.
\end{itemize}

\begin{proof}
In what follows, we use $\Gamma$ to denote $\ast_{n}=\ast_{mM}$. We drop the subscript $\lambda$ from $f_\lambda$ and simply call its split $S(f)$.
Take a random edge in $S(\Gamma)$ labelled by $y_I$, where $y \in \{a, b, \ldots, g\}$
and $I =(s_k,i)$ for some $k \in \{1,\ldots,m\}$ and $i \in \{1,\ldots,M\}$. Then, there exists $p\geq 0$ such that $S(f)^p(y_I) = y_K$ where $K = (s_k,M)$. To be precise, we can take $p=M-i$. That is to say, we shift $y_I$ along its fan until it lives on the prototype corresponding to the last edge in this fan. Here we are using the fact that when $i < M$ and $I=(s_k,i)$, then $S(f)$ sends $y_I$ to $y_{I+1}$. We do this so that the next iterate of $S(f)$ applied to $y_I$ involves the application of a non-identity prototype map.

 Recall the prototype maps $\phi_{3+2m}$ for $m\geq 0$ are given by:
\[
    \phi_{3 + 2m} :
    \begin{cases}
    a &\mapsto aG(aB)^ma, \\
    b &\mapsto bD(bC)^mb, \\
    c &\mapsto cF(cA)^mc, \\
    d &\mapsto aB(aB)^ma, \\
    e &\mapsto cB(aB)^ma, \\
    f &\mapsto aC(aB)^ma, \\
    g &\mapsto bE(bA)^mb.
    \end{cases}
\]
\textbf{Step 1:} Since $f((s_k,M))$ always maps over at least 5 edges, we use the prototype maps $\phi_{3+2m}$ with $m \ge 1$ to construct the split map $S(f)$ from $f$. In addition, the image of an edge under such prototypes will always contain a portion of the form $(wZ)^m$ where $w,z \in \{a,b,c\}$ are distinct. Recall that $f((s_k,M))$ in $\Gamma$ maps over $(s_k,1)$ an even number of times, then maps over $(s_\ell,1)$ for all $\ell \neq k$ an even number of times in some order, then maps over $(s_k,2)$ exactly twice, and finishes by mapping over $(s_{k},n_0 + 1)$ exactly once. Therefore, the image of an edge in $S(\Gamma)$ under $S(f)$ will map over the $w$- and $z$-edges of the prototypes corresponding to $(s_\ell,1)$ for all $\ell \neq k$ and $(s_k,2)$.

From this, we conclude that, so long as $y \neq b$, $S(f)^{p+1}(y_I)$ contains $a_J$ for some $J$; in fact, this image contains several $a$-edges. This is due to the fact that the image of every other edge besides $b$ in the prototype contains word of the form $(wZ)^m$ where either $w$ or $z$ is $a$. If $y = b$, then $\phi_{3+2m}(b) = bD(bC)^{m}b$, so that $S(f)^{p+1}(y_I)$ contains $d_{(s_k,1)}$. Therefore, $S(f)^{p+2}(y_I)$ contains $d_{(s_k,2)}$, $S(f)^{p+M}(y_I)$ contains $d_{(s_k,M)}$, and $S(f)^{p+M+1}(y_I)$ contains $a_J$ for $J=(s_k,1)$ since $\phi_{3 +2m}(d)= aB(aB)^ma$.

\textbf{Step 2a:} However, we claim that since there is a power of $S(f)$ so that the image of $y_I$ contains $a_{J_0}$ for \emph{some} $J_0$, then there is a further power, call it $S(f)^q$, so that the image of $y_I$ contains $a_J$ for {all} $J = (s_j,1)$, where $j=1,\ldots,m$. Note that this is still a subset of the set of \emph{all} $a$-edges in $S(\ast_n)$. Without loss of generality, assume $J_0= (s_\ell,M)$ for some $\ell$ (making the second component $M$ can be achieved by applying a few more iterates of $S(f)$). Then, the claim follows from the fact that $\phi_{3+2m}(a) = {a}G(aB)^ma$ so that $S(f)(a_{J_0})$ first maps over $a_JG_J$ for $J = (s_\ell,1)$ and then maps over $a_JB_J$ for $J = (s_j,1)$ for all $j \neq \ell$. In fact, after mapping over $a_JB_J$ for $J = (s_j,1)$ and $j \neq \ell$, $S(f)(a_{J_0})$ then maps over $a_JB_J$ for $J=({s_{\ell}},2)$. The fact that the image contains $a_{({s_{\ell}},2)}$ is essential for showing that the map is mixing and not just ergodic as we will now see.

\begin{figure}[ht!]
    \centering
    \makebox[\textwidth][c]{
        \includegraphics[width=\textwidth]{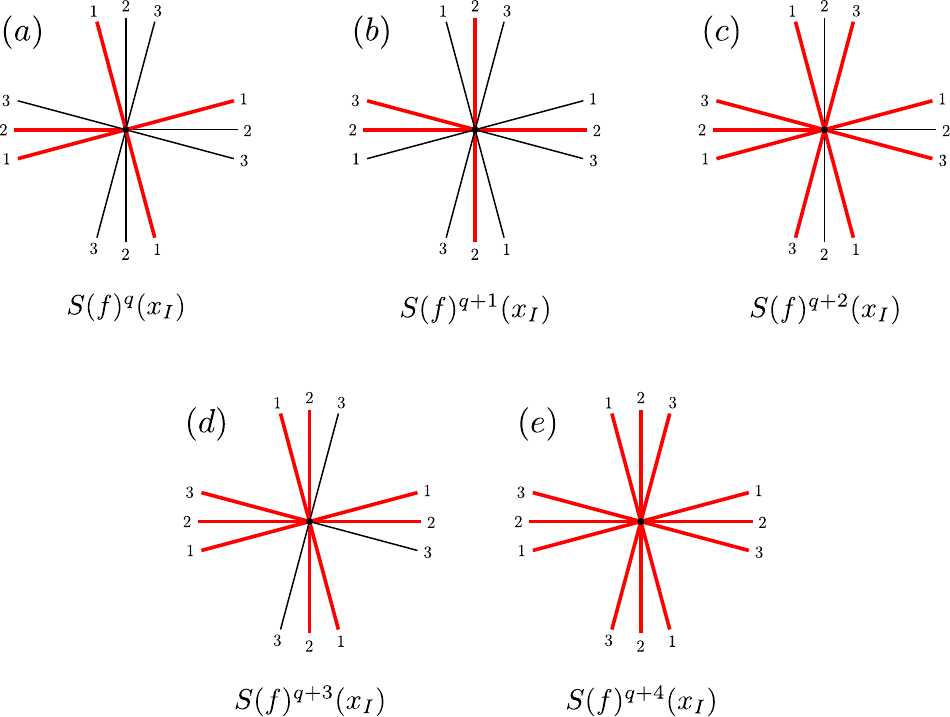}
    }
    \caption{$M=3,m=4$. The $K$-th edge in $\G$ is thickened and colored in red when $S(f)^n(y_I)$ contains $a_K$. The figure illustrates how the subsequent powers of $S(f)^q$ will map $y_I$ over \emph{all} edges of the form $a_{J}$.
    }
    \label{fig:SplitMapMixing1}
\end{figure}

 \textbf{Step 2b:} Given the fact that $S(f)^q(y_I)$ contains $a_J$ for all $J = (s_j , 1)$ and $J=(s_\ell, 2)$ for some $\ell$, we now show that another further power of $S(f)$ applied to $y_I$ contains \emph{all} $a$-edges of $S(\ast_n)$.  We outline the argument in the bullet points below for ease of readability. 
\begin{itemize}
\item First, $S(f)^{q+1}(y_I)$ contains $a_J$ where $J = (s_j,2)$ for all $j$ and for $J = (s_\ell,3)$. 
\item Then, $S(f)^{q+M-2}(y_I)$ contains $a_J$ where $J = (s_j,M-1)$ for all $j$ and for $J = (s_\ell,M)$. 
\item In the next iterate of $S(f)$, $a_{(s_j,M-1)}$ maps to $a_{(s_j,M)}$. Additionally, $a_{(s_\ell, M)}$ maps over $a_J$ where $J = (s_j,1)$ for all $j$ and for $J = (s_\ell,2)$. In summary, $S(f)^{q+M-1}(y_I)$ contains $a_J$ where $J = (s_j,1), (s_j,M)$ for all $j$ and for $J = (s_\ell,2)$. See \Cref{fig:SplitMapMixing1}$(c)$.
\item Then, $S(f)^{q+M}(y_{I})$ contains $a_{J}$ where $J=(s_{j},1)$ and $(s_{j},2)$ for all $j$ and for $J=(s_{\ell},3)$. See \Cref{fig:SplitMapMixing1}$(d)$.
\item Applying $S(f)^M$ once more, we see that $S(f)^{q+2M}(y_I)$ contains all $a_J$ where $J=(s_j,1)$, $(s_j,2)$, $(s_j,3)$, for all $j$ and for $J=(s_\ell,4)$.
\item Continuing in this fashion, $S(f)^{q+(M-2)M}(y_I)$ contains all $a_J$ where $J=(s_j,1),\ldots,(s_j,M-1)$ for all $j$ and for $J=(s_\ell,M)$.
\item  Since $S(f)(a_{(s_\ell,M)})$ contains all $a_J$ of the form $J=(s_j,1)$, it follows that $S(f)^r(y_I)$, for $r=q+(M-2)M+1$, contains $a_J$ for all $J$.
\end{itemize}

Thus, we have that there exists $r$ such that $S(f)^r(y_I)$ contains $a_J$ for all $J$ concluding Step 2.

\textbf{Step 3:} We claim that any further power of $y_I$ under $S(f)$  will
still contain every $a$-edge in $S(\ast_n)$. In particular, $S(f)(a_J) = a_{J+1}$ for all $J=(s_j,i)$ when $i < M$, and $S(f)(a_{(s_\ell,M)})$ contains all $a_{(s_j,1)}$. In this way, we do not lose any $a$-edges in $S(f)^r(y_I)  \subset S(\ast_n)$ by applying further powers of $S(f)$. See \Cref{fig:SplitMapMixing2}.

\begin{figure}[ht!]
    \centering
    \makebox[\textwidth][c]{
        \includegraphics[width=.6\textwidth]{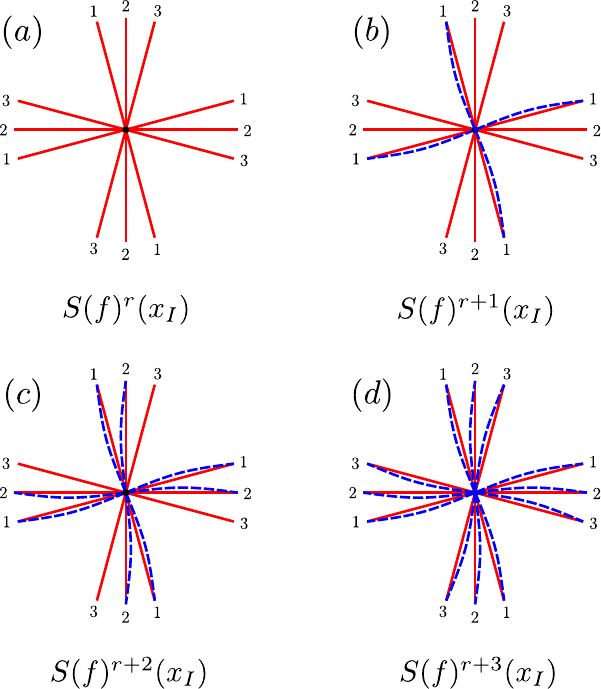}
    }
    \caption{Red(with non-dashed) and blue(with dashed) edges in $S(\G)$ are $a$- and $g$-edges in the image of $y_I$ via corresponding power of $S(f)$ written below each graph. The figure illustrates how a subsequent power of $S(f)^r$ will map $y_I$ over \emph{all} edges of the form $a_{J}$ and $g_{J}$. Note the red edges persist throughout the process.}
    \label{fig:SplitMapMixing2}
\end{figure}

\textbf{Step 4:} Additionally, we claim that for each $j$, $S(f)(a_{(s_j,M)})$ also contains $g_{(s_{j},1)}$. This follows from the fact that the first two letters in the image of $a$ under any prototype map (except $\phi_1$; the identity) are $aG$ and the fact that $f((s_j,M))$ first maps over $(s_j,1)$ at least twice. Therefore, $S(f)^{r+1}(y_{I})$ contains $g_{J}$ for all $J=(s_{j},1)$ (See \Cref{fig:SplitMapMixing2}$(b)$) and $S(f)^{r+2}(y_I)$ contains $g_J$ for all $J = (s_j,1), (s_j,2)$ since $g_{(s_j,1)}$ maps to $g_{(s_j,2)}$ and $a_{(s_j,M)}$ maps over $g_{(s_j,1)}$. See \Cref{fig:SplitMapMixing2}$(c)$. Finally, $S(f)^{r+M}(y_I)$ contains $a_J$ and $g_J$ for all $J$. 

\textbf{Step 5:} We continue this procedure to obtain, in order, every $b_J$ and $e_J$, then $c_J$ and $d_J$, and finally $f_J$ edge in $S(\ast_n)$. This order is determined by the first two letters in the images of $\{a, b, \ldots, g\}$ under the prototype maps. Therefore, there exists an $L$ such that $f^L(y_I)$ contains all of $S(\ast_n)$.  
\end{proof}

Proceeding to the case of weak Perron numbers $\lambda$, we have:

\begin{THM} \label{thm:WeakPerronErgodic}
Let $\lambda$ be a weak Perron number and $f_\lambda$ be the star map constructed in \Cref{lem:WeakPerron_Expanding_Ast}. Then its split map $S(f_\lambda):S(\ast_n) \to S(\ast_n)$ is ergodic, i.e., for each pair of edges $y_I,z_J$ in $S(\ast_{k})$, there exists an $L$ such that $S(f_\lambda)^L(y_I) \supset z_J$.
\end{THM}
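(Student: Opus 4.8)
The plan is to derive \Cref{thm:WeakPerronErgodic} from the Perron case, \Cref{LEM:PerronMixing}, by exploiting the block structure of the weak Perron star map. Write $\mu = \lambda^N$ for the Perron power given by \Cref{prop:PnWP}, and let $g \colon \ast_n \to \ast_n$ be the $\mu$-uniform Perron star map of \Cref{thm:astmapPerron}. Recall from the proof of \Cref{lem:WeakPerron_Expanding_Ast} that $f_\lambda$ is the map $f_N$ living on $\ast_k = \ast_{Nn}$, obtained by gluing $N$ copies $C^0, \ldots, C^{N-1}$ of $\ast_n$ along their centers: for $0 \le i \le N-2$ the map $f_\lambda$ shifts $C^i$ onto $C^{i+1}$, and on $C^{N-1}$ it applies $g$ and then shifts the resulting edge path back into $C^0$. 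Since each edge of $C^0, \ldots, C^{N-2}$ is sent to a single edge, the split $S(f_\lambda)$ uses $\phi_1 = \id$ on these blocks, so $S(f_\lambda)$ restricts to a graph isomorphism from $S(C^i)$ onto $S(C^{i+1})$ for $i \le N-2$ (a pure relabeling of subscripts), while on $S(C^{N-1})$ it carries out exactly the prototype-map recipe defining $S(g)$ and lands in $S(C^0)$.

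The first observation I would record is that, under the identification $S(C^0) \cong S(\ast_n)$, the restriction of $S(f_\lambda)^N$ to $S(C^0)$ is literally equal to $S(g)$: an edge of $S(C^0)$ is relabeled through $S(C^1), \ldots, S(C^{N-1})$ in $N-1$ steps, and the $N$-th application of $S(f_\lambda)$ performs the $S(g)$ recipe and returns to $S(C^0)$. Since $\mu$ is Perron, \Cref{LEM:PerronMixing} says $S(g)$ is mixing, so for \emph{every} edge $w$ of $S(\ast_n)$ there is $c = c(w)$ with $S(g)^c(w) \supseteq S(\ast_n)$, whence $S(f_\lambda)^{Nc}(w) \supseteq S(C^0)$.

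To assemble the proof, fix edges $y_I, z_J$ of $S(\ast_k)$. First, a bounded power pushes $y_I$ into the first block: if $y_I$ lies in $S(C^i)$, then $S(f_\lambda)^a(y_I)$ contains an edge $w$ of $S(C^0)$ for $a = 0$ when $i = 0$, and for $a = N - i$ when $i \ge 1$, since after $N-1-i$ relabeling shifts we reach $S(C^{N-1})$ and one more application deposits a nonempty edge path into $S(C^0)$. Next, with $c = c(w)$ as above, $S(f_\lambda)^{a + Nc}(y_I) \supseteq S(C^0)$. Finally, if $z_J$ lies in $S(C^{i'})$ with $0 \le i' \le N-1$, let $z_{J_0}$ be the unique edge of $S(C^0)$ carried to $z_J$ by the $i'$ relabeling shifts, so $S(f_\lambda)^{i'}(z_{J_0}) = z_J$; since $S(f_\lambda)^{a+Nc}(y_I) \supseteq S(C^0) \ni z_{J_0}$, applying $S(f_\lambda)^{i'}$ once more gives $S(f_\lambda)^{a + Nc + i'}(y_I) \supseteq z_J$. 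Setting $L = a + Nc + i'$ finishes the argument.

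The main obstacle is the bookkeeping behind the identity ``the restriction of $S(f_\lambda)^N$ to $S(C^0)$ equals $S(g)$'': one must check that the subscript relabelings implementing the block shifts match the identification $S(C^0) \cong S(\ast_n)$ and the way the prototype-map recipe assigns subscripts, so that the two maps agree on the nose rather than merely up to conjugacy. It is also worth flagging --- and it is consistent with the weaker conclusion stated here --- that for $N > 1$ one cannot expect mixing: $S(f_\lambda)$ cyclically permutes the blocks $S(C^0), \ldots, S(C^{N-1})$, so its transition matrix is block-cyclic of period dividing $N$ and no power of it is positive. This block-cyclicity is exactly the gap between \Cref{LEM:PerronMixing} and \Cref{thm:WeakPerronErgodic}.
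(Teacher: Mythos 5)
Your proposal is correct and follows essentially the same route as the paper's proof: both decompose $S(\ast_{Nn})$ into the fans $S(C^0),\ldots,S(C^{N-1})$, use that $S(f_\lambda)$ is a pure relabeling $S(C^i)\to S(C^{i+1})$ for $i<N-1$ (so $S(f_\lambda)^N$ restricted to $S(C^0)$ is $S(f_\mu)$), invoke the mixing of $S(f_\mu)$ from \Cref{LEM:PerronMixing}, and then chain a block-shift into $C^0$, a multiple of $N$ applications to cover $S(C^0)$, and a final shift to reach $z_J$. The only cosmetic difference is that you isolate a single edge $w\in S(C^0)$ and use the mixing constant $c(w)$, whereas the paper tracks the whole image path $[S(f_\mu)(y_j)]^0$ and uses a power $p$ with $S(f_\mu)^p(y_j)\supset z_{j'}$ directly; your closing observation that $S(f_\lambda)$ cannot be mixing because of the block-cyclic structure matches the paper's remark following the theorem.
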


\begin{proof}
By \Cref{prop:PnWP}, there is an integer $N$ such that $\lambda^N$ is Perron. We use $\mu$ to denote $\lambda^N$ in the remainder of this proof for notational simplicity. Let $f_{\mu}:\ast_{n} \to \ast_{n}$ be the uniformly $\mu$-expanding star map from \Cref{thm:astmapPerron} and $f_\lambda:\ast_{Nn} \to \ast_{Nn}$ be the uniformly $\lambda$-expanding star map constructed using $f_\mu$ as in the proof of \Cref{lem:WeakPerron_Expanding_Ast}.

Label the edges of $\ast_n$ as $j \in \{1,\ldots,n\}$, and the edges of  $\ast_{Nn}$ as $(i,j)$, where $i \in \{0,\ldots,N-1\}$ and $j \in \{1,\ldots,n\}$. Then label the edges of $S(\ast_n)$ as $y_j$ and label the edges of $S(\ast_{Nn})$ as $y^i_j$'s, where $y \in \{a ,b, \ldots, g\}$, $i \in \{0,\ldots,N-1\}$ and $j \in \{1,\ldots,n\}$.

For each $i=0,\ldots,N-1$, define $C^i$ as the subgraph of $S(\ast_{Nn})$ consisting of the edges of the form $y^i_j$, which is homotopy equivalent to $S(\ast_n)$. Call $C^0,\ldots,C^{N-1}$ the \emph{fans} of $S(\ast_{Nn})$. Additionally, for a path $\gamma$ in $S(\ast_n)$, we use $[\gamma]^i$ to denote the copy of this path in the $i$-th fan $C^i$ of $S(\ast_{Nn})$.

Recall that $S(f_\mu)$ is mixing (by \Cref{LEM:PerronMixing}) and that $f_\lambda$ is constructed to simply shift $C^i$ to $C^{i+1}$ for all $i < N-1$ and constructed to apply $f_\mu$ to $C^{N-1}$ followed by shifting it to $C^{0}$.  We claim that $S(f_\lambda)$ is ergodic.

Now pick two edges $y^i_{j}$ and $z^{i'}_{j'}$ in $S(\ast_{Nn})$. Consider the two corresponding edges $y_j$ and $z_{j'}$ of $S(\ast_n)$. By the fact that $S(f_{\mu})$ is mixing (and therefore ergodic), there exists $p$ such that $S(f_{\mu})^p(y_j) \supset z_{j'}$. We will show that
\[
  S(f_{\lambda})^{Np-i+i'}(y_j^i)\supset z_{j'}^{i'}.
\]
The following diagram breaks down how the power $Np-i+i'$ of $S(f_\lambda)$ is obtained, where the numbers over the arrows denote the power of $S(f_\lambda)$ being applied. We will carefully explain each arrow in what follows:
\[
    y_j^i \quad \overset{N-i}{\longmapsto} \quad
    [S(f_\mu)(y_j)]^0 \quad \overset{N(p-1)}{\longmapsto} \quad
    [S(f_\mu)^p(y_j)]^0 \quad \overset{i'}{\longmapsto} \quad
    [S(f_\mu)^p(y_j)]^{i'} \supset z_{j'}^{i'}
\]

By the construction of $f_\lambda$ from $f_{\mu}$, $S(f_\lambda)$ simply translates $C^i$ to $C^{i+1}$ when $i < N-1$. Thus, we first move $y_j^i$ to $y_j^{N-1}$ using $S(f_\lambda)^{N-1-i}$. Applying $S(f_\lambda)$ one more time brings $y_j^{N-1}$ to $[S(f_\mu)(y_j)]^0$. This is due to the fact that $S(f_\lambda)$ amounts to applying $S(f_\mu)$ to $C^{N-1}$ and then translating the image to $C^0$. In summary, $S(f_\lambda)^{N-1 -i + 1}(y_j^i) = S(f_\lambda)^{N-i}(y_j^i) = [S(f_\mu)(y_j)]^0 \subset C^0$.

Next, notice that applying $S(f_\lambda)^N$ to this path amounts to applying $S(f_\mu)$ one time to $C^0$. Thus, $$S(f_\lambda)^{N-i+N(p-1)}(y_j^i) = S(f_\lambda)^{Np-i}(y_j^i) = [S(f_\mu)^p(y_j)]^0.$$ Moreover, $[S(f_\mu)^p(y_j)]^0 \supset z_{j'}^0$ given the fact that $S(f_\mu)^p(y_j) \supset z_{j'}$. Lastly, we apply $S(f_\lambda)^{i'}$ to bring $z_{j'}^0$ to $ z_{j'}^{i'}$. Thus, $S(f_\lambda)^{Np - i + i'} (y_j^i) \supset z_{j'}^{i'}$, proving that $S(f_\lambda)$ is ergodic.
\end{proof}

\begin{RMK} The map $S(f_\lambda)$ from
the construction in the proof of 
\Cref{thm:WeakPerronErgodic} is \emph{not} mixing. Indeed, $S(f_{\lambda})^k(y^{i}_{j})$ is
completely contained in $C^{m}$ where 
$m \equiv i+k$ modulo $N$.
\end{RMK}

We conclude the paper with the proof of Thurston's main theorem  \Cref{thm:thurston_main}.

\begin{proof}[Proof of \Cref{thm:thurston_main}]
    The forward direction is covered in the last paragraph of \Cref{ssec:BG_PF_Theorem}. That is, if $h$ is the topological entropy of an ergodic traintrack representative of an outer automorphism of a free group, then $e^h$ is a weak Perron number.
    
    Conversely, let $\lambda$ be a weak Perron number. Then using \Cref{lem:WeakPerron_Expanding_Ast} we find a uniformly $\lambda$-expanding star map $f_\lambda:\ast_k \to \ast_k$ for some $k$. Splitting $f_\lambda$ we obtain our desired map $S(f_\lambda):S(\ast_k)\to S(\ast_k)$. This is a traintrack map by \Cref{prop:splitttmap} with respect to the traintrack structure given in \Cref{ssec:SPL_Graphs}, and is indeed a topological representative of an outer automorphism by \Cref{prop:splithe}. Also, $S(f_\lambda)$ is still uniformly $\lambda$-expanding by \Cref{prop:splitexpanding}, and is ergodic by \Cref{thm:WeakPerronErgodic}. Finally, by \Cref{prop:entropyOfUniformMaps} the topological entropy of $S(f_\lambda)$ is indeed $\log \lambda$, concluding the proof.
\end{proof}
\bibliography{bib}
\bibliographystyle{plain}
\end{document}